\newtheorem{theorem}{Theorem}[section]
\newtheorem{definition}{Definition}[section]
\newtheorem{lemma}{Lemma}[section]
\newtheorem{problem}{Problem}[section]
\newtheorem{proposition}{Proposition}[section]
\numberwithin{figure}{section}
\numberwithin{equation}{section}
\newcommand{\dd}{{\rm d}}
\begin{document}
\title[Global Solutions of a $2$-D Riemann Problem]
{Global Solutions of \\ a Two-Dimensional Riemann Problem \\ for the Pressure Gradient System}

\author[G.-Q. Chen]{Gui-Qiang G. Chen}
\address
{Gui-Qiang G. Chen, Mathematical Institute, University of Oxford,
Oxford OX2 6GG, UK;
School of Mathematical Sciences, Fudan University, Shanghai200433, China;
AMSS,
Chinese Academy of Sciences, Beijing 100190, China}
\email{chengq@maths.ox.ac.uk}

\author[Q. Wang]{Qin Wang}
\address
{Qin Wang, Department of Mathematics, Yunnan University, Kunming, 650091, China}
\email{mathwq@ynu.edu.cn}

\author[S. Zhu]{Shengguo Zhu}
\address
{Shengguo Zhu, School of Mathematical Sciences,  Shanghai Jiao Tong University, Shanghai 200240, China}
\email{zhushengguo@sjtu.edu.cn}

\dedicatory{{\it Dedicated to Professor Shuxing Chen on the occasion of his 80th birthday}}

\date{}
\keywords{Pressure gradient system, $2$-D Riemann problems, Euler equations, hyperbolic conservation laws,
mixed type, degenerate elliptic equations, shock waves, transonic shock, vortex sheets, free boundary problem}
\subjclass[2010]{ 35L65; 35M10; 35M12; 35R35; 35B36; 35L67; 76L05; 76N10; 35D30; 35J67; 76G25}

\begin{abstract}
We are concerned with a two-dimensional ($2$-D) Riemann problem for compressible flows
modeled by the pressure gradient system that is a $2$-D hyperbolic system of conservation laws.
The Riemann initial data consist of four constant states in four sectorial regions such that
two shock waves and two vortex sheets are generated between the adjacent states.
This Riemann problem can be reduced to a boundary value problem in the self-similar coordinates with
asymptotic boundary data consisting of the two shocks, the two vortex sheets, and the four constant states,
along with two sonic circles determined by
the Riemann initial data, for a nonlinear system of mixed-composite type.
The solutions keep the four constant states and four planar waves outside the outer sonic circle in the self-similar
coordinates.
In particular, the two shocks keep planar until they meet the outer sonic circle at two different points
and then generate a diffracted shock to be expected to connect these two points,
whose exact location is {\it apriori} unknown which is regarded as a free boundary.
Then the $2$-D Riemann problem can be reformulated as a free boundary problem,
in which the diffracted transonic shock is the one-phase free boundary
to connect the two points, while the other part of the outer sonic circle forms
the part of the fixed boundary of the problem.
We establish the global existence of a solution of the free boundary problem,
as well as the $C^{0,1}$--regularity of both the diffracted shock across the two points and
the solution across the outer sonic boundary which is optimal.
One of the key observations here is that the diffracted transonic shock can not intersect
with the inner sonic circle in the self-similar coordinates.
As a result, this $2$-D Riemann problem is solved globally, whose solution contains two vortex sheets and
one global $2$-D shock connecting the two original shocks generated by the Riemann data.
\end{abstract}

\maketitle
\section{Introduction}
The two-dimensional (2-D) full Euler equations are of the conservation form:
\begin{equation}
\label{euler1}
\bm{U}_t+\text{div}_{\rm{x}}\bm{F}=0 \qquad\,\,\mbox{for $t\geq 0\,$ and $\,{\rm{x}}=(x_1,x_2)\in \mathbb{R}^2$},
\end{equation}
with
$$
\bm{U}:=(\rho,\rho \bm{u},\rho E),\qquad \bm{F}:=(\rho \bm{u},\rho\bm{u}\otimes \bm{u}+pI, (\rho E+p)\bm{u}),
$$
where  $\rho>0$ is the density, $\bm{u}=(u,v)$ the velocity, $p$ the pressure,
and
$$
E=\frac{|\bm{u}|^2}{2}+e
$$
represents the total energy per unit mass with the internal energy $e$ given by
$e=\frac{p}{(\gamma-1)\rho}$ for the adiabatic constant $\gamma>1$ for polytropic gases.

There are two mechanisms in the fluid motion: inertia and pressure difference.
Corresponding to a separation of these two mechanisms,
a natural flux-splitting of $\bm{F}$ is to divide it into
two parts:
$\bm{F}=\bm{F}_1+\bm{F}_2$ with
$$
\bm{F}_1:=(\rho \bm{u},\rho\bm{u}\otimes \bm{u}, \rho E \bm{u} ),\qquad \bm{F}_2:=(0,\,pI_{2\times 2},\, p\bm{u}),
$$
where $I_{2\times 2}$ is the diagonal identity matrix.
Correspondingly,  the Euler equations \eqref{euler1} can be split into two subsystems of conservation laws:
$$
\bm{U}_t+\text{div}\bm{F}_1=0, \qquad\,\,\,\, \bm{U}_t+\text{div}\bm{F}_2=0,
$$
which are called  the pressureless Euler system and the pressure gradient system, respectively;
also see \cite{li1985second,agarwal1994modified}.
Similar flux-splitting ideas have been widely used in order to design the so-called flux-splitting
schemes and their high-order accurate extensions. Many flux-splittings
have been derived in the literature for the compressible Euler equations of gas
dynamics and are currently used in fluid dynamics codes.
See \cite{ChenLeFloch,li1985second,agarwal1994modified} and the references cited therein.

In this paper, we focus on the pressure gradient system that is corresponding to flux $\bm{F}_2$.
The explicit form for the pressure gradient system is
\begin{equation}\label{euler2}
\begin{cases}
\rho_t=0,\\[2pt]
(\rho \bm{u})_t+\nabla_{\rm{x}} p=0,\\[2pt]
(\rho E)_t+\text{div}_{\rm{x}}(p \bm{u})=0.
\end{cases}
\end{equation}

By a suitable scaling in \eqref{euler2} and taking $\rho\equiv 1$,
the pressure gradient system is of the following form:
\begin{equation}\label{pgs}
\begin{cases}
u_t+p_{x_1}=0,\\[2pt]
v_t+p_{x_2}=0,\\[2pt]
E_t+(pu)_{x_1}+(pv)_{x_2}=0,
\end{cases}
\end{equation}
where  $E=\frac{|\bm{u}|^2}{2}+p$.
Furthermore, system \eqref{pgs} can be also deduced from the physical validity when the velocity
is small and the adiabatic gas constant $\gamma$ is large; see Zheng \cite{yuxi1997existence}.
An asymptotic derivation of system \eqref{pgs} has been also presented by Hunter as described in \cite{zheng2006two}.
We refer the reader to \cite{li1998two,zheng2012systems} for further background on system \eqref{pgs}.
Besides the pressure gradient system,
there are also several other important nonlinear partial differential equations (PDEs)
derived from the full Euler equations,
such as the potential flow equation that has been widely used in aerodynamics,
as well as the nonlinear wave system,
the unsteady transonic small disturbance equations,
and the pressureless Euler system as mentioned above;
see
\cite{chen2010global,chen2014shock,chen2018mathematics,kim2010global,vcanic2000free,keyfitz1998riemann}
and the references cited therein.
The analysis of these nonlinear PDEs has motivated and inspired the developments of
new techniques and ideas to deal with the corresponding problems
for the Euler equations.

The Riemann problem was first introduced by B. Riemann in 1860 in his pioneering work \cite{Riemann}
to analyze discontinuous solutions of the 1-D Euler equations for gas dynamics.
It is an initial value problem with the simplest discontinuous initial data,
which are scaling invariant and piecewise constant.
The Riemann solutions have played a fundamental role
in the mathematical theory of 1-D hyperbolic systems of conservation laws; see \cite{chang1989riemann,chen2018mathematics,dafermos2000,lax1957hyperbolic,glimm1965,smoller1994}
and the references cited therein.
The 2-D Riemann problem is substantially different and much more complicated than the 1-D case;
see \cite{ChangChenYang,chang1989riemann,SXChen1992,SXChen1997,Chen-Qu2012,li1998two,zheng2012systems}
and the references cited therein.

One of the prototypical 2-D Riemann problems is that the Riemann initial data consist of four different
constant states in the four quadrants so that there is only one wave that is generated between two adjacent states.
Each wave between any two adjacent states is of one of at least three types
of planar waves: shock wave, rarefaction wave, and vortex sheet.
Then the 2-D Riemann problem is to analyze the different combinations/interactions of these four waves
in a domain containing the origin.
The solutions of such a $2$-D Riemann problem for system \eqref{pgs} were analyzed
via the characteristic method and the corresponding numerical simulations were presented
in Zhang-Li-Zhang \cite{zhang1998two}.
It has been observed that the mathematical structure of the pressure gradient system is
strikingly in agreement to that of the Euler equations.
In \cite{li1998two,zhang1998two}, it was shown that there are twelve genuinely different
cases, besides three trivial cases, for the solutions of the $2$-D Riemann problem for the pressure gradient system.
To our knowledge, there have been few rigorous mathematical results on the global existence for
the non-trivial cases,
owing to lack of effective techniques for handling several main difficulties in
the analysis of nonlinear PDEs
such as equations of mixed elliptic-hyperbolic type, free boundary problems,
and corner singularity.

\begin{figure}[htbp]
\vspace{-0.3cm}
\includegraphics[scale=1.25]{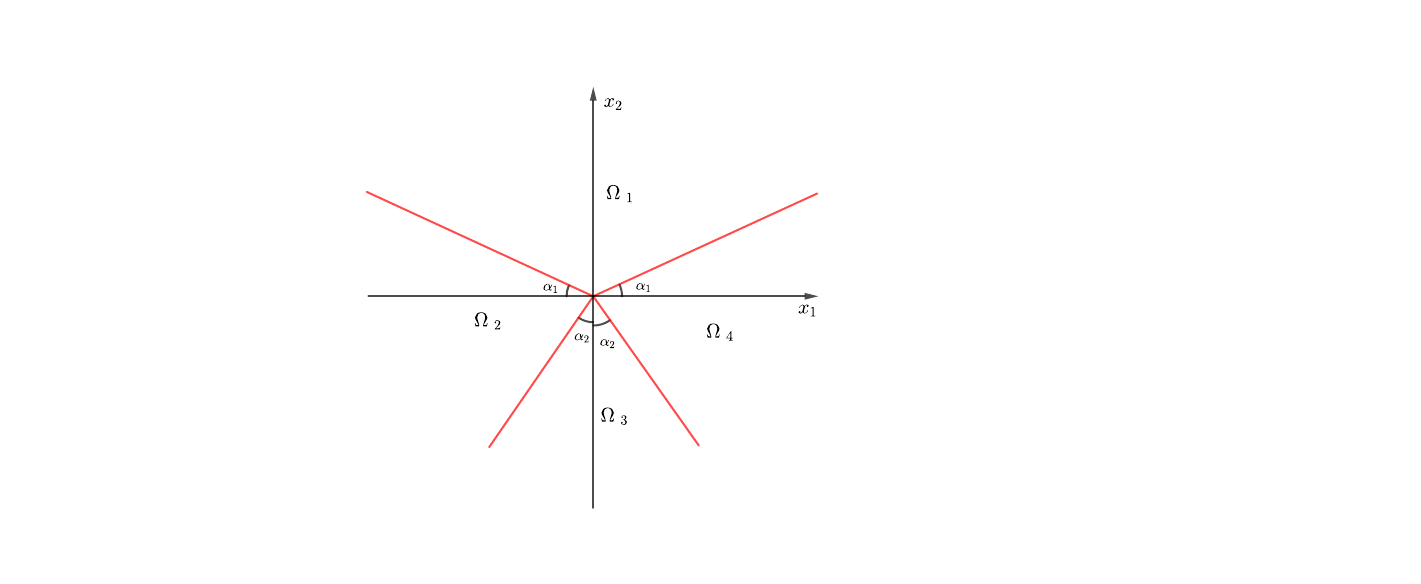}
\setlength{\abovecaptionskip}{-1.0cm}
\caption{The general Riemann initial data}\label{figure1}
\end{figure}

In this paper, we consider a more general Riemann problem
for system \eqref{pgs}, whose initial data consists of four constant states in four sectorial regions
$\Omega_i$  with symmetric sectorial angles (see Fig. \ref{figure1}):
\begin{equation}\label{initialdata}
(p,u,v)(0,x_1,x_2)=(p_i,u_i,v_i) \qquad\,\, \mbox{for $(x_1,x_2)\in \Omega_i, \, i=1,2,3,4$}.
\end{equation}
One of our motivations is to understand
the intersections of two shock waves and two vortex sheets.
For this purpose, the four initial constant states are required to satisfy the following conditions:
\begin{equation}\label{initialdate1}
\begin{cases}
\text{A forward shock $S_{41}^{+}$ is formed between states $(1)$ and $(4)$},\\[2pt]
\text{A backward shock $S_{12}^{-}$ is formed between states $(1)$ and $(2)$},\\[2pt]
\text{A vortex sheet $J_{23}^{+}$ is formed between states $(2)$ and $(3)$},\\[2pt]
\text{A vortex sheet $J_{34}^{-}$ is formed between states $(3)$ and $(4)$}.
\end{cases}
\end{equation}
These four waves can be obtained by solving four 1-D Riemann problems in the self-similar
coordinates $(\xi,\eta)=(\frac{x_1}{t},\frac{x_2}{t})$, which form the configuration
as shown in Fig. \ref{figure2}.

\vspace{-15pt}
\begin{figure}[htbp]
\vspace{-0.5cm}
\includegraphics[scale=1]{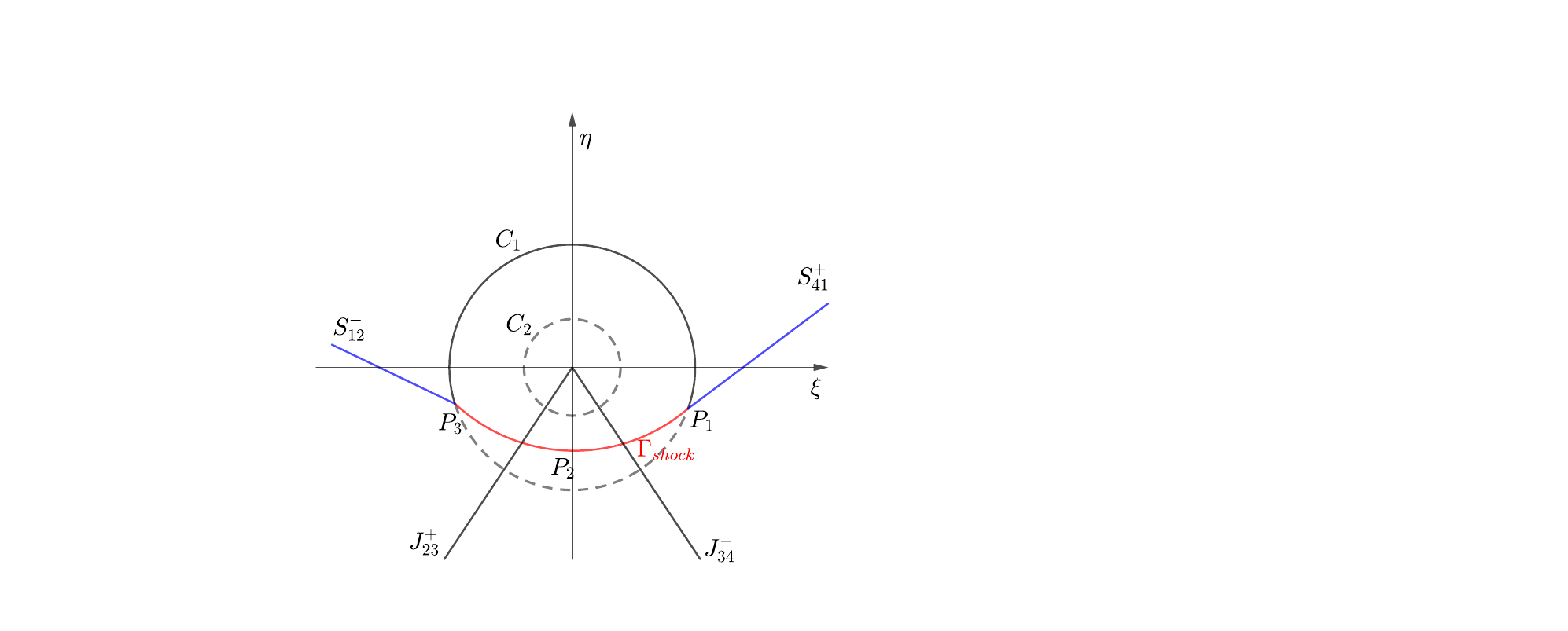}
\setlength{\abovecaptionskip}{-1.0cm}
\caption{The configuration of the four initial waves}\label{figure2}
\end{figure}

When the two shock waves $S^{-}_{12}$ and $S^{+}_{41}$ meet the outer sonic circle $C_1$ of state $(1)$,
shock diffraction occurs, and then $S^{-}_{12}$ and $S^{+}_{41}$ are expected to bend and form a diffracted shock,
denoted by $\Gamma_{\rm shock}$.
One of the main difficulties is that the location of $\Gamma_{\rm shock}$ is {\it apriori} unknown,
so that it is {\it apriori} unclear whether $\Gamma_{\rm shock}$ could intersect with the inner sonic circle $C_2$ of state $(2)$.
Zheng \cite{zheng2003global} studied this Riemann problem initially
with the assumption that angle $\alpha_1=\alpha_2$  is close to zero.
This assumption ensures that the two shocks bend slightly and the diffracted shock $\Gamma_{\rm shock}$ does
not meet the inner sonic circle $C_2$.
However, it has been an open problem when the angle between the two shocks is not close to $\pi$,
since the work of Zheng \cite{zheng2003global}.

The purpose of this paper is to
solve the Riemann problem globally for the general case
so that an affirmative answer to this open problem is provided.
In particular, we establish the global existence of entropy solutions for this Riemann problem allowing all
angles $\alpha_i\in (0,\frac{\pi}{2}), i=1,2$.
To solve this problem,
we first reformulate the problem as a free boundary problem
involving transonic shocks. Then we carefully establish the required appropriate
properties and uniform estimates of approximate and exact solutions so that the techniques developed
in Chen-Feldman \cite{chen2010global, chen2018mathematics} can be employed; also see
\cite{bae2009regularity,bae2019prandtl,chen2007stability,yuan2006transonic} and
the references cited therein.
This involves several core difficulties in the theory of the
underlying nonlinear PDEs: optimal estimates of solutions of nonlinear degenerate PDEs across the other sonic circle $C_1$
and corner
singularities (at corners $P_1$ and $P_3$ formed between the transonic shock as a free boundary and the sonic circle $C_1$),
in addition to the involved nonlinear PDE of mixed elliptic-hyperbolic type and free boundary
problem.

The organization of this paper is as follows: In \S \ref{section2},
we reformulate the Riemann problem into the free boundary problem and present our main results and strategies.
In \S \ref{sec3}, we give a complete proof of the global existence of solutions of the free boundary problem.
In \S \ref{sec4}, we obtain the optimal $C^{0,1}$--regularity of solutions near the degenerate sonic boundary $C_1$
and at corners $P_1$ and $P_3$.
Finally, in \S 5, we obtain the existence and regularity of global solutions of the 2-D Riemann problem of
the pressure gradient system \eqref{pgs}.

\section{Reformulation of the Riemann Problem and Main Theorem}\label{section2}
Based on the invariance of both the system and the Riemann initial data under the self-similar scaling,
we seek self-similar solutions in the self-similar coordinates.
For this purpose, in this section, we first reformulate the Riemann problem into a free boundary problem, present
the main results in the main theorem, Theorem 2.1, and then describe the strategies
to achieve them in \cref{sec2.3}--\cref{sec2.4}.

More precisely, we seek self-similar solutions with the form:
\begin{equation*}
(p,u,v)(t,x_1,x_2)=(p,u,v)(\xi,\eta) \qquad \text{with $(\xi,\eta)=(\frac{x_1}{t},\frac{x_2}{t}),\,t>0$}.
\end{equation*}
In the $(\xi,\eta)$--coordinates, system \eqref{pgs} can be rewritten as
\begin{equation}
\label{pgs2}
\begin{cases}
(\xi u)_\xi+ (\eta u)_\eta - p_\xi -2u=0, \\[1mm]
(\xi v)_\xi + (\eta v)_\eta -p_\eta -2v=0, \\[1mm]
(\xi E)_\xi +(\eta E)_\eta - (pu)_\xi- (pv)_\eta -2E=0.
\end{cases}
\end{equation}

\subsection{Shock waves and vortex sheets in the self-similar coordinates} \label{sec2.1}

Let $\eta=\eta(\xi)$ be a $C^1$--discontinuity curve of a bounded discontinuous solution of
system \eqref{pgs2}.
From the Rankine-Hugoniot relation on $\eta(\xi)$:
\begin{equation*}
\begin{cases}
(\xi\sigma-\eta)[u]-\sigma[p]=0,\\[1mm]
(\xi\sigma-\eta)[v]+[p]=0,\\[1mm]
(\xi\sigma-\eta)[E]-\sigma[pu]+[pv]=0,
\end{cases}
\end{equation*}
we find either the nonlinear discontinuities:
\begin{equation}\label{shock11}
\begin{cases}
\frac{\dd\eta}{\dd\xi}=\sigma_{\pm}=-\frac{[u]}{[v]}
  =\frac{\xi\eta\pm\sqrt{\overline{p}(\xi^2+\eta^2-\overline{p})}}{\xi^2-\overline{p}},\\[1.5mm]
[p]^2=\overline{p}([u]^2+[v]^2),
\end{cases}
\end{equation}
or linear discontinuity:
\begin{equation}\label{shock12}
\begin{cases}
\sigma_{0}=\frac{\eta}{\xi}=\frac{[v]}{[u]},\\[5pt]
[p]=0,
\end{cases}
\end{equation}
where $\overline{p}$
is the average of the pressure
on the two sides of the discontinuity,
and $[w]$ denotes the jump of $w$ across the discontinuity.

A discontinuity is called a shock if it satisfies \eqref{shock11} and the entropy
condition --- the pressure $p$ increases across it in the flow direction; that is,
the pressure on the wave front is larger than that on the wave back.
The shock is of two types, $S^{\pm}$:
\begin{itemize}
\item$ S=S^{+}$ if $\nabla_{(\xi,\eta)}p$ and the flow direction form a right-hand system;

\smallskip
\item $S=S^{-}$ if $\nabla_{(\xi,\eta)}p$ and the flow direction form a left-hand system.
\end{itemize}
A discontinuity is called a vortex sheet if it satisfies \eqref{shock12}.
A vortex sheet is of two types according to the sign of the vorticity:
$$
J^{\pm}: \quad \text{curl}(u,v)=\pm\infty.
$$

\subsection{Reformulation of the Riemann problem into
            a free boundary problem} \label{sec2.2}

We first show the following lemma:

\begin{lemma}\label{le2.1}
For fixed $(p_1,u_1,v_1)$ and $p_2=p_3=p_4$ satisfying $p_1>p_2$,
there exist states $(u_i,v_i), i=2,3,4$, such that the conditions
in \eqref{initialdate1} for the Riemann initial data hold,
and $(u_i,v_i), i=2,3,4$, depend on
angles $(\alpha_1,\alpha_2)$ continuously.
\end{lemma}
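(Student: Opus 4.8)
The plan is to construct the three unknown states one wave at a time from the Rankine--Hugoniot relations \eqref{shock11}--\eqref{shock12}, exploiting the normalization $p_2=p_3=p_4$, and then to verify admissibility and continuous dependence on $(\alpha_1,\alpha_2)$. The central simplification is that, since $p_3=p_2=p_4$, the pressure condition $[p]=0$ in \eqref{shock12} is automatically met across both $J_{23}^{+}$ and $J_{34}^{-}$; thus only the two shocks $S_{12}^{-}$ and $S_{41}^{+}$ carry a genuine pressure jump $[p]=p_1-p_2>0$, while each vortex sheet imposes merely that the velocity jump be aligned with its (radial) direction.

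First I would build states $(2)$ and $(4)$ from the shock relations. Writing the velocity jump as $([u],[v])=(u_1-u_i,v_1-v_i)$, the first line of \eqref{shock11} gives $[u]+\sigma[v]=0$, so the velocity jump is normal to the shock line, while the second line fixes its magnitude, $|([u],[v])|=(p_1-p_2)/\sqrt{\overline p}$ with $\overline p=\tfrac12(p_1+p_2)$. Hence, once the inclination of each shock is prescribed through $\alpha_1$ (for $S_{12}^{-}$) and $\alpha_2$ (for $S_{41}^{+}$), the post-shock states are obtained explicitly as
\begin{equation*}
(u_i,v_i)=(u_1,v_1)-\frac{p_1-p_2}{\sqrt{\overline p}}\,\widehat n_i,\qquad i=2,4,
\end{equation*}
where $\widehat n_i$ is the unit shock normal; the choice of the branch $\sigma_{\pm}$ in \eqref{shock11} together with the entropy condition (pressure increasing across the shock in the flow direction) selects the $S^{-}$ orientation at $(1)$--$(2)$ and the $S^{+}$ orientation at $(1)$--$(4)$. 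These expressions are smooth in the shock inclinations, hence continuous in $(\alpha_1,\alpha_2)$.

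Next I would determine state $(3)$ from the two vortex sheets. By \eqref{shock12} the sheet $J_{23}^{+}$ is the radial line whose direction coincides with the velocity jump $(u_2-u_3,v_2-v_3)$, and likewise $J_{34}^{-}$ with $(u_3-u_4,v_3-v_4)$; once the angular positions of these sheets are fixed by the symmetric sectorial geometry, the alignment conditions $\tfrac{v_2-v_3}{u_2-u_3}=\tan\theta_{23}$ and $\tfrac{v_3-v_4}{u_3-u_4}=\tan\theta_{34}$ become two linear equations for $(u_3,v_3)$. Provided the two sheet directions are linearly independent, this $2\times 2$ system has a unique solution depending continuously (indeed smoothly) on the data, and therefore on $(\alpha_1,\alpha_2)$; the sign of the resulting vorticity jump then fixes the types $J^{+}$ and $J^{-}$ required in \eqref{initialdate1}.

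The hard part will be the uniform admissibility and geometric closure over the entire range $\alpha_i\in(0,\tfrac{\pi}{2})$, rather than the individual constructions. I would need to check that the $2\times 2$ system for state $(3)$ stays non-degenerate (the two vortex-sheet directions never coincide) throughout the parameter range, that no shock degenerates to the sonic circle and the entropy inequality retains the correct sign, and that the four angular positions remain in the cyclic order $1,2,3,4$, so that the resulting sectors have positive angles and assemble into the configuration of Fig.~\ref{figure2}. Once these sign and non-degeneracy conditions are verified on the open range of angles, continuity of the explicit shock formulas and of the linear system for state $(3)$ yields the continuous dependence of $(u_i,v_i)$, $i=2,3,4$, on $(\alpha_1,\alpha_2)$, completing the proof.
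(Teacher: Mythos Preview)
Your strategy is the same as the paper's: determine $(u_2,v_2)$ and $(u_4,v_4)$ from the shock Rankine--Hugoniot relations (velocity jump normal to the shock, magnitude $(p_1-p_2)/\sqrt{\overline p}$), then solve a $2\times 2$ linear system coming from the two vortex-sheet alignment conditions for $(u_3,v_3)$, and read off continuous dependence from the explicit formulas.

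Two corrections are worth making. First, you have misread the geometry: in the configuration of Fig.~\ref{figure1}, both shocks $S_{12}^{-}$ and $S_{41}^{+}$ border state~$(1)$ and are symmetric with respect to the $\eta$--axis; their inclinations are both governed by $\alpha_1$, not by $\alpha_1$ and $\alpha_2$ respectively. The angle $\alpha_2$ enters only through the directions of the two vortex sheets $J_{23}^{+}$ and $J_{34}^{-}$. With the correct normals $\widehat n_2=(\sin\alpha_1,\cos\alpha_1)$ and $\widehat n_4=(-\sin\alpha_1,\cos\alpha_1)$, the paper obtains the explicit formulas
\[
(u_2,v_2)=(u_1,v_1)-\tfrac{[p]}{\sqrt{\overline p}}(\sin\alpha_1,\cos\alpha_1),\qquad
(u_4,v_4)=(u_1,v_1)+\tfrac{[p]}{\sqrt{\overline p}}(\sin\alpha_1,-\cos\alpha_1),
\]
and then the vortex-sheet system (continuity of the component normal to each sheet) is non-degenerate for every $\alpha_2\in(0,\tfrac{\pi}{2})$ simply because the two sheet directions are reflections of one another across the $\eta$--axis and hence linearly independent; the paper solves it explicitly for $(u_3,v_3)$.

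Second, the ``hard part'' you flag---uniform admissibility, entropy signs, non-degeneracy over the full parameter range, cyclic ordering of the sectors---is not part of this lemma. The paper's proof is a short explicit computation; the wave types $S^{\pm}$, $J^{\pm}$ and the entropy condition are imposed by choosing the correct sign in each RH branch, and nothing further needs to be checked here. The geometric consistency of the global picture is handled later, not in Lemma~\ref{le2.1}.
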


\begin{proof}
For given $(p_1,u_1,v_1)$ and $p_1>p_2$, we first consider $(u_2,v_2)$.
Since the Rankine-Hugoniot conditions on  $S_{12}^{-}$ hold:
\begin{equation*}\label{state2}
\begin{cases}
[u\sin\alpha_1+v\cos\alpha_1]=-\frac{[p]}{\sqrt{\overline{p}}},\\[1mm]
[-u\cos\alpha_1+v\sin\alpha_1]=0,
\end{cases}
\end{equation*}
a direct computation shows that
\begin{equation*}
S_{12}^{-}:=\big\{(\xi,\eta)\,:\, \xi\sin\alpha_1+\eta\cos\alpha_1=-\sqrt{\overline{p}}\big\}
\end{equation*}
with
\begin{equation*}\label{state21}
(u_2, v_2)=(u_1, v_1)-\frac{[p]}{\sqrt{\overline{p}}}(\sin\alpha_1,\cos\alpha_1).
\end{equation*}

Next, we turn to $(u_4,v_4)$. The Rankine-Hugoniot conditions on $S_{41}^{+}$ are:
\begin{equation*}\label{state4}
\begin{cases}
[u\sin\alpha_1-v\cos\alpha_1]=\frac{[p]}{\sqrt{\overline{p}}},\\[1mm]
[u\cos\alpha_1+v\sin\alpha_1]=0,
\end{cases}
\end{equation*}
which imply
\begin{equation*}
S_{41}^{+}:=\big\{(\xi,\eta)\,:\,  \xi\sin\alpha_1-\eta\cos\alpha_1=\sqrt{\overline{p}}\big\}
\end{equation*}
with
\begin{equation*}
\label{state41}
(u_4, v_4)=(u_1, v_1)+\frac{[p]}{\sqrt{\overline{p}}}(\sin\alpha_1, -\cos\alpha_1).
\end{equation*}

Finally, we consider $(u_3,v_3)$. To guarantee the existence of two vortex sheets
$J_{23}^{+}$ and $J_{34}^{-}$, we have
\begin{equation*}\label{state3}
\begin{cases}
u_2\cos\alpha_2-v_2\sin\alpha_2=u_3\cos\alpha_2-v_3\sin\alpha_2,\\[1mm]
u_4\cos\alpha_2+v_4\sin\alpha_2=u_3\cos\alpha_2+v_3\sin\alpha_2.
\end{cases}
\end{equation*}
Solving $(u_3,v_3)$ from the above two equations, we obtain
\begin{equation*}
\label{state31}
(u_3, v_3)=(u_1, v_1)=\frac{[p]}{\sqrt{\overline{p}}}(0, \frac{\sin\alpha_1}{\sin\alpha_2}-\cos\alpha_1).
\end{equation*}
It is clear that $(u_i,v_i), i=2,3,4$, depend on angles $(\alpha_1, \alpha_2)$ continuously.
\end{proof}

\begin{figure}[htbp]
\vspace{-1cm}
\centering{\includegraphics[scale=0.8]{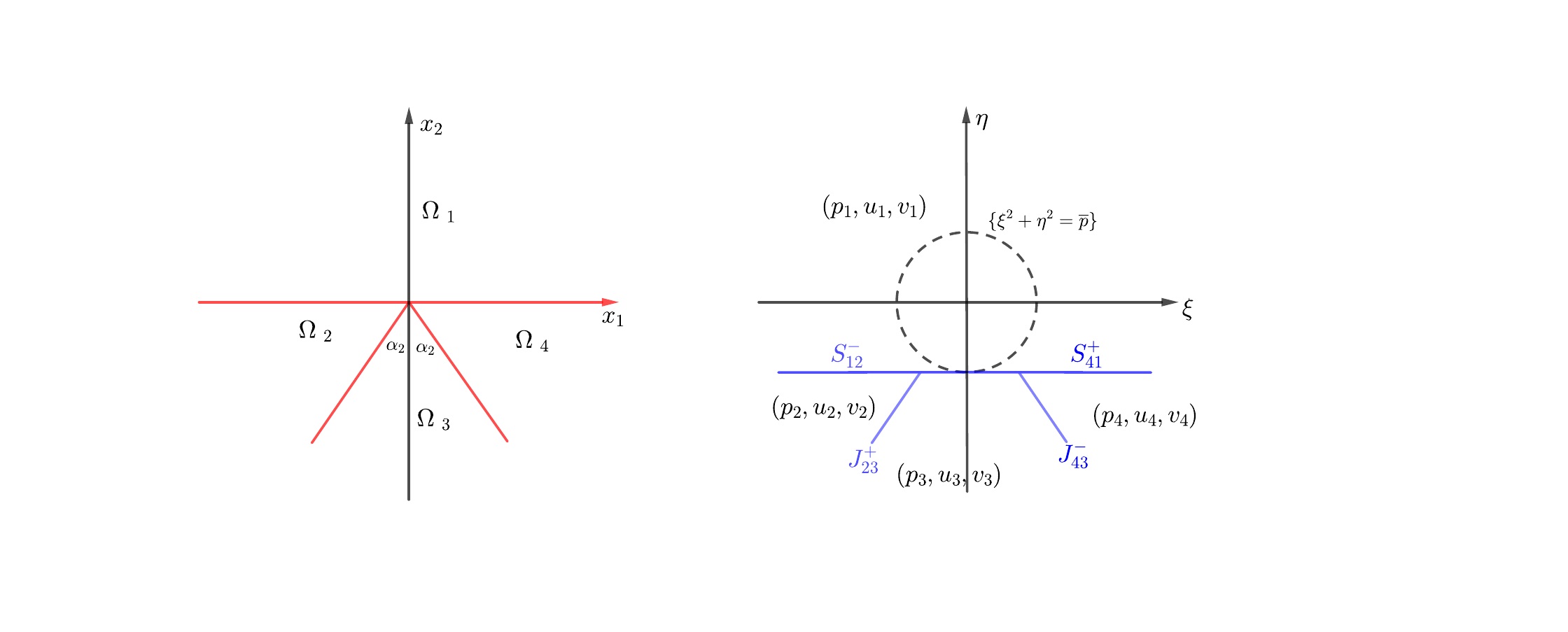}  }
\parbox{16cm}{\setlength{\abovecaptionskip}{-1.5cm}\caption{\label{figure3}
{The Riemann data and the global solution when $\alpha_1=0$}}}
\end{figure}

There is a critical case when $\alpha_1=0$.
Then the Riemann initial data satisfy
$$
p_1>p_2=p_3=p_4, \quad u_1=u_2=u_3=u_4,\quad v_1>v_2=v_3=v_4.
$$
The global Riemann solution is piecewise constant with two planar shocks:
\begin{equation*}
S_{12}^{-}/S_{41}^{+}: \begin{cases}
\,\eta=-\sqrt{\overline{p}}
\qquad \mbox{with $\overline{p}=\frac{p_1+p_2}{2}$},
\\[1mm]
\,[v]=-\frac{[p]}{\sqrt{\overline{p}}}, \qquad\, [u]=0,\\[1mm]
\xi<0 \,\,\text{for}\,\, S_{12}^{-},\qquad \,\,  \xi>0 \,\,\text{for}\,\, S_{41}^{+},
\end{cases}
\end{equation*}
and two characteristic lines $J_{23}^{+}$ and $J_{34}^{-}$ (reduced from the two vortex sheets),
as shown in Fig. \ref{figure3}.
The two planar shocks $S_{12}^{-}$ and $S_{41}^{+}$ are both tangential to the circle, $\{\xi^2+\eta^2=\overline{p}\}$,
with the tangent point on the circle as the end-point.
From the expression of $J_{23}^{+}$ given in \eqref{shock12},
we know that $p_2=p_3$ on both sides of $J_{23}^{+}$.
At the point where $J_{23}^{+}$ intersects with $S_{12}^{-}$,
we deduce that $J_{23}^{+}$ does not effect the shock owing to $p_2=p_3$.
The intersection between $J_{34}^{-}$ and $S_{41}^{+}$ can be handled
in the same way.

\medskip
In the following, we focus on the case that $\alpha_1\in (0,\frac{\pi}{2})$, for which
we want to solve.

\smallskip
From \eqref{pgs2}, we can derive a second-order nonlinear equation for $p$:
\begin{equation}\label{eqforp}
(p-\xi^{2})p_{\xi\xi}-2\xi\eta p_{\xi\eta}+(p-\eta^{2})p_{\eta\eta}+\frac{(\xi p_{\xi}+\eta p_{\eta})^{2}}{p}
-2(\xi p_{\xi}+\eta p_{\eta})=0.
\end{equation}
It is easy to verify that equation \eqref{eqforp} is of mixed hyperbolic-elliptic
type, which is hyperbolic when $\xi^2+\eta^2>p$ and elliptic when $\xi^2+\eta^2<p$.
The sonic circle is: $\xi^2+\eta^2=p$.

Furthermore, in the polar coordinates:
$$
(r,\theta)=(\sqrt{\xi^2+\eta^2}, \arctan(\frac{\eta}{\xi})),
$$
equation \eqref{eqforp} becomes
\begin{equation}
\label{eqforp2}
Qp:=(p-r^{2})p_{rr}+\frac{p}{r^{2}}p_{\theta\theta}+\frac{p}{r}p_{r}+\frac{1}{p}(r p_{r})^{2}-2r p_{r}=0,
\end{equation}
which is  hyperbolic when $p<r^2$ and elliptic when $p>r^2$.
The sonic circle is given by $r=r(\theta)$ satisfying that
$r^2(\theta)=p(r(\theta),\theta)$.

In the $(\xi,\eta)$--coordinates,
the four elementary waves come from the far-field (at infinity corresponding to $t=0$)
and keep planar waves before the two shocks meet the outer sonic circle $C_1$
of state $(1)$:
$$
C_1:=\{(\xi,\eta)\,:\,\xi^2+\eta^2=p_1\}.
$$
When the two shocks $S_{12}^{-}$ and $S_{41}^{+}$ meet the sonic circle $C_1$
at points $P_3$ and $P_1$, respectively,
our main concern is whether they bend and meet to form a diffracted shock, denoted by $\Gamma_{\rm shock}$;
see Fig. \ref{figure2}.
Since the whole configuration is symmetric with respect to the
$\eta$--axis,
we infer that $\Gamma_{\rm shock}$ must be vertical to $\xi=0$ at point $P_2$,
where the two diffracted shocks meet.
We should point out here particularly that the two vortex sheets
$J_{23}^{+}$ and $J_{34}^{-}$ and the diffracted shock $\Gamma_{\rm shock}$
have no influence each other during the intersection.
Therefore, from now on, we {\it ignore} the two vortex sheets first and focus only on the diffracted shock.

\smallskip
Moreover, we remark that, at this point,  we have not excluded the case that the diffracted shock may
degenerate partially into a portion of the inner sonic circle $C_2$ of state $(2)$.
Once this case occurs, $p=p_2$ on the sonic circle.
It will be seen that $p=p_2$  satisfies the oblique derivative conditions on
the diffracted shock automatically.

\smallskip
On $\Gamma_{\rm shock}$, the Rankine-Hugoniot conditions in the polar coordinates
must be satisfied:
\begin{equation}\label{eq:2.1}
\begin{cases}
r[u]-[p]\cos\theta=\frac{1}{r}\frac{\dd r}{\dd\theta}\sin\theta [p],\\[1mm]
r[v]-[p]\sin\theta=-\frac{1}{r}\frac{\dd r}{\dd\theta}\cos\theta[p],\\[1mm]
r[E]-[pu]\cos\theta-[pv]\sin\theta
=\frac{1}{r}\frac{\dd r}{\dd\theta}\big(\sin\theta [pu]-\cos\theta [pv]\big),
\end{cases}
\end{equation}
where $[w]$ denotes the jump of $w$ across $\Gamma_{\rm shock}$.
Owing to
$$
[pu]=\overline{p}\,[u]+\overline{u}\,[p],
$$
with $\overline{p}$ as the average of the two neighboring
states of $p$,
we eliminate $[u]$ and $[v]$ in the third equation in \eqref{eq:2.1} to obtain
\begin{equation*}
\Big(\frac{\dd r}{\dd\theta}\Big)^2=\frac{r^2(r^2-\overline{p})}{\overline{p}}.
\end{equation*}

The shock diffraction can be also considered to be created from point $P_2$ in two directions,
which implies that $r^{\prime}(\theta)>0$ for $\theta\in [\frac{3\pi}{2},\theta_{1}]$,
and $r^{\prime}(\theta)<0$ for $\theta\in [\theta_{3},\frac{3\pi}{2}]$,
where $\theta_{i}$ are denoted as the $\theta$--coordinates of points $P_i$, $i=1,3$,
respectively.
Thus, we choose
\begin{equation}\label{shockeq}
\frac{\dd r}{\dd\theta}=g(p(r(\theta),\theta),r(\theta)):=
\begin{cases}
r\sqrt{\frac{r^2-\overline{p}}{\overline{p}}}\qquad &\text{for}\,\, \theta\in [\frac{3\pi}{2},\theta_{1}],\\[5pt]
-r\sqrt{\frac{r^2-\overline{p}}{\overline{p}}}\qquad  &\text{for}\,\, \theta\in [\theta_{3},\frac{3\pi}{2}].
\end{cases}
\end{equation}
Moreover, it follows from \eqref{eq:2.1} that
\begin{equation}\label{eq:2.6}
[p]^2=\overline{p}\,\big([u]^2+[v]^2\big).
\end{equation}
From \eqref{eq:2.6},  taking the derivative $r^\prime\partial_r+\partial_\theta$
along the shock yields the derivative boundary condition on $\Gamma_{\rm shock}$:
\begin{equation}\label{eqrh}
\mathcal{B}p=\sum\limits_{i=1}^{2}\beta_i D_ip:=\beta_1 p_r+\beta_2 p_\theta=0,
\end{equation}
where $\mathbf{\beta}=(\beta_1,\beta_2)$ is a function of $(p,p_2,r(\theta),r^\prime(\theta))$ with
\begin{equation}
\begin{split}\label{eqrh2}
\beta_1&=2r^\prime\Big(\frac{r^2-\overline{p}}{r^2}-\frac{[p]}{4\overline{p}}
+\frac{\overline{p}(r^2-p)}{r^2p}\Big), \\
\beta_2&=\frac{4(r^2-\overline{p})}{r^2}-\frac{[p]}{2\overline{p}}.
\end{split}
\end{equation}
The obliqueness becomes
$$
(\beta_1,\beta_2)\cdot (1,-r^\prime(\theta))=-2r^\prime(\theta)\big(1-\frac{\overline{p}}{p}\big)
\equiv: \mu.
$$
Note that $\mu$ vanishes at point $P_2$  where $r^\prime(\frac{3\pi}{2})=0$.
When the obliqueness fails, we have
$$
\beta_1=0,\qquad \beta_2=-\frac{[p]}{2\overline{p}}<0,
$$
owing to  $p>p_2$.

Let $\Gamma_{\rm sonic}$ be the larger portion $\widehat{P_1P_3}$ of the sonic circle $C_1$
of state $(1)$. On $\Gamma_{\rm sonic}$, $p$ satisfies the Dirichlet boundary condition:
\begin{equation}\label{dirichlet}
p=p_1.
\end{equation}
Let $\Omega$ be the bounded domain enclosed by $\Gamma_{\rm sonic}$
and $\Gamma_{\rm shock}$.

\begin{problem}[Free boundary value problem]\label{problem}
\emph{The Riemann problem for the pressure gradient system \eqref{pgs} with the Riemann initial data
satisfying \eqref{initialdate1}  can be reformulated into
the following free boundary value problem}:
\begin{equation*}
\begin{cases}
\text{\emph{Equation} \eqref{eqforp2}}\, &\qquad \text{\emph{in}}\,\,\Omega,\\[3pt]
\text{\emph{The derivative boundary conditions} \eqref{eqrh}--\eqref{eqrh2}}\, &\qquad\text{\emph{on}}\,\, \Gamma_{\rm shock},\\[3pt]
\text{\emph{The Dirichlet boundary condition}}\, \eqref{dirichlet}
&\qquad\text{\emph{on}}\,\,  \Gamma_{\rm sonic},
\end{cases}
\end{equation*}
\emph{where $\Gamma_{\rm shock}$ is a free boundary to be determined as given by \eqref{shockeq}}.
\end{problem}

\smallskip
\subsection{Main theorem}\label{sec2.3}
We now state our main theorem of this paper.

\begin{theorem}\label{thm1}
There exists a global solution $p(r,\theta)$ of Problem {\rm \ref{problem}}
in domain $\Omega$ with the free boundary
$r=r(\theta), \theta\in [\theta_{3},\theta_{1}]$, such that
\begin{equation*}
p\in C^{2,\alpha}(\Omega)\cap C^{\alpha}(\overline{\Omega}), \qquad
r\in C^{2,\alpha}((\theta_{3},\theta_{1}))\cap C^{1,1}([\theta_{3},\theta_{1}]),
\end{equation*}
where $\alpha\in (0,1)$ depends only on the Riemann initial data.
Moreover, the global solution $(p(r,\theta),r(\theta))$
satisfies the following properties{\rm :}
\begin{itemize}
\item[\emph{(i)}] $p>p_2$ on $\Gamma_{\rm shock}$, that is, the diffracted
shock $\Gamma_{\rm shock}$ does not meet the sonic circle $C_2$ of state $(2)${\rm ;}

\item[\emph{(ii)}] The shock curve $\Gamma_{\rm shock}$ is strictly convex in the self-similar coordinates{\rm ;}

\item[\emph{(iii)}] The global solution $p(r,\theta)$ is $C^{0,\alpha}$ up to the sonic boundary
$\Gamma_{\rm sonic}$ and Lipschitz continuous across $\Gamma_{\rm sonic}${\rm ;}

\item[\emph{(iv)}] The Lipschitz regularity of the solution across $\Gamma_{\rm sonic}$
from the inside of the subsonic region is optimal.
\end{itemize}
\end{theorem}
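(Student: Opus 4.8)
The plan is to solve Problem~\ref{problem} by an iteration-and-fixed-point scheme in the spirit of the Chen--Feldman framework \cite{chen2010global,chen2018mathematics} for transonic shocks. First I would fix a candidate free boundary $r(\theta)$ in an admissible class $K$ --- a closed, convex, uniformly bounded subset of a H\"older space whose elements satisfy the shock ODE \eqref{shockeq} with the symmetry constraint $r'(\frac{3\pi}{2})=0$ and the prescribed endpoints $\theta_1,\theta_3$ on $C_1$. Each such curve determines the domain $\Omega$, the oblique operator $\mathcal{B}$ of \eqref{eqrh}--\eqref{eqrh2}, and the boundary data, and for this fixed geometry I would solve the boundary value problem for the quasilinear equation \eqref{eqforp2} with the oblique derivative condition on $\Gamma_{\rm shock}$ and $p=p_1$ on $\Gamma_{\rm sonic}$. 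Because \eqref{eqforp2} is of Keldysh type and degenerates as $p-r^2\to 0$ at $\Gamma_{\rm sonic}$, I would introduce an elliptic cutoff that renders the principal part uniformly elliptic at distance $\delta$ from the sonic arc, solve the regularized problem by linearization together with the Schauder fixed point theorem, derive $\delta$--uniform estimates, and then pass to the limit $\delta\to0$.

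The core of the argument is a family of uniform a priori estimates. The maximum principle yields the two-sided bound $p_2\le p\le p_1$. Property~(i) --- that $\Gamma_{\rm shock}$ cannot reach the inner sonic circle $C_2=\{r^2=p_2\}$ --- I would establish by a barrier argument: a strict subsolution below which $p$ cannot fall on $\Gamma_{\rm shock}$ forces $p>p_2$ there and separates the free boundary from $C_2$ quantitatively. Interior and boundary Schauder estimates away from the degenerate sonic arc and from the corners give $C^{2,\alpha}$ control in the elliptic region. These bounds allow me to define the iteration map $T:r\mapsto r_{\rm new}$, where $r_{\rm new}$ is obtained by integrating \eqref{shockeq} with the freshly solved $p$; uniform compactness makes $T$ a continuous compact self-map of $K$, and the Schauder fixed point theorem produces a solution of the coupled free boundary problem. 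The loss of obliqueness at $P_2$, where $r'=0$ and hence $\mu=0$, I would absorb through the symmetry about $\theta=\frac{3\pi}{2}$: on the half-domain the failed-obliqueness condition reduces to the compatible Neumann condition $p_\theta=0$, consistent with even reflection.

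The finer properties concern regularity. Strict convexity of $\Gamma_{\rm shock}$ (property~(ii)) I would obtain by differentiating \eqref{shockeq} and using the monotonicity of $p$ along the shock together with the sign of $r'$ on each half to pin down the sign of the curvature of $\Gamma_{\rm shock}$. At the sonic boundary the Keldysh degeneracy reduces locally, with $s$ the distance into the subsonic region, to the model
\begin{equation*}
s\,p_{ss}+p_{\theta\theta}+\cdots=0,
\end{equation*}
whose admissible solutions behave like $p=p_1+s\,(c\theta+d)+\cdots$. Thus $p$ is $C^{0,\alpha}$ up to $\Gamma_{\rm sonic}$ and globally Lipschitz across it, with a generically nonzero inward normal derivative; matching against the constant state $p_1$ outside then produces a genuine gradient jump, giving the optimal $C^{0,1}$ regularity of (iii)--(iv). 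The optimality in (iv) I would make rigorous by a barrier that pins the inward normal derivative away from zero, thereby excluding $C^1$ regularity.

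The main obstacle is to reconcile the two degeneracies --- the Keldysh degeneracy of \eqref{eqforp2} along $\Gamma_{\rm sonic}$ and the loss of obliqueness at $P_2$ --- together with the corner singularities at $P_1$ and $P_3$, while keeping the estimates strong enough near $C_2$ to secure the strict separation $p>p_2$. That separation is precisely what fails in the small-angle analysis of \cite{zheng2003global} once the angle is no longer close to zero, so establishing it uniformly for all $\alpha_i\in(0,\frac{\pi}{2})$ is the crux that renders the general case solvable.
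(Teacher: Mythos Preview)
Your overall architecture matches the paper's: regularize, solve a linear fixed-boundary problem, upgrade to the nonlinear fixed-boundary problem by a Schauder fixed point, iterate on the shock via \eqref{shockeq} and apply Schauder again, and finally remove the regularization. Your treatment of the convexity (ii) and of the sonic-boundary regularity (iii)--(iv) is also in line with what the paper does in \S\ref{sec3}--\S\ref{sec4}.

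The gap is in your handling of property (i), which is exactly the new ingredient over \cite{zheng2003global}. You propose a direct barrier: a strict subsolution that keeps $p$ bounded away from $p_2$ on $\Gamma_{\rm shock}$. This does not work as stated. First, the shock location is part of the unknown and the admissible class must a priori allow curves approaching $C_2$; a barrier uniform over all such curves would have to satisfy $\mathcal{B}v\ge0$ on boundaries whose oblique coefficient $\beta_1$ (proportional to $r'$) degenerates as the shock flattens onto $C_2$, and no candidate is offered. Second, at a putative contact point one has $p=p_2$, so a subsolution strictly above $p_2$ there already violates the comparison hypothesis; one cannot simply push $p$ above $p_2$ by a global subsolution.

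The paper instead argues by contradiction (Proposition~\ref{le4.1}). Assuming the shock meets $C_2$, two-sided barriers of the form $p_2+A(r_2-r)^{1/2}\pm B(r_2-r)^{\beta}\pm D(\theta-\theta_0)^2$ pin down the \emph{exact} growth $p-p_2\asymp(r_2-r)^{1/2}$ near the contact. A rescaling $u(S,T)=S^{-1/5}\varphi(S^{-12/5},y_0+S^{-14/5}T)$ then converts the degenerate equation into a uniformly elliptic one for $u$, and the Harnack inequality forces $\sup u\le C\inf u$ on dyadic boxes; but the $C^{1/2}$ growth yields $u(x_0^{-1},0)\gtrsim x_0^{7/5}$ while $u(x_0^{-2},0)\lesssim x_0^{14/5}$, which is incompatible for small $x_0$. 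This Harnack-versus-growth contradiction is the mechanism your sketch is missing, and it is the heart of why the result holds for all $\alpha_i\in(0,\frac{\pi}{2})$.

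Two smaller remarks. Your symmetry reduction at $P_2$ (Neumann on the axis of the half-domain) is a legitimate alternative to the paper's device of imposing a one-point Dirichlet value $p(P_2)=\hat p$ together with a local barrier; either resolves the tangential-obliqueness failure. And the paper regularizes by $Q^{\varepsilon}=Q+\varepsilon\Delta_{(\xi,\eta)}$ rather than by an elliptic cutoff at distance $\delta$ from $\Gamma_{\rm sonic}$; your variant could be made to work, but note that the separation argument above is carried out at the $\varepsilon$-level and then passed to the limit.
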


\subsection{Main strategies}\label{sec2.4}
\label{strategy}
There are three main difficulties in establishing the existence of
solutions of Problem \ref{problem}:
\begin{enumerate}[(i)]
\item  On the sonic boundary $\Gamma_{\rm sonic}$, owing to $p_1=r^2$, the ellipticity of equation \eqref{eqforp2} degenerates;
\item At point $P_2$ where the diffracted shock $\Gamma_{\rm shock}$ meets the $\eta$--axis $\xi=0$,
    the obliqueness of derivative boundary conditions fails, since
     $$
      (\beta_1,\beta_2)\cdot (1,-r^{\prime}(\theta))=0;
     $$
\item The diffracted shock $\Gamma_{\rm shock}$ is a free boundary, which may coincide with the sonic circle $C_2$ of state $(2)$.
\end{enumerate}

In the proof of the existence result, we first assume that $p\geq p_2+\delta$ holds on $\Gamma_{\rm shock}$ for some $\delta>0$,
which means that $\Gamma_{\rm shock}$ cannot coincide with the sonic circle $C_2$
of state $(2)$. This fact is eventually true and will be proved in \S \ref{sec4}.
For the second difficulty,  we may express this as a one-point Dirichlet condition $p(P_2)=\hat{p}$
by solving
$$
2r(\theta_2)=p(r(\theta_2),\theta_2)+p_2.
$$

We now illustrate a sketch of the proof for the existence of
solutions in the $(r,\theta)$--coordinates established in \S \ref{sec3}.
We divide the existence proof into four steps:

\begin{enumerate}[Step 1.]
\item Since equation \eqref{eqforp2} degenerates on the sonic boundary, we consider the regularized operator:
$$
Q^{\varepsilon}=Q+\varepsilon\Delta _{(\xi,\eta)}.
$$
We first fix a diffracted shock boundary $\Gamma_{\rm shock}$,
and then linearize the equation and the derivative boundary condition.
We employ the techniques developed in a series of works in \cite{lieberman1985perron,chen2014shock, kim2010global,zheng2003global}
to establish the existence result for the linear fixed mixed-type boundary problem for the regularized equation
in the polar coordinates.

\smallskip
\item Based on the estimates of solutions to the linear fixed boundary problem obtained in Step $1$,
 we prove the existence of a solution of the nonlinear fixed boundary problem via
 the Schauder fixed point theorem.

\smallskip
\item We apply the Schauder fixed point theorem again to obtain the existence of a solution of the free boundary problem
with the oblique derivative boundary condition for the regularized elliptic equation.
We conclude that the diffracted shock never meets the sonic circle $C_2$
of state $p_2$.

\smallskip
\item Finally, we study the limiting solution as the elliptic regularization parameter $\varepsilon$ tends to $0$
and complete the proof of the existence of solutions of Problem \ref{problem}.
\end{enumerate}

In \S \ref{sec4},  we introduce the
new coordinates $(x,y)=(r_1-r,\theta-\theta_1)$, which can flatten the sonic boundary.
It is shown that the optimal regularity of solutions across the sonic boundary is of $C^{0,1}$--regularity.
The most interesting point is the position of the diffracted shock, which is a free boundary.
This kind of free boundary problems occurs in many applications,
such as the shock reflection-diffraction
problem \cite{zheng2003global,vcanic2000free,canic2001free,chen2010global,chen2018mathematics,chen2014shock,kim2010global},
the Prandtl-Meyer shock configuration problem \cite{elling2008supersonic,bae2019prandtl}, among others.
In \S \ref{sec6}, we establish a corresponding theorem for the existence and regularity
of solutions of the $2$-D Riemann problem for the pressure gradient system \eqref{pgs}.

\section{Global Existence of Solutions of the Free Boundary Problem}\label{sec3}

In this section, we follow the strategies introduced in \S \ref{strategy}
to obtain the global existence of a solution of the free boundary problem, Problem \ref{problem}.
We first introduce the weighted norm used in this paper.

Let $\Omega^{\prime}:=\overline{\Omega}\backslash \overline{\Gamma_{\rm shock}}$.
For $P\in \{P_1,P_3\}$, we introduce the
corner regions:
$$
\Omega_{P}(\sigma):=\{x\in \Omega\,:\,\text{dist}(x,P)\leq \sigma\},\qquad
\Omega_V(\sigma):=\Omega_{p_1}(\sigma)\cup \Omega_{p_3}(\sigma).
$$
Define
\begin{align*}
&\Gamma^\prime(\sigma):=\{P\in \Gamma_{\rm shock}\,:\,\text{dist}(P,P_1)>\sigma,\, \text{dist}(P,P_3)>\sigma\},\\
&\Gamma(\sigma):= \Omega\cap\big(\cup_{P\in\Gamma^\prime(\sigma)}B_\sigma(P)\big),
\end{align*}
where $B_\sigma(P)$ is a ball of radius $\sigma$ centered at $P$.
Hence, $\Gamma(\sigma)$ is a region that
is close to $\Gamma_{\rm shock}$, but does not contain corners $P_1$ and $ P_3$.

We introduce the weighted norm
\begin{equation}
\label{funcspace}
\|u\|_a^b:=\sup\limits_{\sigma>0}
  \,\big\{\sigma^{a+b}\|u\|_{a,\overline{\Omega}\backslash(\Gamma(\sigma)\cup\Omega_{V}(\sigma))}\big\}
  \qquad\,\mbox{for any $a>0$ and $a+b\geq 0$}.
\end{equation}
The set of functions with finite norm $\|\cdot\|_{a}^{b}$  is denoted by $C_{b}^{a}(\Omega)$.

We now prove the existence of a solution of Problem 2.1 in the following four subsections.

\subsection{Regularized linear fixed boundary value problem}
For a fixed $\varepsilon\in(0,1)$, we consider the regularized
operator $Q^{\varepsilon}:=Q+\varepsilon\Delta_{(\xi,\eta)}$.
The equation for $p$ in the subsonic region  is
\begin{equation}
\label{regularizedforp}
Q^{\varepsilon}p:=(p-r^2+\varepsilon)p_{rr}+\frac{p+\varepsilon}{r^2}p_{\theta\theta}
+\frac{p+\varepsilon}{r}p_r+\frac{1}{p}(rp_r)^2-2rp_r=0.
\end{equation}
Since the position of the free boundary is not known {\it apriori},
we impose a cut-off function in \eqref{regularizedforp}.
Let $\zeta(\cdot)\in C^{\infty}(\mathbb{R})$ such that
\begin{equation}
\zeta(s)
=\begin{cases}
s\qquad &\text{if}\,\,s\geq 0,\\[1pt]
-\frac{\varepsilon}{2}\qquad &\text{if}\,\,s<-\varepsilon,
\end{cases}
\end{equation}
and $|\zeta^{\prime}(s)|\leq 1$.
We then consider the following modified governing equation:
\begin{equation}
\label{regularizedforp1}
Q^{\varepsilon,+}p:=\big(\zeta(p-r^2)+\varepsilon\big)p_{rr}+\frac{p+\varepsilon}{r^2}p_{\theta\theta}
+\frac{p+\varepsilon}{r}p_r+\frac{1}{p}(rp_r)^2-2rp_r=0.
\end{equation}

We define the iteration set
$\mathcal{R}^\varepsilon$ for shock $r(\theta)$,  which is a closed, convex subset of the H\"{o}lder
space $C^{1,\alpha}([\theta_3,\theta_1])$ as follows:

\begin{definition}
Let $r_i=\sqrt{p_i}$ be the radius of the sonic circle $C_i$ of state $p_i$, $i=1,2$.
The iteration set $\mathcal{R}^\varepsilon$ consists of elements $r(\theta)$ such that
\begin{enumerate}
\item[\emph{(R1)}]  $r(\theta_1)=r(\theta_3)=r_1;$

\smallskip
\item [\emph{(R2)}] $r_2 +\delta\leq r(\theta)\leq r_1$  for  all $\theta\in[\theta_3,\theta_1];$

\smallskip
\item[\emph{(R3)}]  $0< r^\prime(\theta)\leq r^\prime(\theta_1)$ for $\theta\in(\frac{3\pi}{2},\theta_1]$,
$r^\prime(\theta_3)\leq r^\prime(\theta)<0 $ for $\theta\in[\theta_3,\frac{3\pi}{2})$,
and $r^{\prime}(\frac{3\pi}{2})=0.$
\end{enumerate}
\end{definition}

In order to linearize the equation and the boundary conditions,  we define a function space $\mathcal{W}$.
\begin{definition}
The function space $\mathcal{W}\subset C^2_{(-\gamma_1)}$ consists of elements $\omega$ such that
\begin{enumerate}
\item[\emph{(W1)}] $p_2<\hat{p}^\varepsilon\leq \omega\leq p_1$, $\omega(P_2)=\hat{p}^\varepsilon$ and $\omega=p_1$ on $\Gamma_{\rm sonic};$

\smallskip
\item[\emph{(W2)}]  $\max\{\|\omega\|_{\alpha_0}, \|\omega\|_{1+\mu,\Gamma(d_0)}\}\leq K_0;$

\smallskip
\item[\emph{(W3)}]  $\|\omega\|_2^{(-\gamma_1)}\leq K_1$.
\end{enumerate}
The values of $\gamma_1,\alpha_0,\mu\in (0,1)$, and constants $d_0$, $K_0$, and $K_1$
will be specified later. The function set $\mathcal{W}$ is clearly closed, bounded, and convex.
\end{definition}

For a given $r(\theta)\in \mathcal{R}^\varepsilon$, let $\Gamma_{\rm shock}^{\varepsilon}$ be the fixed shock defined by
$$
\Gamma_{\rm shock}^{\varepsilon}:=\{(r(\theta),\theta)\,:\,\theta_3\leq\theta\leq\theta_1\}.
$$
The nonlinear equation \eqref{regularizedforp} and the boundary conditions \eqref{eqrh}--\eqref{eqrh2} are
now replaced by the linearized equation:
\begin{equation}\label{linearforp}
 L^{\varepsilon,+} p=\big(\zeta(\omega-r^2)+\varepsilon\big)p_{rr}+\frac{\omega+\varepsilon}{r^2}p_{\theta\theta}
+\frac{\omega+\varepsilon}{r}p_r+\frac{r^2\omega_r}{\omega}p_r-2rp_r=0,
\end{equation}
and the linearized oblique derivative boundary condition on $\Gamma^\varepsilon_{\rm shock}$:
\begin{equation}\label{eqrh3}
\mathcal{B}p:=\beta_1(\omega)p_r+\beta_2(\omega)p_\theta=0,
\end{equation}
with  $\omega\in \mathcal{W}$, and $\beta_i,\, i=1,2$, given in \eqref{eqrh2}.  Because of the bound
of $(W1)$,  equation \eqref{linearforp} is uniformly elliptic in $\Omega$ with ellipticity ratio depending on
the Riemann initial data and $\varepsilon$.

The other boundary condition is
\begin{equation}\label{eqcond}
p=p_1 \qquad \text{on $\Gamma_{\rm sonic}$}.
\end{equation}

Now we consider the following mixed-type boundary value problem for linear elliptic equation.

\begin{problem}[Linear fixed boundary value problem]\label{pr3-1}
Seek a solution $p$ of the linear elliptic equation \eqref{linearforp},
satisfying the derivative boundary condition \eqref{eqrh3} on given $\Gamma_{\rm shock}^{\varepsilon}$
and the Dirichlet boundary condition \eqref{eqcond} on $\Gamma_{\rm sonic}$.
\end{problem}

There have been several papers on the tangential oblique derivative problems for linear equations;
see \cite{egorov1969oblique,popivanov1997degenerate,hormander1966pseudo,winzell1981boundary} and the references cited therein.
However, we can not apply them directly  because the obliqueness of the derivative boundary condition fails at point $P_2$.
The main point is to find a way to remove the degeneracy.
We have the following result.

\begin{lemma}[Existence of solutions of Problem \ref{pr3-1}]\label{le2-1}
Assume that $\Gamma_{\rm shock}^{\varepsilon}$ is given by
$r(\theta)\in \mathcal{R}^{\varepsilon}$ for some $\alpha_1\in(0,1)$, and
$\omega\in\mathcal{W}$ for given $\alpha_0$, $\gamma_1$, $d_0$, $K_0$, and $K_1$.
Then there exist $\gamma_V,\alpha_\Omega\in(0,1)$  depending on $\varepsilon$, but independent of
$\alpha_1$ and $\gamma_1$, such that there is a solution
\begin{equation}
p^\varepsilon \in C^{1}(\overline{\Omega})\cap C^{2,\alpha}(\Omega^\prime)
\cap C^\gamma(\Omega_{V}(d))
\end{equation}
of Problem {\rm \ref{pr3-1}} for any $\alpha\leq \alpha_{\Omega}$,
$\gamma\leq \gamma_V$, and $d\leq d_0$.
Furthermore, solution $p^\varepsilon$ satisfies the following estimates{\rm :}
\begin{align*}
&\|p^\varepsilon\|_{\gamma,\Omega_{V}(d)}\leq M_1\|p^\varepsilon\|_0\qquad\,
\text{for any $\gamma\leq\gamma_V$},
\\
&\|p^\varepsilon\|_{2,\alpha;\Omega^\prime_{\rm loc}}\leq M_2\|p^\varepsilon\|_0
\qquad\, \text{for any $\alpha\leq\alpha_\Omega$},
\end{align*}
where $M_1$ is independent of $K_0$ and $K_1$,
and $M_2$ is independent of $K_1$ but depends on $K_0$.
\end{lemma}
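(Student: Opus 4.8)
The plan is to solve Problem \ref{pr3-1} by the Perron method for oblique derivative problems of Lieberman \cite{lieberman1985perron}, adapted to the mixed Dirichlet/oblique setting and, above all, to the single point $P_2$ where obliqueness degenerates. Since $\omega\in\mathcal{W}$ obeys the bounds in (W1), the cut-off factor satisfies $\zeta(\omega-r^2)+\varepsilon\ge\varepsilon/2$, so equation \eqref{linearforp} is uniformly elliptic in $\Omega$ with ellipticity ratio controlled by the Riemann data and $\varepsilon$; its coefficients lie in $C^\alpha$ away from corners because $\omega\in C^2_{(-\gamma_1)}$ and $r(\theta)\in\mathcal{R}^\varepsilon\subset C^{1,\alpha}$ (the first-order coefficient $r^2\omega_r/\omega$ being the only one requiring $\omega_r\in C^\alpha$). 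I would first record that the two boundary pieces are regular away from their junctions: $\Gamma_{\rm sonic}$ is a circular arc carrying the Dirichlet datum \eqref{eqcond}, and $\Gamma_{\rm shock}^\varepsilon$ is $C^{1,\alpha}$ carrying the oblique condition \eqref{eqrh3}, so standard interior and boundary Schauder theory applies on any subdomain staying away from the three distinguished points $P_1,P_2,P_3$. The whole task thus reduces to controlling the solution near these three points.

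The crux is $P_2=(r(\theta_2),\theta_2)$ with $\theta_2=\tfrac{3\pi}{2}$, where $r'(\theta_2)=0$ makes the obliqueness $\mu=-2r'(\theta)(1-\overline p/p)$ vanish and reduces \eqref{eqrh3} to the tangential relation $\beta_2 p_\theta=0$ with $\beta_2<0$; here classical oblique-derivative theory does not apply. The key idea, already signalled in \cref{sec2.4}, is to exploit the symmetry of the configuration about $\xi=0$. At $P_2$ the tangential relation $p_\theta=0$ is precisely the even-reflection condition across the axis, and combining $r'(\theta_2)=0$ with \eqref{shockeq} (so that $r(\theta_2)^2=\overline p=\tfrac12(p(P_2)+p_2)$) fixes the value $p(P_2)=2r(\theta_2)^2-p_2=:\hat p^\varepsilon$, consistent with $\omega(P_2)=\hat p^\varepsilon$ in (W1). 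I would therefore replace the degenerate oblique condition at this single point by the one-point Dirichlet condition $p(P_2)=\hat p^\varepsilon$; on the remaining part of $\Gamma^\varepsilon_{\rm shock}$ the condition is strictly oblique, since (R3) gives $r'(\theta)\neq0$ there. This both well-poses the construction near $P_2$ and, via even reflection, supplies the $C^1$ regularity at $P_2$.

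With the degeneracy resolved, I would construct $p^\varepsilon$ as the supremum of admissible subsolutions, using the constant $p_1$ and the comparison principle to secure the two-sided bound $\hat p^\varepsilon\le p^\varepsilon\le p_1$ (hence the $\|p^\varepsilon\|_0$ bound appearing on the right-hand sides of the estimates), and explicit affine barriers in the flattened variables to pin down the boundary behaviour along the smooth oblique portion. Interior $C^{2,\alpha}(\Omega')$ regularity and $C^{2,\alpha}$ up to the smooth parts of $\Gamma_{\rm sonic}$ and $\Gamma^\varepsilon_{\rm shock}$ then follow from linear Schauder and oblique-derivative Schauder estimates. The resulting constant $M_2$ in $\|p^\varepsilon\|_{2,\alpha;\Omega'_{\rm loc}}\le M_2\|p^\varepsilon\|_0$ depends on $K_0$ through the $C^\alpha$-norms of the coefficients of \eqref{linearforp} (governed by $\|\omega\|_{\alpha_0}\le K_0$) but not on $K_1$, which only enters the higher-order weighted bound (W3); the weighted form of the estimate is obtained on shrinking balls in the norm \eqref{funcspace}.

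The hardest part will be the corners $P_1$ and $P_3$, where the Dirichlet arc meets the oblique boundary and the solution is only Hölder continuous; quantifying precisely this loss is the purpose of the weighted space $C^a_b(\Omega)$ in \eqref{funcspace}. I would establish the corner estimate $\|p^\varepsilon\|_{\gamma,\Omega_V(d)}\le M_1\|p^\varepsilon\|_0$ by a barrier/comparison argument tuned to the opening angle at $P_1,P_3$ and to the Dirichlet–oblique mixing, producing a Hölder exponent $\gamma_V\in(0,1)$ that depends on $\varepsilon$ through the ellipticity ratio but is \emph{independent} of $K_0,K_1$ and of $\alpha_1,\gamma_1$ — the barriers using only ellipticity and boundary geometry, not the $C^\alpha$-norms of the coefficients. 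This independence is exactly what makes \cref{le2-1} usable in the subsequent Schauder fixed-point iterations, and it likewise forces $M_1$ to be independent of $K_0$ and $K_1$. Collecting the interior, smooth-boundary, $P_2$, and corner estimates yields $p^\varepsilon\in C^1(\overline\Omega)\cap C^{2,\alpha}(\Omega')\cap C^\gamma(\Omega_V(d))$ for all $\alpha\le\alpha_\Omega$, $\gamma\le\gamma_V$, $d\le d_0$, completing the proof.
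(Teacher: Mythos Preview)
Your overall architecture matches the paper's: use Lieberman's Perron framework \cite{lieberman1985perron} for the oblique parts, standard Schauder theory away from the three distinguished points, mixed-boundary corner estimates from \cite{lieberman1986mixed,gilbarg2015elliptic} at $P_1,P_3$ to produce $\gamma_V$ independent of $K_0,K_1,\alpha_1,\gamma_1$, and isolate $P_2$ as the only genuinely new difficulty. The identification of $\hat p^\varepsilon=2r(\tfrac{3\pi}{2})^2-p_2$ as the correct one-point Dirichlet value is also exactly what the paper uses.

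Where you diverge is in the \emph{mechanism} at $P_2$. You propose to impose $p(P_2)=\hat p^\varepsilon$ directly and then invoke even reflection across $\theta=\tfrac{3\pi}{2}$ to obtain $C^1$ there. This has a real gap: the linearized operator $L^{\varepsilon,+}$ in \eqref{linearforp} has coefficients built from $\omega\in\mathcal W$ and from the fixed curve $r(\theta)\in\mathcal R^\varepsilon$, and neither $\mathcal W$ nor $\mathcal R^\varepsilon$ carries any symmetry hypothesis about $\theta=\tfrac{3\pi}{2}$. Without $\omega(r,\tfrac{3\pi}{2}+s)=\omega(r,\tfrac{3\pi}{2}-s)$ (and likewise for $r(\theta)$), the reflected function is not a solution of the same equation, so reflection does not furnish the claimed regularity. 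A one-point Dirichlet condition by itself also does not well-pose an elliptic problem; you still need a construction that produces a solution continuous at $P_2$ with the prescribed value.

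The paper handles this differently and without symmetry: it excises a shrinking neighbourhood of $P_2$ by a segment $L_\sigma$ at distance $\sigma$ above $P_2$, imposes the Dirichlet value $\hat p^\varepsilon$ on $L_\sigma$, solves the resulting (now genuinely oblique) mixed problem on $\Omega_\sigma$ via \cite{lieberman1985perron}, and passes $\sigma\to0$ using the maximum principle. Continuity at $P_2$ is then secured by an explicit one-sided barrier $u=\hat p^\varepsilon+C\bigl(1-e^{-l(\theta-3\pi/2)}\bigr)$, exploiting the fact that the sign of $(\beta_1,\beta_2)\cdot\boldsymbol\nu$ flips across $P_2$ (because $r'$ does), so that the comparison inequality for $\mathcal B u$ points the right way on each half of $\Gamma^\varepsilon_{\rm shock}$. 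This approximation-plus-barrier device is the missing concrete ingredient in your sketch; once you have it, the rest of your plan goes through essentially as written.
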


\begin{proof}
It suffices to prove the local existence at point $P_2$, where the obliqueness of the derivative boundary condition fails.
Let $B$ be a sufficiently small neighborhood of $P_2$ with smooth boundary.
Let $L_{\sigma}$ be the line with $\sigma$--distance from point $P_2$ upward.
Let $\Omega_{\sigma}$ be the domain enclosed by $\partial B$, $\Gamma^{\varepsilon}_{\rm shock}$, and $L_{\sigma}$.
Now we consider the following boundary value problem:
\begin{equation}\label{eq3.9}
\begin{cases}
Q^{\varepsilon,+}p=0\qquad & \text{in $\Omega_{\sigma}$}, \\
\mathcal{B}p=0 \qquad &\text{on $\partial\Omega_{\sigma}\cap\Gamma^{\varepsilon}_{\rm shock}$},\\
p=h &\text{on $\partial B \cap \Omega$},\\
p=\hat{p}^{\varepsilon} &\text{on $L_{\sigma}$},
\end{cases}
\end{equation}
where $h$ is a smooth function satisfying that $\hat{p}^{\varepsilon}<h\leq p_1$.
Following \cite{lieberman1985perron},
there exists a solution
$$
p_{\sigma}\in C(\overline{\Omega}\cap \overline{\hat{B}})\cap C^{2,\alpha}(\Omega_{\sigma}\cap \hat{B})
$$
for a smaller neighborhood $\hat{B}$ of point $P_2$.
By the maximum principle, $p_\sigma$ converges locally in $C^2(\Omega\cap\hat{B})$
to a solution $p\in C^{2,\alpha}(\Omega\cap \hat{B})$ as $\sigma\rightarrow 0+$.

Next we construct a barrier function to prove the continuity of $p$ at $P_2$.
Define
$$
u=\hat{p}^{\varepsilon}+C(1-e^{-l(\theta-\frac{3\pi}{2})}),
$$
where $C>0$ and $l>0$ are specified later.
For the equation, we have
$$
Q^{\varepsilon,+}u=-\frac{Cl^2(u+\varepsilon)}{r^2}e^{-l(\theta-\frac{3\pi}{2})}<0.
$$
It is direct to see that $u\geq \hat{p}^{\varepsilon}$ on $L_{\varepsilon}$.
Choose $C$ large enough such that
$$
u\geq \sup|h| \qquad \mbox{on $\partial \hat{B}\cap \Omega$}.
$$
For the oblique derivative boundary condition along $\partial\Omega_{\sigma}\cap\Gamma_{\rm shock}^{\varepsilon}$,
we find that
\begin{itemize}
 \item $(\beta_1,\beta_2)\cdot{\boldsymbol \nu}>0$\, and\, $\mathcal{B}u>0$ for $\theta<\frac{3\pi}{2}$,

 \smallskip
 \item $(\beta_1,\beta_2)\cdot{\boldsymbol \nu}<0$\, and \,\,$\mathcal{B}u<0$ for $\theta>\frac{3\pi}{2}$,
\end{itemize}
where ${\boldsymbol \nu}$ denotes the outward normal to $\partial\Omega_{\sigma}\cap\Gamma_{\rm shock}^{\varepsilon}$.
Thus, by the comparison principle, we have
$$
\hat{p}^{\varepsilon}\leq p\leq u,
$$
which implies that $p$ is continuous at $P_2$.

Then we can follow the arguments in \cite{lieberman1986mixed,gilbarg2015elliptic} to
prove the existence of a solution away from point $P_2$ and obtain the estimates near both points $P_1$ and $P_3$,
and both boundaries $\Gamma_{\rm sonic}$ and $\Gamma_{\rm shock}^{\varepsilon}\backslash \Omega_{P_2}(d)$.
This completes the proof.
\end{proof}

Since the interior Schauder estimates can be further applied,
any solution in $C^{2,\alpha}_{\rm loc}(\Omega^{\prime})$ is actually in $C^{3}_{\rm loc}(\Omega)$.
We next establish the H\"{o}lder gradient estimate of the solution on $\Gamma_{\rm shock}^{\varepsilon}$.

\begin{lemma}\label{le3-3}
Assume that $\Gamma_{\rm shock}^{\varepsilon}$ is given by $\{(r(\theta),\theta)\}$ with
$r(\theta)\in \mathcal{R}^{\varepsilon}$ for some $\alpha_1\in(0,1)$.
Then there exists a positive constant $d_0$ such that, for every $d\leq d_0$,
any solution $p^\varepsilon\in C^1_{\rm loc}(\Omega\cup\Gamma_{\rm shock})\cap C^3_{\rm loc}(\Omega)$
of Problem {\rm \ref{pr3-1}} satisfies
\begin{equation}\label{estimate3}
\|p^\varepsilon\|_{1+\mu,\Gamma(d)\backslash \Omega_{V}(d_0)}
\leq C(\varepsilon,\alpha_1,\mu,\gamma_1,K_0,K_1)\|p^\varepsilon\|_0
\end{equation}
for any $\mu<\min\{\alpha_1,\gamma_1\}$.
\end{lemma}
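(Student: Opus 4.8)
The plan is to reduce the estimate to local boundary bounds near $\Gamma_{\rm shock}^{\varepsilon}$ and then to patch them together by a covering argument. By the interior Schauder estimates noted just before the lemma, $p^\varepsilon\in C^3_{\mathrm{loc}}(\Omega)$, so it suffices to establish a local $C^{1,\mu}$ bound in a fixed-size neighborhood of each boundary point $P^\ast\in\Gamma_{\rm shock}^{\varepsilon}$ with $\mathrm{dist}(P^\ast,P_1),\,\mathrm{dist}(P^\ast,P_3)>d$. At each such $P^\ast$ I would flatten $\Gamma_{\rm shock}^{\varepsilon}$ by the change of variables induced by $r=r(\theta)$; since $r\in\mathcal R^{\varepsilon}\subset C^{1,\alpha_1}$, this map is $C^{1,\alpha_1}$, and in the straightened variables equation \eqref{linearforp} is uniformly elliptic, its ellipticity being controlled from below by $\varepsilon$ together with the bound $p_2<\hat{p}^\varepsilon\le\omega\le p_1$ of $(W1)$, while its coefficients---most notably $\tfrac{r^2\omega_r}{\omega}$---are H\"older continuous of exponent $\min\{\alpha_1,\gamma_1\}$, their norms being controlled by $\|\omega\|_{1+\mu,\Gamma(d_0)}\le K_0$ in $(W2)$ and $\|\omega\|_2^{(-\gamma_1)}\le K_1$ in $(W3)$. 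This is precisely what restricts the admissible exponent to $\mu<\min\{\alpha_1,\gamma_1\}$.

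Fix once and for all a small neighborhood $N$ of $P_2$. On $\Gamma(d)\backslash\big(\Omega_V(d_0)\cup N\big)$ the obliqueness $-2r'\big(1-\tfrac{\overline p}{\omega}\big)$, with $\overline p=\tfrac{\omega+p_2}{2}$, is bounded away from zero: the factor $1-\tfrac{\overline p}{\omega}=\tfrac{\omega-p_2}{2\omega}\ge\tfrac{\hat{p}^\varepsilon-p_2}{2p_1}>0$ is uniformly positive by $(W1)$, and $|r'|$ is bounded below off $N$ by $(R3)$. There the derivative condition \eqref{eqrh3} is genuinely oblique with $C^{\min\{\alpha_1,\gamma_1\}}$ coefficients $\beta_i(\omega)$, and the standard boundary Schauder estimate for oblique derivative problems (\cite{gilbarg2015elliptic,lieberman1986mixed}) yields $\|p^\varepsilon\|_{1+\mu}\le C\|p^\varepsilon\|_0$ on fixed-size neighborhoods, the homogeneity of \eqref{linearforp} and \eqref{eqrh3} letting every lower-order contribution be absorbed into $\|p^\varepsilon\|_0$.

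The main obstacle is the neighborhood $N$ of $P_2$, where $r'(\tfrac{3\pi}{2})=0$, so $\beta_1=0$, the vector $(\beta_1,\beta_2)$ becomes tangent to $\Gamma_{\rm shock}^{\varepsilon}$, and the estimate above degenerates. Here I would exploit the structure already present in the proof of Lemma \ref{le2-1}. First, the obliqueness changes sign across $P_2$---positive for $\theta<\tfrac{3\pi}{2}$ and negative for $\theta>\tfrac{3\pi}{2}$---which is exactly the sign pattern making the barrier $u=\hat{p}^\varepsilon+C\big(1-e^{-l(\theta-\frac{3\pi}{2})}\big)$ a super/sub-solution on the two sides and pinning the modulus of continuity of $p^\varepsilon$ at $P_2$. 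Second, since the configuration is symmetric about the $\eta$--axis, Problem \ref{pr3-1} is invariant under $\xi\mapsto-\xi$ for symmetric iterates $(\omega,r)$, so by uniqueness $p^\varepsilon$ is even in $\xi$ and hence $p^\varepsilon_\theta=0$ along $\{\theta=\tfrac{3\pi}{2}\}\cap\Omega$; this is precisely the relation to which the tangential condition $\beta_2 p^\varepsilon_\theta=0$ reduces at $P_2$, so the degenerate boundary datum is compatible and $P_2$ may be treated as a right-angle corner between $\Gamma_{\rm shock}^{\varepsilon}$ and the symmetry axis carrying the reflection condition $p^\varepsilon_\xi=0$. Combining the comparison bounds from the barriers with a corner estimate for this reflected configuration should give the uniform $C^{1,\mu}$ bound up to $P_2$. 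The delicate point---where the real work lies---is to make this degenerate-oblique/corner estimate quantitative, so that the gradient H\"older exponent can be taken to be any $\mu<\min\{\alpha_1,\gamma_1\}$ while the constant stays controlled by $(\varepsilon,\alpha_1,\mu,\gamma_1,K_0,K_1)$; the barrier by itself only controls the modulus of continuity, not the H\"older norm of $\nabla p^\varepsilon$.

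Finally, I would cover $\Gamma'(d)$ by finitely many of the neighborhoods above (with overlap multiplicity bounded independently of the scale) and combine the two families of local estimates, absorbing the lower-order terms into $\|p^\varepsilon\|_0$, to obtain \eqref{estimate3} on $\Gamma(d)\backslash\Omega_V(d_0)$ for every $d\le d_0$.
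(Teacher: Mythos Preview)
Away from $P_2$ your argument is fine and matches the paper: uniform obliqueness plus $C^{\min\{\alpha_1,\gamma_1\}}$ coefficients, then the standard oblique-derivative Schauder estimate (the paper cites Theorem~6.30 in \cite{gilbarg2015elliptic}).

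Near $P_2$, however, your route diverges from the paper's and---as you yourself flag---has a genuine gap. Your symmetry reduction has two problems. First, Problem~\ref{pr3-1} is posed for an \emph{arbitrary} $\omega\in\mathcal W$, and nothing in $(W1)$--$(W3)$ forces $\omega$ to be even in~$\xi$; without that, the linearized solution need not be symmetric, so you cannot conclude $p^\varepsilon_\theta=0$ on $\{\theta=\tfrac{3\pi}{2}\}$ by uniqueness. Second, even granting symmetry, treating $P_2$ as a mixed oblique/Neumann corner does not by itself yield $C^{1,\mu}$ regularity up to the vertex for every $\mu<\min\{\alpha_1,\gamma_1\}$: corner theory gives $C^{\gamma}$ with an exponent dictated by the opening angle and the boundary operators, and pushing this past $1$ requires a separate compatibility/spectral computation you have not done. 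The barrier from Lemma~\ref{le2-1} controls only the oscillation of $p^\varepsilon$, not of $\nabla p^\varepsilon$, which is exactly the point you concede.

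The paper closes this gap by a different device, taken from \cite{canic2006free}: normalize $u=\dfrac{p^\varepsilon}{1+\|Dp^\varepsilon\|_0}$ so that $\|Du\|_0\le 1$, and set $z=\mathcal{B}u=\sum_i\beta_i(P)D_iu$ with the coefficients frozen. Then $z$ itself satisfies a uniformly elliptic equation in a neighborhood of $P_2$ and vanishes on $\Gamma_{\rm shock}^\varepsilon$, so one can build barriers of the form $\pm f_0\zeta^\mu$ (with $\zeta$ the distance to $P_2$) directly for the \emph{derivative quantity} $z$, obtaining $|z|\le f_0\zeta^\mu$. This pointwise decay of a first-derivative combination, together with the interior $C^{1,\gamma_2}$ bound on $z+f$, gives $|D(z+f)|\le Cd^{\mu-1}$ and hence $\|u\|_{1+\mu}\le C$; an interpolation $\|Dp^\varepsilon\|_0\le\alpha\|p^\varepsilon\|_{1+\mu}+C_\alpha\|p^\varepsilon\|_0$ then converts this into \eqref{estimate3}. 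The essential idea you are missing is to apply the barrier not to $p^\varepsilon$ but to $\mathcal{B}p^\varepsilon$, so that the $\mu$-power decay lands on a gradient rather than on the solution.
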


\begin{proof}
Away from a neighborhood $B_{d_0}(P_2)$ of $P_2$,
we can employ Theorem $6.30$ in \cite{gilbarg2015elliptic} to obtain
\eqref{estimate3} in $\Gamma(d)\backslash \big(\cup_{i=1,2,3}B_{d_0}(P_i)\big)$.
For the estimates near $P_2$, we follow the technique used in
\cite{canic2006free}.
The main idea is that, for a given solution $p$ of the linear problem  \eqref{linearforp}--\eqref{eqcond},
we define
$$
u=\frac{p}{1+\|Dp\|_{0}},\qquad\, z=\mathcal{B}u:=\sum\limits_{i=1}^{2}\beta_i(P)D_{i}u.
$$
Taking $f(\zeta)=f_0\zeta^{\mu}$ for any $\mu<\gamma_2$, we can prove that $\pm f(\zeta)$ are barrier
functions for $z$ on $B_{d_{0}}(P_2)\cap \overline{\Omega}$ such that $|z|\leq f$.
The barrier functions lead to
$$
|D(z+f)|\leq \|(z+f)\|_{1+\gamma_2}^{1-\mu}d^{\mu-1}\leq Cd^{\mu-1} \qquad \text{for $d<d_0$},
$$
which implies that $\|u\|_{1+\mu}\leq C$.

It follows from the interpolation inequality that
$$
\|p\|_{1+\mu}\leq C\big(1+\|Dp\|_{0}\big)\leq C\big(1+\alpha\|p\|_{1,\mu}+C_{\alpha}\|p\|_{0}\big)
$$
with small $\alpha>0$, so that \eqref{estimate3} holds.
Also see \cite{canic2006free} for more details.
\end{proof}

\subsection{Regularized nonlinear fixed boundary value problem}
This subsection is devoted to the proof of the existence of solutions of the nonlinear
equation \eqref{regularizedforp} with a fixed boundary $r(\theta)\in\mathcal{R}^\varepsilon$.
We have the following lemma.

\begin{lemma}\label{nonfixed}
For each $\varepsilon\in(0,1)$, given $r^\varepsilon(\theta)\in\mathcal{R}^\varepsilon$,
there exists a solution $p^\varepsilon\in C^{2,\alpha}_{-\gamma}(\Omega)$ of
equation \eqref{regularizedforp} with conditions \eqref{eqrh}--\eqref{eqrh2}
for $r^{\varepsilon}(\theta)$ and \eqref{eqcond} such that
\begin{equation}
p_2<\overline{p}^\varepsilon\leq p^\varepsilon<p_1,\quad p^\varepsilon>r^2
\qquad\text{in $\overline{\Omega}$}.
\end{equation}
Moreover, for some $d_0>0$, $p^\varepsilon(r,\theta)$ satisfies
\begin{equation}
\|p^\varepsilon\|_{\gamma,\Gamma(d_0)\cup \Omega_{V}(d_0)}\leq K_1,
\end{equation}
where $\gamma$ and $K_1$ depend on $\varepsilon$, $\gamma_V$, and $K$, but
independent of $\alpha_1$.
\end{lemma}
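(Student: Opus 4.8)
The plan is to realize $p^\varepsilon$ as a fixed point of a Schauder iteration on the closed convex set $\mathcal{W}$, holding the shock $r^\varepsilon(\theta)\in\mathcal{R}^\varepsilon$ fixed throughout. First I would define a map $\mathcal{T}$ on $\mathcal{W}$ by freezing the coefficients of $Q^{\varepsilon,+}$ at a given $\omega\in\mathcal{W}$: this produces precisely the linear, uniformly elliptic, mixed-type Problem \ref{pr3-1}, namely \eqref{linearforp} with the oblique condition \eqref{eqrh3} on $\Gamma^\varepsilon_{\rm shock}$ and the Dirichlet condition \eqref{eqcond} on $\Gamma_{\rm sonic}$, whose solvability is guaranteed by Lemma \ref{le2-1}; I set $\mathcal{T}(\omega)$ to be that solution. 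The map is single-valued because \eqref{linearforp} carries no zeroth-order term, so the maximum principle together with the oblique boundary relation (oblique everywhere on $\Gamma^\varepsilon_{\rm shock}$ except at $P_2$, where $\hat p^\varepsilon$ is prescribed as a one-point Dirichlet datum) forces uniqueness. A fixed point $p^\varepsilon=\mathcal{T}(p^\varepsilon)$ then has $\omega=p^\varepsilon$ in the coefficients, and since $\tfrac{r^2\omega_r}{\omega}p_r=\tfrac1p(rp_r)^2$ when $\omega=p$, it solves the cut-off equation \eqref{regularizedforp1} together with the genuinely nonlinear conditions \eqref{eqrh}--\eqref{eqrh2}.

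The core of the argument is the self-map property $\mathcal{T}(\mathcal{W})\subseteq\mathcal{W}$, and here the order in which the constants are fixed is decisive. Writing $p=\mathcal{T}(\omega)$: the comparison principle for \eqref{linearforp} (again exploiting the absence of a zeroth-order term, with the barrier at $P_2$ built in the proof of Lemma \ref{le2-1}) yields $\|p\|_0\le p_1$ and $p\ge\hat p^\varepsilon>p_2$, and the strong maximum principle with the Hopf lemma upgrades these to the strict chain $p_2<\overline p^\varepsilon\le p<p_1$; this is (W1). For (W2) I would invoke the corner estimate of Lemma \ref{le2-1}, whose constant $M_1$ is independent of both $K_0$ and $K_1$, so that $\|p\|_0\le p_1$ gives $\|p\|_{\alpha_0}\le M_1 p_1$; combined with the shock gradient estimate of Lemma \ref{le3-3}, this lets me fix $K_0$. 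For (W3) I would feed $K_0$ into the interior Schauder estimate of Lemma \ref{le2-1}, whose constant $M_2$ depends on $K_0$ but, crucially, \emph{not} on $K_1$; weighting by the distance to the corners $P_1,P_3$ produces $\|p\|_2^{(-\gamma_1)}\le C(K_0)$, after which $K_1$ is chosen no smaller than this bound. Because $M_1$ ignores $K_0,K_1$ and $M_2$ ignores $K_1$, this staged choice of $\alpha_0,\gamma_1,d_0,K_0,K_1$ closes without circularity.

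It then remains to check that $\mathcal{T}$ is continuous and compact. Continuity in a weaker weighted norm follows from continuous dependence of the linear solution of Problem \ref{pr3-1} on its coefficients, which in turn depend continuously on $\omega$; compactness follows from the uniform (W3)-bound together with the compact embedding of the weighted space $C^{2,\alpha}_{-\gamma_1}$ into $C^2_{-\gamma_1}$. The Schauder fixed point theorem then delivers $p^\varepsilon=\mathcal{T}(p^\varepsilon)$, a solution of \eqref{regularizedforp1}.

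The hard part, which I expect to be the genuine obstacle, is to remove the cut-off $\zeta$ and thereby pass from \eqref{regularizedforp1} to the unmodified equation \eqref{regularizedforp}: this requires the subsonicity $p^\varepsilon\ge r^2$ in $\overline\Omega$, with strict inequality in the interior, so that $\zeta(p^\varepsilon-r^2)=p^\varepsilon-r^2$. This is not automatic, since on $\Gamma_{\rm sonic}$ one only has the equality $p^\varepsilon=p_1=r^2$, while near $\Gamma_{\rm shock}$ the lower bound $p^\varepsilon\ge\hat p^\varepsilon$ need not dominate $r^2$ when $r$ is close to $r_2+\delta$. I would therefore establish it by a direct comparison argument for the quantity $p^\varepsilon-r^2$, using the sign structure of $Q^{\varepsilon,+}$ on the putative subsonic set and the Hopf lemma at the degenerate boundary $\Gamma_{\rm sonic}$ to rule out an interior zero of $p^\varepsilon-r^2$. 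Once subsonicity holds, the ellipticity of $Q^\varepsilon$ is genuine, $p^\varepsilon\in C^{2,\alpha}_{-\gamma}(\Omega)$ solves \eqref{regularizedforp}, and the advertised bound $\|p^\varepsilon\|_{\gamma,\Gamma(d_0)\cup\Omega_V(d_0)}\le K_1$ is exactly what Lemmas \ref{le2-1}--\ref{le3-3} yield for the fixed point.
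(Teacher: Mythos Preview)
Your approach---freeze coefficients at $\omega\in\mathcal{W}$, solve Problem~\ref{pr3-1} via Lemma~\ref{le2-1}, and apply Schauder---is exactly the paper's. Two points deserve comment.

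First, your closure of (W3) has a gap. The second estimate in Lemma~\ref{le2-1} is a \emph{local} $C^{2,\alpha}$ bound on $\Omega'_{\rm loc}$, i.e.\ compactly away from $\Gamma^\varepsilon_{\rm shock}$; it does not by itself control the weighted norm $\|p\|_2^{(-\gamma_1)}=\sup_\sigma\sigma^{2-\gamma_1}\|p\|_{2,\overline\Omega\setminus(\Gamma(\sigma)\cup\Omega_V(\sigma))}$ as $\sigma\to0$, because the exclusion of $\Gamma(\sigma)$ means the supremum probes the blow-up rate of $\|p\|_2$ precisely \emph{toward} the shock. There one must invoke Lemma~\ref{le3-3}, whose constant \emph{does} depend on $K_1$. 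The paper closes the loop not by $K_1$-independence of any single constant, but by two facts: (a) Lemma~\ref{le3-3} gives $d^{2-\gamma_1}\|p\|_2\le d^{1-\gamma_1+\mu}C(K_1)$ with a positive power of $d$; (b) the corner estimate of Lemma~\ref{le2-1} combined with interpolation yields a bound $K_V$ independent of $K_1$ for all $d<d_V$. One then fixes $\hat d$ small so that $\hat d^{1-\gamma_1+\mu}C(K_1)\le K_1$, and for $\sigma>\hat d$ smoothness gives bounds independent of $K_1$. Your staged choice of constants is right in spirit but wrong in detail about which estimate ignores $K_1$.

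Second, for compactness the paper does not use the embedding $C^{2,\alpha}_{(-\gamma_1)}\hookrightarrow C^2_{(-\gamma_1)}$ you propose; instead it observes that $\gamma_V$ from Lemma~\ref{le2-1} is independent of $\gamma_1$, sets $\gamma_1=\gamma_V/2$, and uses $T(\mathcal{W})\subset C^2_{(-\gamma_V)}\hookrightarrow\hookrightarrow C^2_{(-\gamma_1)}$. Either mechanism works.

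Your final paragraph is well-taken: the paper's proof of this lemma does not address removal of the cut-off $\zeta$ or the strict subsonicity $p^\varepsilon>r^2$ at all---it simply asserts the conclusion. The barrier/Hopf attack on $p^\varepsilon-r^2$ you sketch is a natural route and is genuinely needed to upgrade the fixed point from a solution of \eqref{regularizedforp1} to one of \eqref{regularizedforp}.
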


\begin{proof} For simplicity, we suppress the $\varepsilon$--dependence in the proof.
Using the H\"{o}lder gradient bounds for the linear problem, we establish the existence results for
the nonlinear fixed boundary problem via the Schauder fixed point theorem.

For any function $w\in \mathcal{W}$, we define a mapping
\begin{equation} \label{eqmapping}
T:\, \mathcal{W}\subset C^{2}_{(-\gamma_1)}\rightarrow C^{2}_{(-\gamma_1)}
\end{equation}
by $Tw=p$,
where $p$ is the solution of the linear regularized fixed boundary value problem \eqref{linearforp}--\eqref{eqcond}
solved in Lemma \ref{le2-1}.
It is direct to see that $T$ maps $\mathcal{W}$ into a bounded set in $C^{2}_{(-\gamma_v)}$,
where $\gamma_{v}$ is determined in Lemma \ref{le2-1}.
Since $\gamma_{v}$ is independent of $\gamma_1$, we may take $\gamma_1=\frac{\gamma_v}{2}$
so that $T(\mathcal{W})$ is precompact in $C^{2}_{(-\gamma_1)}$.

Next, we show that $T$ maps $\mathcal{W}$ into itself.
First, $(W1)$ is satisfied by the boundary conditions and the maximum principle.
$(W2)$ is satisfied by the standard interior and boundary H\"{o}lder estimates for elliptic equations.
In order to prove that $p$ satisfies $(W3)$, it suffices to prove that there exists $K>0$ such that
\begin{equation}\label{eq314}
\sup\limits_{\sigma>0}\big(\sigma^{2-\gamma_1}\|p\|_{2,\overline{\Omega}\backslash (\Gamma(\sigma)\cup \Omega_{V}(\sigma))}\big)<K,
\end{equation}
under the condition that $\|w\|_{2}^{(-\gamma_1)}\leq K$.
Lemma  \ref{le3-3} implies that
$$
d^{2-\gamma_1}\|p\|_{2}\leq d^{1-\gamma_1+\mu}C \qquad\,\,\mbox{for all $d\leq d_0$},
$$
where $C$ depends on $K$, $\alpha_1$, and $\gamma_1$.
Moreover, by the interpolation inequality,
we can obtain
$$
d^{2-\gamma_1}\|p\|_2\leq K_V\qquad \text{for all $d<d_V$},
$$
where $\gamma_1=\frac{\gamma_V}{2}$, and $K_V$ is independent of $K$.
Therefore, we can choose sufficiently small $\hat{d}\leq \frac{\min\{d_0,d_V\}}{2}$ such that
$$
\hat{d}^{1-\gamma_1+\mu}C\leq K.
$$
For domain
$\overline{\Omega}\backslash \big(\Gamma(\sigma)\cup \Omega_{V}(\sigma)\big)$  with $\sigma>\hat{d}$,
the solution is smooth, and its $C^2$--norm bound is independent
of $K$ by the uniform H\"{o}lder estimate.
Therefore,  \eqref{eq314} is satisfied, and  parameters $K$, $K_0$, and $\alpha_0$
defining $\mathcal{W}$ have been chosen such that $T$ maps $\mathcal{W}$ into itself.

Finally, by the Schauder fixed point theorem, there exists a fixed point $p$ such that
$$
Tp=p\in C^{2}_{(-\gamma_1)}.
$$
Then $p$ is a solution as required.
\end{proof}

\subsection{Regularized nonlinear free boundary value problem}
We now prove the existence of a solution of the regularized free boundary problem \eqref{regularizedforp}, \eqref{eqrh}--\eqref{eqrh2},
and \eqref{eqcond}.
For each $r(\theta)\in \mathcal{R}^\varepsilon$,
using the solution, $p$, of the nonlinear fixed boundary problem given by Lemma 3.3,
we define the map, $J$,
on $\mathcal{R}^\varepsilon$:
\begin{equation} \label{mapping2}
\widetilde{r}=Jr:
=\begin{cases}
r_1+\int_{\theta_1}^{\theta}g(r(s),s,p(s,r(s)))\dd s \quad& \text{for $\theta\in[\frac{3\pi}{2},\theta_1)$},\\[5pt]
r_1+\int^{\theta_3}_{\theta}g(r(s),s,p(s,r(s)))\dd s \quad& \text{for $\theta\in(\theta_3,\frac{3\pi}{2}]$}.
\end{cases}
\end{equation}
First, we check that $J$ maps $\mathcal{R}^\varepsilon$ into itself.
Property (R1) follows from \eqref{mapping2}.
By the definition of $g$ and $r^2(\frac{3\pi}{2})=\overline{p}$,
property (R3) holds. In order to prove property (R2),
we need to make clear the position of the diffracted shock as a free boundary.

There are three possibilities for the position of the diffracted shock $\Gamma_{\rm shock}$:
\begin{enumerate}
\item[(i)] $r_2<r(\theta)\leq r_1$ for all $\theta \in [\theta_3,\theta_{1}]$;

\smallskip
\item[(ii)] There exists $\theta^*>0$ such that $r(\theta^*)=r_2$ for all $\theta\in [\frac{3\pi}{2}-\theta^*,\frac{3\pi}{2}+\theta^*]$,
and $r_2<r(\theta)$ for all $\theta\in[\theta_{3},\frac{3\pi}{2}-\theta^*)\cup(\frac{3\pi}{2}+\theta^*,\theta_1]$;

\smallskip
\item[(iii)] $r(\frac{3\pi}{2})=r_2$, and $r_2<r(\theta)\leq r_1$ for all $\theta\in[\theta_{3},\theta_{1}]\backslash\{\frac{3\pi}{2}\}$,
where $r_i=\sqrt{p_i}$,  and $\theta_i$ are the $\theta-$coordinates of point $P_i$,  $i=1,2,3$.
\end{enumerate}
Let
$$
d=\text{dist}\{C_2,\Gamma_{\rm shock}\}=|OP_2|-r_2.
$$

\begin{proposition}\label{le4.1}
Let $(p,r)$ be the solution of the regularized boundary problem  \eqref{regularizedforp}, \eqref{eqrh}--\eqref{eqrh2},
and \eqref{eqcond}.
Then $d\geq\delta>0$ for some constant $\delta>0$ depending on the Riemann initial data, when $\varepsilon>0$ is sufficiently small.
This means that cases {\rm (ii)}--{\rm (iii)} do not occur, so that the
diffracted shock does not meet the sonic circle $\{r=r_2\}$.
\end{proposition}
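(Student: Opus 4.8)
The plan is to translate the geometric claim $d\ge\delta$ into a pointwise lower bound for $p$ at $P_2$ and then to produce that bound by a comparison (barrier) argument for the regularized elliptic problem. Since $P_2$ lies on $\Gamma_{\rm shock}$ at $\theta=\tfrac{3\pi}{2}$ where $r'=0$, the shock relation forces $r^2(\tfrac{3\pi}{2})=\overline p(P_2)=\tfrac12\big(p(P_2)+p_2\big)$, so that
\[
d=r(\tfrac{3\pi}{2})-r_2,\qquad r^2(\tfrac{3\pi}{2})=\tfrac12\big(p(P_2)+p_2\big).
\]
Hence $d>0$ is equivalent to $p(P_2)>p_2$, that is, to strict subsonicity $p(P_2)>r^2(P_2)$ at $P_2$; and cases (ii)--(iii) are precisely the configurations in which $p\equiv p_2$ on a shock arc through $P_2$, or $p(P_2)=p_2$ (equivalently $P_2\in C_2$, where the flow is sonic). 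It therefore suffices to establish a quantitative gap $p(P_2)\ge p_2+2\delta$, uniform for all small $\varepsilon$.

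First I would record the non-quantitative interior positivity. The constant $p_2$ is itself a solution of $Q^{\varepsilon,+}\cdot=0$, so the difference $\phi=p-p_2\ge0$ solves a linear homogeneous elliptic equation; since $\phi=p_1-p_2>0$ on $\Gamma_{\rm sonic}$, the strong maximum principle (in the form valid for a zero extremum, without a sign condition on the zeroth-order coefficient) gives $\phi>0$, i.e. $p>p_2$, throughout the open domain $\Omega$. Thus the bound can only fail on $\Gamma_{\rm shock}$ near $P_2$, and the core step is to build an explicit lower barrier (subsolution) $w$ in a fixed neighbourhood of $P_2$ in $\overline\Omega$ with $Q^{\varepsilon,+}w\ge0$, with $w\le p$ on the interior part of the neighbourhood boundary, and with the correct sign of the oblique derivative $\mathcal Bw=\beta_1w_r+\beta_2w_\theta$ on $\Gamma_{\rm shock}$, so that comparison yields $p\ge w$ and hence $p(P_2)\ge w(P_2)\ge p_2+2\delta$. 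A natural building block is the radial profile $w=p_1-M(r_1-r)$ with $M\in[\,r_1,\,r_1+r_2)$: by the arithmetic--geometric inequality $\tfrac{w}{r}+\tfrac{r^2M}{w}\ge2\sqrt{rM}\ge2r$ it is a subsolution, it matches $p=p_1$ on $\Gamma_{\rm sonic}$, and it already gives the strictly supersonic gap $w(r_2)=r_1r_2>r_2^2=p_2$.

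The delicate point, where the degeneracy at $P_2$ enters, is exactly the oblique condition: at $P_2$ one has $\beta_1=0$ (since $r'(\tfrac{3\pi}{2})=0$) and $\beta_2=-[p]/2\overline p$, which itself vanishes in the touching case $p(P_2)=p_2$, while the obliqueness $\mu=-2r'(1-\overline p/p)$ changes sign across $P_2$, and $\beta_1$ changes sign along $\Gamma_{\rm shock}$ as well. Consequently the purely radial barrier does not keep $\mathcal Bw$ of one sign, and must be corrected near $P_2$. To bypass this I would localize and use a barrier of the exponential type $w=\hat p-C\big(1-e^{-l(\theta-\frac{3\pi}{2})}\big)$ already employed in Lemma~\ref{le2-1}, now from below, choosing $C,l$ so that $Q^{\varepsilon,+}w\ge0$ and $\mathcal Bw$ has the sign compatible with the inward oblique vector on each side of $P_2$ (using that $(\beta_1,\beta_2)\cdot\boldsymbol\nu>0$ for $\theta<\tfrac{3\pi}{2}$ and $<0$ for $\theta>\tfrac{3\pi}{2}$, as recorded in Lemma~\ref{le2-1}); then Hopf's lemma excludes a boundary maximum of $w-p$ on $\Gamma_{\rm shock}\setminus\{P_2\}$, and together with the interior principle this rules out cases (ii)--(iii). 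The regularization $\varepsilon\Delta_{(\xi,\eta)}$ only strengthens ellipticity and enters the subsolution inequalities with a favourable sign, so $\delta$ can be taken independent of $\varepsilon$ for $\varepsilon$ small; alternatively one may argue by contradiction, supposing $d_\varepsilon\to0$ along $\varepsilon\to0$, passing to a limiting solution whose shock touches $C_2$, and contradicting the strict subsonicity of the $\varepsilon=0$ problem.

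The main obstacle I anticipate is precisely this simultaneous collapse at $P_2$ of both the ellipticity of $Q$ (since $p=r^2$ there in the touching case) and the obliqueness of $\mathcal B$ (both $\beta_1,\beta_2\to0$), compounded by the fact that the shock ODE $r'=\pm r\sqrt{(r^2-\overline p)/\overline p}$ is only Hölder---not Lipschitz---in $r$ at $r^2=\overline p$, so that ODE uniqueness cannot be invoked to forbid a \emph{tangential} contact of $\Gamma_{\rm shock}$ with $C_2$. Everything therefore hinges on constructing a barrier whose oblique derivative has the right sign on both sides of the degenerate point; once that is in hand, the comparison principle closes the argument.
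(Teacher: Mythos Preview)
Your reformulation is correct: $d>0$ is equivalent to $p(P_2)>p_2$, and the task reduces to a uniform lower bound for $p-p_2$ near $P_2$. However, the barrier/comparison scheme you outline does not close, precisely at the point you yourself flag. In the putative touching configurations (ii)--(iii), on the arc where $r=r_2$ one has $r'=0$ and hence $p=p_2$; there $[p]=0$, so \emph{both} $\beta_1$ and $\beta_2$ vanish and the oblique operator $\mathcal B$ degenerates completely, not merely losing uniform obliqueness. No Hopf-type argument is available on that arc. Moreover, the exponential profile from Lemma~\ref{le2-1} flipped to a subsolution has $w_r=0$, $w_\theta<0$, so $\mathcal Bw=\beta_2 w_\theta$; near $P_2$ one has $\beta_2\approx -[p]/(2\overline p)<0$, giving $\mathcal Bw>0$ on \emph{both} sides of $\theta=\tfrac{3\pi}{2}$, which is the wrong sign on $\{\theta<\tfrac{3\pi}{2}\}$ where $(\beta_1,\beta_2)\cdot\boldsymbol\nu>0$. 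Your radial barrier has $\mathcal Bw=\beta_1 M$ changing sign with $r'$, so neither piece supplies the one-sided inequality needed for comparison. The compactness alternative you mention is circular: it presupposes the strict gap for the $\varepsilon=0$ limit, which is the very statement to be proved.

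The paper bypasses the oblique boundary entirely. It argues by contradiction, assuming case~(ii) holds, and works in an \emph{interior} neighbourhood $\mathcal N$ of a touching point $(r_2,\theta_0)$. With two-sided barriers of the form
\[
p_2+A(r_2-r)^{1/2}\pm B(r_2-r)^{\beta}\pm D(\theta-\theta_0)^2,\qquad \tfrac12<\beta<1,
\]
and the interior maximum principle (no Hopf lemma is needed, since on $\partial\mathcal N\cap\{r=r_2\}$ the touching hypothesis gives the Dirichlet value $p=p_2$), it pins down the sharp rate $p-p_2\asymp(r_2-r)^{1/2}$ in $\mathcal N$. A parabolic rescaling $u(S,T)=S^{-1/5}\varphi(S^{-12/5},\theta_0+S^{-14/5}T)$ then converts \eqref{eqforp2} into a uniformly elliptic equation for a positive function on rectangles of unbounded aspect ratio, and the Harnack inequality forces $a\,x_0^{7/5}\le C A\,x_0^{14/5}$, i.e.\ $x_0^{-7/5}\le K$, for arbitrarily small $x_0$---a contradiction. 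The mechanism is that the $C^{1/2}$ growth is too slow to be compatible with Harnack under this scaling. This optimal-rate barrier plus rescaled-Harnack argument is what replaces the boundary comparison you were attempting, and is the missing idea in your proposal.
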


\begin{proof}
We first prove (ii) via the method of contradiction.
For $\theta_0\in [\frac{3\pi}{2}-\theta^*,\frac{3\pi}{2}+\theta^*]$, let  $\mathcal{N}$ be a small interior neighborhood  of
the point $(r_2, \theta_0)\in \Gamma_{\rm shock}$.
We now prove that the optimal regularity of $p$ in $\mathcal{N}$ near $\Gamma_{\rm shock}$ is $C^{1/2}$.

\medskip
1. We introduce the barrier function
$$
w(r,\theta)=p_2+A_1(r_2-r)^{\frac{1}{2}}-B_1(r_2-r)^{\beta}+D_1(\theta-\theta_0)^2,$$
where $A_1,B_1,D_1>0$, and $\beta\in(\frac{1}{2},1)$ will be specified later. Let
\begin{equation}
Qw:=(p-r^2+\varepsilon)w_{rr}+\frac{p+\varepsilon}{r^2}w_{\theta\theta}
+\frac{p+\varepsilon}{r}w_r+\frac{1}{p}(rw_r)^2-2rw_r
\end{equation}
and let $\hat{Q}w$ be obtained by replacing the coefficient of $w_{rr}$ in $Qw$ by $w-r^2$, that is,
$$
\hat{Q}w:=Qw+(w-p-\epsilon)w_{rr}.
$$
A direct calculation yields
\begin{equation*}
\begin{split}
\hat{Q}w=&\Big((\frac{1}{4}-\beta^2)A_1B_1(r_2-r)^{\beta-\frac{3}{2}}+O_1\Big)-\frac{A_1^2}{4p}(p-r^2)(r_2-r)^{-1}\\
&+\left(B_1^2\beta(2\beta-1)(r_2-r)^{2\beta-2}+O_2\right)+\Big(-\frac{A_1D_1}{4}(r_2-r)^{-\frac{3}{2}}(\theta-\theta_0)^2
+O_3\Big),
\end{split}
\end{equation*}
where
\begin{align*}
O_1&=\frac{p-r^2}{p}A_1B_1\beta(r_2-r)^{\beta-\frac{3}{2}}+\frac{A_1(2r^2-p)}{2r}(r_2-r)^{-\frac{1}{2}}+\frac{p-2r^2}{r}B_1\beta(r_2-r)^{\beta-1}\\
&\quad-(p_2-r^2)\Big(\frac{A_1}{4}(r_2-r)^{-\frac{3}{2}}+B_1\beta(\beta-1)(r_2-r)^{\beta-2}\Big)+\frac{2pD_1}{r^2},\\
O_2&=-\frac{B_1^2\beta^2}{p}(p-r^2)(r_2-r)^{2\beta-2},\\
O_3&=B_1D_1\beta(1-\beta)(r_2-r)^{\beta-2}(\theta-\theta_0)^2.
\end{align*}

Notice that there exists $\alpha\in(0,\frac{1}{2})$ such that $p-r^2\leq (r_2-r)^{\alpha}$. Then
$$
\left|-\frac{A_1^2}{4p}(p-r^2)(r_2-r)^{-1}\right|\leq C(p_1,p_2)A_1^2(r_2-r)^{\alpha-1}.
$$

Choose $\beta$ such that $\beta-\frac{3}{2}<\alpha-1$, {\it i.e.}, $\alpha>\beta-\frac{1}{2}$ so that
$$
(\beta^2-\frac{1}{4})A_1B_1(r_2-r)^{\beta-\frac{3}{2}}>3C(p_1,p_2)A_1^2(r_2-r)^{\alpha-1},
$$
which implies
\begin{equation}
\label{eq3.2}
B_1>A_1C(p_1,p_2,\beta)(r_2-r)^{\alpha-\beta+\frac{1}{2}}.
\end{equation}
On the other hand, if $r_2-r$ is small enough,
$$3B_1^2\beta(2\beta-1)(r_2-r)^{2\beta-2}<(\beta^2-\frac{1}{4})A_1B_1(r_2-r)^{\beta-\frac{3}{2}},$$
which implies
\begin{equation}
\label{eq3.3}
A_1>\frac{3\beta(2\beta-1)B_1}{\beta^2-\frac{1}{4}}(r_2-r)^{\beta-\frac{1}{2}}.
\end{equation}
We can choose $A_1$ and $D_1$ such that
\begin{equation}
\label{eq3.4}
w(r,\theta)>p_2+\frac{A_1}{2}(r_2-r)^{\frac{1}{2}}+D_1(\theta-\theta_0)^2>p \qquad \text{on $\partial\mathcal{N}\setminus\{r=r_2\}$}.
\end{equation}
Moreover, we see that
\begin{equation}
\label{eq3.4b}
w(r,\theta)=p_2+D_1(\theta-\theta_0)^2\ge p \qquad \text{on $\partial\mathcal{N}\cap \{r=r_2\}$},
\end{equation}
which implies that
\begin{equation}
\label{eq3.4c}
w(r,\theta)\ge p \qquad \text{on $\partial\mathcal{N}$}.
\end{equation}

On the other hand, we take $B_1$ sufficiently small such that \eqref{eq3.2}--\eqref{eq3.3} hold. Finally we have
$$
\hat{Q}w<0 \qquad \text{in $\mathcal{N}$}.
$$
Moreover, for sufficiently small $r_2-r$,
$$
\partial_{rr}w(r,\theta)=-\frac{1}{4}A_1(r_2-r)^{-\frac{3}{2}}+B_1\beta(1-\beta)(r_2-r)^{\beta-2}<0.
$$

Now assume that there exists a non-empty open subset $\mathcal{N}_1=\{(r,\theta)\in\mathcal{N},\,p>w\}\subset\mathcal{N}$.
By the continuity of function $p-w$, there exist a maximum point $P_*\in \mathcal{N}_1$ and
a small neighborhood $\hat{\mathcal{N}}_1\subset \mathcal{N}_1$ such that
$$
p-w>\epsilon \qquad \mbox{on $\hat{\mathcal{N}}_1$},
$$
when $\varepsilon>0$ is sufficiently small.
Then
\begin{equation}\label{3.5a}
Q(p-w)=Qp-\hat{Q}w +(\hat{Q}w-Qw)=-\hat{Q}w + (w-p-\varepsilon)w_{rr}\geq -\hat{Q}w>0 \quad \text{in $\hat{\mathcal{N}}_1$};
\end{equation}
in particular,
\begin{equation}\label{3.5b}
Q(p-w)(P_*)>0 \qquad \text{in $\hat{\mathcal{N}}_1$}.
\end{equation}
On the other hand, at the maximum point $P_*\in \hat{\mathcal{N}}_1$, the structure of operator $Q$ leads to
$$
Q(p-w)(P_*) \leq 0,
$$
which is a contradiction to \eqref{3.5b}, so that the open subset $\mathcal{N}_1\subset \mathcal{N}$ must be empty.
Thus, $p\leq w$ in $\mathcal{N}$.

\medskip
2. Next we take
$$
v(r,\theta)=p_2+A_2(r_2-r)^{\frac{1}{2}}+B_2(r_2-r)^{\beta}-D_2(\theta-\theta_0)^2,
$$
where $A_2,B_2,D_2>0$ and $\frac{1}{2}<\beta<1$. Then
\begin{equation*}
\begin{split}
\hat{Q}v=&\Big((\beta^2-\frac{1}{4})A_2B_2(r_2-r)^{\beta-\frac{3}{2}}+\overline{O}_1\Big)-\frac{A_2^2}{4p}(p-r^2)(r_2-r)^{-1}\\
&+\left(B_2^2\beta(2\beta-1)(r_2-r)^{2\beta-2}+\overline{O}_2\right)+
\Big(\frac{A_1D_1}{4}(r_2-r)^{-\frac{3}{2}}(\theta-\theta_0)^2
+\overline{O}_3\Big),
\end{split}
\end{equation*}
where
\begin{align*}
\overline{O}_1&=-\frac{p-r^2}{p}A_2B_2\beta(r_2-r)^{\beta-\frac{3}{2}}
+\frac{A_2(2r^2-p)}{2r}(r_2-r)^{-\frac{1}{2}}+\frac{2r^2-p}{r}B_2\beta(r_2-r)^{\beta-1}\\
&\quad-(p_2-r^2)\Big(\frac{A_2}{4}(r_2-r)^{-\frac{3}{2}}+B_2\beta(1-\beta)(r_2-r)^{\beta-2}\Big)-\frac{2pD_2}{r^2},\\
\overline{O}_2&=-\frac{B_2^2\beta^2}{p}(p-r^2)(r_2-r)^{2\beta-2},\\
\overline{O}_3&=-B_2D_2\beta(1-\beta)(r_2-r)^{\beta-2}(\theta-\theta_0)^2.
\end{align*}
Similarly,
there exists $\alpha\in(0,\frac{1}{2})$ such that $p-r^2\leq (r_2-r)^{\alpha}$. Then
$$
\left|-\frac{A_2^2}{4p}(p-r^2)(r_2-r)^{-1}\right|\leq C(p_1,p_2)A_2^2(r_2-r)^{\alpha-1}.
$$

Now we choose $\beta$ such that $\beta-\frac{3}{2}<\alpha-1$, {\it i.e.}, $\alpha>\beta-\frac{1}{2}$. Then
$$
(\beta^2-\frac{1}{4})A_2B_2(r_2-r)^{\beta-\frac{3}{2}}>3C(p_1,p_2)A_2^2(r_2-r)^{\alpha-1},
$$
which implies
\begin{equation}\label{eq4.4}
B_2>A_2C(p_1,p_2,\beta)(r_2-r)^{\alpha-\beta+\frac{1}{2}}.
\end{equation}
On the other hand, if $r_2-r$ is small enough,
$$
3B_2^2\beta(2\beta-1)(r_2-r)^{2\beta-2}<(\beta^2-\frac{1}{4})A_2B_2(r_2-r)^{\beta-\frac{3}{2}},
$$
which implies
\begin{equation}
\label{eq4.5}
A_2>\frac{3\beta(2\beta-1)B_2}{\beta^2-\frac{1}{4}}(r_2-r)^{\beta-\frac{1}{2}}.
\end{equation}
Let $D_2$ be large enough such that $p>v$ for some
$\theta=\theta_a,\theta_b$ so that $\theta_0\in (\theta_a, \theta_b)$. We choose $\widetilde{r}<r_2$ such that
\begin{align*}
p&>p_2+2A_2(r_2-\widetilde{r})^{\frac{1}{2}}-D_2(\theta-\theta_0)^2\\
&\geq p_2+A_2(r_2-\widetilde{r})^{\frac{1}{2}}+B_2(r_2-\widetilde{r})^{\beta}-D_2(\theta-\theta_0)^2=v(\tilde{r}, \theta),
\end{align*}
where the second inequality holds, provided that $\frac{B_2}{A_2}\leq (r_2-\widetilde{r})^{\frac{1}{2}-\beta}$.
Moreover,
$$
(p-v)|_{\{r=r_2\}}=D_2(\theta-\theta_0)^2\ge 0     \qquad\text{for $\theta\in (\theta_a, \theta_b)$},
$$
which implies that
$$
(p-v)|_{\partial\tilde{\mathcal{N}}}\ge 0,
$$
where $\tilde{\mathcal{N}}=\{r\in (\tilde{r}, r_2), \, \theta\in (\theta_a, \theta_b)\}$.

On the other hand,
we take $B_2$ sufficiently small such that \eqref{eq4.4}--\eqref{eq4.5} hold. Then we  derive
$$
\hat{Q}v>0  \qquad \text{in $\tilde{\mathcal{N}}$}.
$$
Moreover, in $\tilde{\mathcal{N}}$,
$$
\partial_{rr}v(r,\theta)=-\frac{1}{4}A_1(r_2-r)^{-\frac{3}{2}}+B_1\beta(1-\beta)(r_2-r)^{\beta-2}<0.
$$

Now assume that there exists a non-empty open subset $\mathcal{N}_2=\{(r,\theta)\in\tilde{\mathcal{N}},p<v\}\subset \tilde{\mathcal{N}}$.
By the continuity of function $p-v$, there exist a minimum point $P_{**}\in \mathcal{N}_2$ and
a small neighborhood $\hat{\mathcal{N}}_2\subset \mathcal{N}_2$ such that
$$
p-v<-\epsilon \qquad \mbox{on $\hat{\mathcal{N}}_2$}
$$
when $\varepsilon>0$ is sufficiently small.
Then
\begin{equation}\label{3.6a}
Q(p-v)=Qp-\hat{Q}v +(\hat{Q}v-Qv)=-\hat{Q}v + (v-p-\varepsilon)v_{rr}\leq  -\hat{Q}v<0 \quad \text{in $\hat{\mathcal{N}}_2$};
\end{equation}
in particular,
\begin{equation}\label{3.6b}
Q(p-v)(P_{**})<0 \qquad \text{in $\hat{\mathcal{N}}_2$}.
\end{equation}
On the other hand, at the minimum point $P_{**}\in \hat{\mathcal{N}}_2$, the structure of operator $Q$ leads to
$$
Q(p-v)(P_{**}) \geq 0,
$$
which is a contradiction to \eqref{3.6b}, so that the open subset $\mathcal{N}_2\subset \tilde{\mathcal{N}}$ must be empty.
Thus, $p\geq v$ in $\tilde{\mathcal{N}}$.

\medskip
3. Combining Steps 1--2,  we conclude that
$$
a(r_2-r)^{\frac{1}{2}}<p-p_2<A(r_2-r)^{\frac{1}{2}} \qquad \text{in $\mathcal{N}$}
$$
for some constants $a,A>0$, so that the optimal regularity of $p$  near the sonic circle $C_2$ is  $C^{\frac{1}{2}}$.

\medskip
4. Next we introduce the coordinates $(x,y)=(r_2-r,\theta)$ and denote $\varphi:=p-p_2$. We scale $\varphi$
in $\mathcal{N}$ such that
$$u(S,T)=\frac{1}{S^{1/5}}\varphi(S^{-\frac{12}{5}},y_0+S^{-\frac{14}{5}}T) \qquad \text{for}\,\,
(S^{-\frac{12}{5}},y_0+S^{-\frac{14}{5}}T)\in \mathcal{N}.$$
From the optimal regularity, it follows that
$$0<a\leq S^{\frac{7}{5}}u\leq A$$
for some constants $a,A>0$. From \eqref{eqforp2}, we obtain the governing equation for $u$ in the $(S,T)$--coordinates
\begin{equation}
\label{eq4.7}
Qu=a_{11}u_{SS}+a_{12}u_{ST}+a_{22}u_{TT}+b_{1}u_S+b_{2}u_T+c_{1}u+\text{h.o.t}=0,
\end{equation}
where
\begin{align*}
&a_{11}=S^{\frac{7}{5}}u+2r_2S^{-\frac{6}{5}}-S^{-\frac{18}{5}},\\
&a_{12}=\frac{28T}{5S}\big(S^{\frac{7}{5}}u+2r_2S^{-\frac{6}{5}}-S^{-\frac{18}{5}}\big),\\
&a_{22}=\frac{144}{25}+\frac{196T^2}{S^2}\big(S^{\frac{7}{5}}u+2r_2S^{-\frac{6}{5}}-S^{-\frac{18}{5}}\big),\\
&b_{1}=\frac{19}{5S}\big(S^{\frac{7}{5}}u+2r_2S^{-\frac{6}{5}}-S^{-\frac{18}{5}}\big)+
\frac{12M}{5S^{\frac{11}{5}}}\Big(\frac{S^{\frac{1}{5}}u+p_2}{M^2}-2\Big)=:\hat{b}_{1}S^{-1},\\
&b_{2}=\frac{294T}{25S^{2}}\big(S^{\frac{7}{5}}u+2r_2S^{-\frac{6}{5}}-S^{-\frac{18}{5}}\big)+
\frac{168MT}{25S^{\frac{16}{5}}}\Big(\frac{S^{\frac{1}{5}}u+p_2}{M^2}-2\Big)=:\hat{b}_{2}S^{-2}T,\\
&c_{1}=\frac{13}{25S^{2}}\big(S^{\frac{7}{5}}u+2r_2S^{-\frac{6}{5}}-S^{-\frac{18}{5}}\big)
+\frac{12M}{25S^{\frac{16}{5}}}\Big(\frac{S^{\frac{1}{5}}u+p_2}{M^2}-2\Big)=:\hat{c}_{1}S^{-2},\\
&\text{h.o.t}=\frac{M^2S^{\frac{2}{5}}}{S^{\frac{1}{5}}u+p_0}\Big(S(u_S)^2
+\frac{196}{25}\frac{T^2}{S}(u_T)^2
+\frac{1}{25}\frac{u^2}{S}+\frac{28}{5}Tu_Su_T+\frac{2}{5}uu_S+\frac{28}{25}\frac{T}{S}uu_T\Big),
\end{align*}
with $M:=r_0-S^{-\frac{12}{5}}$.
According to the optimal regularity, we have
$$
0< C^{-1}\leq a_{11},a_{12},a_{22},\hat{b}_{1},\hat{b}_{2},\hat{c}_1\leq C, \qquad
\text{h.o.t.}\rightarrow 0,
$$
if $S^{-1}$ and $T$ are sufficiently small. Moreover, the eigenvalues of \eqref{eq4.7} are positive and bounded
so that \eqref{eq4.7} is uniformly elliptic for $u$ in the $(S,T)-$coordinates.

Let $x_0^{-1}<S<x_0^{-2}$ with $x_0$ small enough. Then, using Theorem $8.20$ in \cite{gilbarg2015elliptic},
we conclude
\begin{align*}
ax_0^{\frac{7}{5}}&\leq u(x_0^{-1},0)\leq \sup_{x_0^{-1}\leq S \leq x_0^{-2}}u(S,T)\\
&\leq C\inf_{x_0^{-1} \leq S \leq x_0^{-2}}u(S,T)\leq Cu(x_0^{-2},0)\leq CAx_{0}^{\frac{14}{5}},
\end{align*}
where $C$ is independent of $x_0$.
This implies that $x_0^{-\frac{7}{5}}\leq K$ for a bounded constant $K>0$, which is a contradiction if
$x_0$ is sufficiently small.
Therefore, case (ii) can not occur.

It can be proved the impossibility of case (iii) similarly, so we omit the proof here. This completes the proof.
\end{proof}

In order to use the Schauder fixed point theorem, we need further to prove that map $J$
is compact and continuous on $\mathcal{R}^\varepsilon$.
Evaluating $g(r,\theta,p)$, we obtain the bound:
$$
\|g\|_{\gamma_V/2}\leq C(K_1),
$$
so that
$$
\|\widetilde{r}\|_{1+\gamma_V/2}\leq C(K_1),
$$
where $\gamma_{V}$ is independent of $\alpha_1$,
the H\"{o}lder exponent of space $\mathcal{R}^\varepsilon$.
Thus, $J(\mathcal{R}^\varepsilon)\subset C^{1+\frac{\gamma_V}{2}}$, and
$J(\mathcal{R}^\varepsilon)\subset \mathcal{R}^\varepsilon$ if $\alpha_1\leq \frac{\gamma_V}{2}$.
We take $\alpha_1=\frac{\gamma_V}{3}$ to guarantee that $J$ is compact.
Furthermore, assume that
$r_m,r\in \mathcal{R}^\varepsilon$ and $r_m\rightarrow r$ as $m\rightarrow \infty$.
Assume that $p_m$ is the solution of the nonlinear fixed boundary problem with shock $\Gamma_{\rm shock}$
defined by $r_m$ for each $m$. By the standard argument ({\it cf.} \cite{canic2006free}),
we obtain that $p_m\rightarrow p$, which solves
the problem for $r$.
Then
$$
g(r_m(\theta),\theta,p(r_m(\theta),\theta))\rightarrow g(r(\theta),\theta,p(r(\theta),\theta))
\qquad \mbox{as $m\rightarrow \infty$},
$$
which implies that $Jr_m\rightarrow Jr$ as $m\rightarrow \infty$.
Therefore, the Schauder fixed point theorem implies that $J$ has a fixed point
$r^\varepsilon\in C^{1+\frac{\gamma_V}{3}}([\theta_3,\theta_1])$.

Together with the corresponding solution $p^\varepsilon$,
we conclude  the existence of a solution
$(p^\varepsilon,r^{\varepsilon})\in
C^{2,\alpha}_{(-\gamma)}(\Omega^\varepsilon)\times C^{1,\alpha_1}([\theta_3,\theta_1])$
of the regularized free boundary problem \eqref{regularizedforp}, \eqref{eqrh}--\eqref{eqrh2},
and \eqref{eqcond}.

\subsection{Existence of solutions of Problem \ref{problem}}

In this section, we prove that the limit of $(p^\varepsilon,r^\varepsilon)$ as $\varepsilon\rightarrow 0$ is
a solution of Problem \ref{problem}.

\begin{lemma}
There exists a positive function $\phi$, independent of $\varepsilon$, such that
\begin{equation}
p^\varepsilon-(\xi^2+\eta^2)\geq \phi \qquad\, \text{in $\overline{\Omega}\backslash \Gamma_{\rm sonic}$},
\end{equation}
and $\phi\rightarrow 0$ as $\emph{dist}((\xi,\eta),\Gamma_{\rm sonic})\rightarrow 0$.
\end{lemma}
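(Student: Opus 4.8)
The plan is to work with the ellipticity function $\psi^{\varepsilon}:=p^{\varepsilon}-r^{2}$ and to bound it below by an explicit, $\varepsilon$-independent barrier that vanishes only on $\Gamma_{\rm sonic}$. First I would record the boundary behaviour of $\psi^{\varepsilon}$. By Lemma \ref{nonfixed}, $\psi^{\varepsilon}>0$ in $\Omega$, while $p^{\varepsilon}=p_{1}=r_{1}^{2}=r^{2}$ on $\Gamma_{\rm sonic}$ forces $\psi^{\varepsilon}=0$ there; on $\Gamma_{\rm shock}$ the Rankine--Hugoniot identity $r^{2}(\theta)=\overline{p}=\tfrac12(p^{\varepsilon}+p_{2})$ together with Proposition \ref{le4.1} ($p^{\varepsilon}\ge p_{2}+\delta$) yields $\psi^{\varepsilon}=\tfrac12(p^{\varepsilon}-p_{2})\ge\tfrac{\delta}{2}$, uniformly in $\varepsilon$.

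The structural heart is to substitute $p^{\varepsilon}=r^{2}+\psi^{\varepsilon}$ into $Q^{\varepsilon}p^{\varepsilon}=0$. Since $Q^{\varepsilon}(r^{2})=2r^{2}+4\varepsilon$, a direct computation shows that $\psi^{\varepsilon}$ (abbreviated $\psi$, with $p=p^{\varepsilon}$) satisfies
\begin{equation*}
(\psi+\varepsilon)\psi_{rr}+\frac{p+\varepsilon}{r^{2}}\psi_{\theta\theta}
+\Big(\frac{p+\varepsilon}{r}-2r+\frac{4r^{3}}{p}\Big)\psi_{r}
+\frac{r^{2}\psi_{r}^{2}}{p}
=-\frac{4\psi^{2}}{p}-2r^{2}-4\varepsilon .
\end{equation*}
The decisive fact is that the right-hand side is $\le-2r^{2}\le-2(r_{2}+\delta)^{2}<0$, so $\psi^{\varepsilon}$ is a strict supersolution of a degenerate quasilinear elliptic operator whose forcing is bounded away from zero uniformly in $\varepsilon$. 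Evaluating this operator at a would-be interior minimum of $\psi^{\varepsilon}$ (where $\psi_{r}=\psi_{\theta}=0$, $\psi_{rr}\ge0$, $\psi_{\theta\theta}\ge0$) makes the left-hand side $\ge0$ while the right-hand side is $<0$; hence $\psi^{\varepsilon}$ has no interior minimum and attains its minimum on $\partial\Omega$.

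I would then split $\Omega$, in the flattened variable $x=r_{1}-r$, into a thin sonic collar $\mathcal{C}=\{0\le x\le h\}\cap\Omega$ and its complement. On $\mathcal{C}$ I take the $\varepsilon$-independent barrier $\Psi=r^{2}+\phi_{0}$ with $\phi_{0}(x)=c\,x(h-x)$, where $c>2$ and $h$ is small. This downward parabola vanishes at both $x=0$ (sonic) and $x=h$, is positive in between, and has $\phi_{0}''=-2c$, so $\Psi_{rr}=2-2c<0$ and $\Psi$ carries \emph{no} $\varepsilon$-singularity (this is exactly why a merely bounded, quadratic correction is used rather than a fractional one). Choosing $h<\tfrac{2r_{1}}{3c}$ bounds the leading part of $Q^{\varepsilon}\Psi$ on $\mathcal{C}$ below by $2r_{1}^{2}-3r_{1}ch+O(h^{2})+O(\varepsilon)>0$, making $\Psi$ a strict subsolution. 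A comparison of $\Psi$ with $p^{\varepsilon}$ then applies: at a putative interior maximum of $\Psi-p^{\varepsilon}$ the gradients coincide, $q:=\Psi_{r}=p^{\varepsilon}_{r}$, and the terms proportional to $\Psi-p^{\varepsilon}>0$ combine to $(\Psi-p^{\varepsilon})\big((2-2c)+\tfrac{q}{r}\big)\approx(\Psi-p^{\varepsilon})(4-2c)<0$, while $(\tfrac1\Psi-\tfrac1{p^{\varepsilon}})(rq)^{2}\le0$ and the coefficient mismatches $(p^{\varepsilon}-r^{2}+\varepsilon)(\Psi_{rr}-p^{\varepsilon}_{rr})$ and $\tfrac{p^{\varepsilon}+\varepsilon}{r^{2}}(\Psi_{\theta\theta}-p^{\varepsilon}_{\theta\theta})$ are $\le0$ (the same sign structure of $Q^{\varepsilon}$ exploited in Proposition \ref{le4.1}). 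This contradicts $Q^{\varepsilon}\Psi>0$, so $\psi^{\varepsilon}\ge\phi_{0}$ on $\mathcal{C}$; in particular $\psi^{\varepsilon}\ge\phi_{0}(h/2)=ch^{2}/4$ on $\{x=h/2\}$. Applying the minimum principle of the previous paragraph on $\{x>h/2\}\cap\Omega$, whose boundary carries $\psi^{\varepsilon}\ge\min\{ch^{2}/4,\delta/2\}$, propagates a uniform positive bound into the interior. Gluing $\phi_{0}$ on $\{x\le h/2\}$ to the constant $\min\{ch^{2}/4,\delta/2\}$ beyond then yields the desired $\phi$: positive off $\Gamma_{\rm sonic}$, independent of $\varepsilon$, and vanishing as $\mathrm{dist}(\cdot,\Gamma_{\rm sonic})\to0$.

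The main obstacle is the collar estimate: the leading coefficient $\psi^{\varepsilon}+\varepsilon$ degenerates on $\Gamma_{\rm sonic}$, so both the subsolution inequality for $\Psi$ and the comparison must be verified precisely where this coefficient is small, with every constant ($c$, $h$, and the admissible range $\varepsilon\in(0,\varepsilon_{0})$) chosen uniformly in $\varepsilon$. A secondary difficulty is the two corners $P_{1}$ and $P_{3}$, where $\mathcal{C}$ meets $\Gamma_{\rm shock}$ and the boundary condition changes type: there both $\psi^{\varepsilon}$ and $\phi_{0}$ vanish, and the inequality $\phi_{0}\le\psi^{\varepsilon}$ on $\partial\mathcal{C}$ must be secured through the weighted corner estimates of Lemmas \ref{le2-1}--\ref{nonfixed}.
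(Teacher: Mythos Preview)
Your global collar scheme differs from the paper's route, which constructs, for each $X_0\in\Omega$, a \emph{local} lower barrier $\phi=\delta_0\big(1-|X-X_0|^2/R^2\big)^{\tau}$ on a ball $B_R(X_0)$ chosen with $B_R(X_0)\cap\Gamma_{\rm sonic}=\emptyset$, letting the amplitude $\delta_0\to 0$ as $X_0\to\Gamma_{\rm sonic}$. Your structural identity for $\psi^\varepsilon$ (with strictly negative right-hand side $\le-2r^2$) is correct, and the interior minimum-principle step is sound.

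However, the boundary claim on $\Gamma_{\rm shock}$ is wrong, and this is a genuine gap. The shock ODE \eqref{shockeq} gives $(r')^2=\overline p^{\,-1}r^2(r^2-\overline p)$, so $r^2(\theta)\ge\overline p$ everywhere on the shock, with equality only at $P_2$. Hence $\psi^\varepsilon=p^\varepsilon-r^2\le\tfrac12(p^\varepsilon-p_2)$ on $\Gamma_{\rm shock}$, which is an \emph{upper} bound, not a lower one; and at the corners $P_1,P_3$ we have $r=r_1$, $p^\varepsilon=p_1=r_1^2$, so $\psi^\varepsilon=0$ there. There is no uniform positive lower bound for $\psi^\varepsilon$ along the shock.

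This breaks the collar comparison. The boundary of $\mathcal C=\{0\le x\le h\}\cap\Omega$ necessarily contains a piece of $\Gamma_{\rm shock}$ near each corner, on which $0<x<h$ and hence $\phi_0(x)=cx(h-x)>0$. A Dirichlet-type comparison there would require $\psi^\varepsilon\ge\phi_0$, but you have no $\varepsilon$-independent rate at which $\psi^\varepsilon$ grows away from the corners along the shock; the weighted $C^\gamma$ estimates of Lemmas~\ref{le2-1}--\ref{nonfixed} control size and oscillation, not a lower bound on the ellipticity margin. The alternative---using the oblique condition $\mathcal Bp=0$ to exclude a boundary maximum of $\Psi-p^\varepsilon$ on the shock---would require the correct sign of $\mathcal B\Psi$, which you do not verify and which is not automatic for a barrier depending only on $r$ (and recall that the obliqueness itself degenerates at $P_2$). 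The paper's local barriers avoid this difficulty precisely because their amplitude is allowed to shrink toward $\Gamma_{\rm sonic}$ and the corners, so the comparison closes without ever invoking a uniform positive lower bound for $\psi^\varepsilon$ on $\Gamma_{\rm shock}$.
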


\begin{proof}
For $X_0=(\xi_0,\eta_0)\in \Omega$ and $0<R<1$, denote
$$
\zeta(X)=1-\frac{(\xi-\xi_0)^2+(\eta-\eta_0)^2}{R^2} \qquad\, \text{for $B_R(X_0)\cap \Gamma_{\rm sonic}=\emptyset$}.
$$
We define
$$
\phi=\delta_0(\zeta(X))^\tau,
$$
where $\delta_0$ and $\tau$ are two positive constants.
We can obtain a local uniform lower-barrier
$$
p^\varepsilon-(\xi^2+\eta^2)\geq \phi=\delta_0(\zeta(X))^\tau \qquad\, \text{in $B_{\frac{3}{4}}(X_0)\cap \overline{\Omega}$},
$$
where  $\delta_0$ and $\tau$ are  independent of $\varepsilon$.
Moreover,
$$
\delta_0\rightarrow 0\qquad\, \text{as $\text{dist}((\xi,\eta),\Gamma_{\rm sonic})\rightarrow 0$},
$$
so does $\phi$. See \cite{canic2006free,chen2014shock} for more details.
\end{proof}

The uniform lower bound of $p^\varepsilon-(\xi^2+\eta^2)$, independent of $\varepsilon$,
implies that the governing equation is locally uniform elliptic,
independent of $\varepsilon$.
Thus, we can apply the standard local compactness arguments to obtain the limit, $p$,
locally in the interior of the domain.

\begin{theorem}
There exist functions $r(\theta)\in C^1([\theta_3,\theta_1])$ and $p\in C^{2,\alpha}_{\rm loc}(\Omega)\cap C(\overline{\Omega})$
such that
$$
r^\varepsilon\rightarrow r \quad \text{in $C([\theta_3,\theta_1])$}, \qquad\,\,
p^\varepsilon\rightarrow p \quad \text{in $C^{2,\alpha}_{\rm loc}(\Omega)$},
$$
and $(p,r)$ is a solution of the free boundary value problem, Problem {\rm \ref{problem}}.
\end{theorem}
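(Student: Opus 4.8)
The plan is to obtain $(p,r)$ as the limit of the regularized solutions $(p^\varepsilon,r^\varepsilon)$ constructed in the previous subsection and then to verify that this limit satisfies every item of Problem \ref{problem}. First I would collect the uniform-in-$\varepsilon$ bounds. From Lemma \ref{nonfixed} the sequence is trapped, $p_2<\overline{p}^\varepsilon\le p^\varepsilon<p_1$ and $p^\varepsilon>r^2$ in $\overline{\Omega}$, and the preceding lower-barrier lemma gives $p^\varepsilon-(\xi^2+\eta^2)\ge\phi>0$ on every compact subset of $\Omega$ with $\phi$ independent of $\varepsilon$. Consequently the coefficient $p^\varepsilon-r^2+\varepsilon$ of $p_{rr}$ in \eqref{regularizedforp} is bounded below away from zero on compact subsets, so $Q^\varepsilon$ is locally uniformly elliptic in $\Omega$ with ellipticity constants independent of $\varepsilon$. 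Because $p^\varepsilon>r^2$, the cut-off is inactive, i.e.\ $\zeta(p^\varepsilon-r^2)=p^\varepsilon-r^2$, so $p^\varepsilon$ actually solves $Q^\varepsilon p^\varepsilon=Qp^\varepsilon+\varepsilon\Delta_{(\xi,\eta)}p^\varepsilon=0$. For the free boundary, the uniform bound $r_2+\delta\le r^\varepsilon\le r_1$ together with the defining ODE \eqref{shockeq}, whose right-hand side $g$ is a bounded continuous function of $(p^\varepsilon,r^\varepsilon)$ on this range (using Proposition \ref{le4.1} to stay away from $C_2$), shows that $\{r^\varepsilon\}$ is uniformly Lipschitz.

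Next I would extract convergent subsequences. By the Arzel\`a--Ascoli theorem, $r^\varepsilon\to r$ in $C([\theta_3,\theta_1])$ with $r$ Lipschitz. Interior Schauder estimates for the uniformly elliptic equation $Q^\varepsilon p^\varepsilon=0$, applied on an exhaustion of $\Omega$ by compact sets, yield $C^{2,\alpha}_{\rm loc}(\Omega)$ bounds uniform in $\varepsilon$; a diagonal argument then gives $p^\varepsilon\to p$ in $C^{2,\alpha}_{\rm loc}(\Omega)$, whence $Qp=\lim_{\varepsilon\to 0}(Q^\varepsilon p^\varepsilon-\varepsilon\Delta_{(\xi,\eta)}p^\varepsilon)=0$ in $\Omega$, so the limit satisfies \eqref{eqforp2}. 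Continuity up to $\overline{\Omega}$ and attainment of the Dirichlet datum $p=p_1$ on $\Gamma_{\rm sonic}$ follow from the $\varepsilon$-independent modulus of continuity supplied by the lower barrier $\phi$ (which pins $p$ from below) together with the upper bound $p^\varepsilon<p_1$; this gives $p\in C(\overline{\Omega})$.

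It remains to recover the free boundary conditions in the limit, which is the delicate point. Away from the corners $P_1,P_3$ the uniform ellipticity near $\Gamma_{\rm shock}$ (again via $\phi$ and Proposition \ref{le4.1}) upgrades Lemma \ref{le3-3} to a uniform-in-$\varepsilon$ $C^{1,\mu}$ gradient bound on the shock; hence $p^\varepsilon\to p$ in $C^1$ up to $\Gamma_{\rm shock}$ on compact subsets avoiding $P_1,P_3$, while $r^\varepsilon\to r$ in $C^1$ there by passing to the limit in \eqref{shockeq}. Since the coefficients $\beta_i$ in \eqref{eqrh2} and $g$ in \eqref{shockeq} are continuous functions of $(p,p_2,r,r')$, both the oblique derivative condition $\mathcal{B}p=0$ in \eqref{eqrh} and the shock ODE \eqref{shockeq} pass to the limit, and $r\in C^1([\theta_3,\theta_1])$ follows from the continuity of the limiting right-hand side. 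The strict separation $p>p_2$ on $\Gamma_{\rm shock}$, so that $\Gamma_{\rm shock}$ is a genuine one-phase free boundary not touching $C_2$, is inherited from the uniform gap $d\ge\delta$ of Proposition \ref{le4.1}.

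The main obstacle throughout is the uniformity in $\varepsilon$ of these estimates at the degenerate sonic boundary $\Gamma_{\rm sonic}$ and at the corners $P_1,P_3$, where the equation loses ellipticity and the Rankine--Hugoniot condition loses obliqueness; away from these sets everything is classical and the compactness is routine. Once the $\varepsilon$-independent barrier $\phi$ and the uniform gap $\delta$ are in hand, they simultaneously control the local ellipticity up to $\Gamma_{\rm shock}$, the boundary continuity at $\Gamma_{\rm sonic}$, and the non-degeneracy of the free boundary, and assembling the convergences above verifies that $(p,r)$ solves Problem \ref{problem}.
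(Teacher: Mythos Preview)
Your proposal is correct and follows essentially the same route as the paper: Arzel\`a--Ascoli on the uniformly $C^1$ family $\{r^\varepsilon\}$, interior Schauder estimates on $\{p^\varepsilon\}$ using the $\varepsilon$-independent local ellipticity from the barrier $\phi$, and then passage to the limit in the oblique condition $\mathcal{B}p=0$ and the shock ODE \eqref{shockeq} using the uniform separation from $C_2$ given by Proposition~\ref{le4.1}. Your write-up is in fact more explicit than the paper's terse proof (you spell out why the cut-off is inactive, you flag the diagonal argument, and you isolate the genuinely non-uniform points at $\Gamma_{\rm sonic}$ and the corners); one minor remark is that the continuity at $\Gamma_{\rm sonic}$ is more cleanly obtained by the squeeze $r^2<p^\varepsilon\le p_1$ than by invoking $\phi$ as a ``modulus of continuity''.
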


\begin{proof}
We have obtained the estimate:
$$
\|r^\varepsilon(\theta)\|_{C^{1}([\theta_3,\theta_1])}\leq C,
$$
where $C$ is independent of $\varepsilon$.
Then, by the Arzela-Ascoli theorem, there exists a subsequence converging uniformly to a function
$r(\theta)$ in $C^{\alpha}([\theta_3,\theta_1])$ as $\varepsilon \rightarrow 0$, for any $\alpha\in (0,1)$.
By the local ellipticity and the standard
interior Schauder estimate, there exists a function $p\in C^{2,\alpha}_{\rm loc}(\Omega)$
such that $p^{\varepsilon}\rightarrow p$ in any
compact subset, contained by $\overline{\Omega}\backslash (\Gamma_{\rm sonic}\cup\Gamma_{\rm shock})$,
satisfying that $Qp=0$.

Since the shock does not meet the sonic circle of state $p_2$,
$p^\varepsilon>p_2$.
Thus, we have the uniform ellipticity,
and the uniform negativity of $\beta\cdot \boldsymbol{\nu}$ locally.
Thus, we can pass the limit to obtain that $p\in C^{1,\alpha}$,
$$
\mathcal{B}p=0\qquad\, \text{on $\Gamma_{\rm shock}$},
$$
and $r^{\prime}(\theta)=g(r(\theta),\theta)$.
Therefore, the limiting vector function $(p,r)$ is a global solution of
Problem \ref{problem}.
\end{proof}

The property of $\Gamma_{\rm shock}$ is stated as follows:

\begin{proposition}
For the free boundary $\Gamma_{\rm shock}=\{(\xi,\eta(\xi))\, :\,\xi_3<\xi<\xi_1\}$ with $\xi_3$ and $\xi_1$
as the $\xi$--coordinates of points $P_3$ and $P_1$ respectively,
$$
\eta(\xi)\in C^2(\xi_3,\xi_1),
$$
and $\eta(\xi)$ is strictly convex for $\xi\in (\xi_3,\xi_1)$.
\end{proposition}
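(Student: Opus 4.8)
The plan is to reduce the geometric convexity of $\Gamma_{\rm shock}$ to a sign condition on the pressure trace, exploiting the algebraic structure of the Rankine--Hugoniot relation. Parametrizing $\Gamma_{\rm shock}$ by $\theta\mapsto(\xi(\theta),\eta(\theta))=(r(\theta)\cos\theta,\,r(\theta)\sin\theta)$, one has $\xi'(\theta)=r'\cos\theta-r\sin\theta$ together with the standard identity
\[
\eta''(\xi)=\frac{r^2+2(r')^2-r\,r''}{(\xi')^3}.
\]
I would first check that the shock is a genuine graph over $\xi$ by verifying $\xi'>0$ on $(\theta_3,\theta_1)$: the corners $P_1,P_3$ lie in the lower half-plane on either side of the $\eta$--axis, so $\sin\theta<0$ while $\cos\theta$ changes sign at $\theta=\tfrac{3\pi}{2}$, and the sign information in property (R3) ($r'>0$ for $\theta>\tfrac{3\pi}{2}$, $r'<0$ for $\theta<\tfrac{3\pi}{2}$, $r'(\tfrac{3\pi}{2})=0$) makes both terms of $\xi'$ nonnegative with $\xi'(\tfrac{3\pi}{2})=r>0$. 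Hence $(\xi')^3>0$. The $C^2$--regularity of $\eta(\xi)$ on the open interval then follows from $r\in C^{2,\alpha}((\theta_3,\theta_1))$ (Theorem~\ref{thm1}), the smoothness of the inverse map $\xi\mapsto\theta$, and the sign of $\eta''$ is that of the curvature numerator $N:=r^2+2(r')^2-r\,r''$.

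The key step is to evaluate $N$ using the shock relation itself. Writing $(r')^2=\frac{r^4}{\overline p}-r^2$ with $\overline p=\frac{p+p_2}{2}$ and differentiating in $\theta$ expresses $r''$ through $\frac{\dd\overline p}{\dd\theta}=\tfrac12\frac{\dd p}{\dd\theta}$, where $\frac{\dd p}{\dd\theta}=p_r r'+p_\theta$ is the derivative of the trace $p(r(\theta),\theta)$ along the shock. Substituting back, the purely $r$--dependent polynomial part cancels exactly against $2(r')^2$, and I expect the clean collapse
\[
N=\frac{r^5}{4\,\overline p^{\,2}\,r'}\,\frac{\dd p}{\dd\theta}.
\]
Consequently $\operatorname{sign}\eta''=\operatorname{sign}\big(\tfrac{1}{r'}\tfrac{\dd p}{\dd\theta}\big)$, so strict convexity is equivalent to the assertion that $\frac{\dd p}{\dd\theta}$ and $r'$ share the same sign on each branch --- equivalently, that the pressure trace on $\Gamma_{\rm shock}$ attains a strict minimum at $P_2$ and increases strictly toward each corner.

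To extract this sign I would use the oblique derivative condition \eqref{eqrh}--\eqref{eqrh2}. Since $\beta_1 p_r+\beta_2 p_\theta=0$, the gradient satisfies $(p_r,p_\theta)=\lambda(\beta_2,-\beta_1)$ for a scalar $\lambda$, and the obliqueness identity $\beta_1-\beta_2 r'=\mu=-r'\frac{p-p_2}{p}$ gives $\frac{\dd p}{\dd\theta}=-\lambda\mu=\lambda\,r'\frac{p-p_2}{p}$, whence $\tfrac{1}{r'}\tfrac{\dd p}{\dd\theta}=\lambda\frac{p-p_2}{p}$. Because $p>p_2$ on $\Gamma_{\rm shock}$ (Theorem~\ref{thm1}(i)), the sign of $\eta''$ is exactly the sign of $\lambda$, i.e. the sign of the pressure's conormal derivative at the shock (note $p_r=\lambda\beta_2$). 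The remaining task --- the genuine analytic obstacle --- is to prove $\lambda>0$. This is where the PDE enters: I would argue by the maximum principle and the Hopf lemma, using that $p=p_1$ on $\Gamma_{\rm sonic}$ while $p<p_1$ throughout $\Omega$ (so $p_1$ is the boundary maximum), the uniform ellipticity available away from $\Gamma_{\rm sonic}$, and the uniform negativity of $\beta\cdot\boldsymbol{\nu}$ on $\Gamma_{\rm shock}$ recorded in the existence proof, to force the correct sign of the gradient at the free boundary.

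Finally I would address the boundary effects. At $P_2$ the identity for $N$ is of the form $0/0$ since $r'(\tfrac{3\pi}{2})=0$; here I would invoke the reflection symmetry about $\xi=0$ (which yields $\eta'(0)=0$) and pass to the limit in $\tfrac{1}{r'}\tfrac{\dd p}{\dd\theta}$ via L'H\^{o}pital, using the $C^2$--regularity of $p$ at $P_2$, to conclude $\eta''(0)>0$. Near $P_1,P_3$ no statement is claimed, as the interval is open, so the degeneracy there is harmless. The main difficulty I anticipate lies squarely in the sign of $\lambda$: the differential-geometric reduction and the cancellation producing $N$ are purely algebraic once the shock relation is differentiated, whereas pinning down the sign of the normal derivative of the pressure on a transonic free boundary requires the full strength of the maximum-principle machinery together with the structural bounds established earlier.
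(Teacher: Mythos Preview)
Your reduction is correct and arrives at exactly the same analytic crux as the paper, but by a different route. The paper works directly in the $(\xi,\eta)$ coordinates: from $\eta'=f(\xi,\eta,\overline p)$ it computes $\eta''=f_\xi+\eta' f_\eta+f_{\overline p}\,\overline p'$ and observes the striking cancellation $f_\xi+\eta' f_\eta=0$, so that $\eta''=f_{\overline p}\,\overline p'$ with $f_{\overline p}>0$ verified by an explicit formula. Your polar computation of the curvature numerator $N=r^2+2(r')^2-rr''$ and the collapse $N=\dfrac{r^5}{4\overline p^{\,2}r'}\dfrac{\dd p}{\dd\theta}$ is the exact polar counterpart of that cancellation; both approaches reduce strict convexity to the single claim that the pressure trace on $\Gamma_{\rm shock}$ is strictly monotone from $P_2$ toward each corner. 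The paper's route is a bit shorter because it differentiates the closed-form $f$ rather than passing through $r''$ and back-substituting, and it dispenses with the separate graph check since $\eta'$ is given explicitly.

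Where you go further than the paper is in attempting to \emph{prove} $\dfrac{1}{r'}\dfrac{\dd p}{\dd\theta}>0$: the paper simply asserts $\overline p'>0$ on the right branch without argument. Your reformulation via $(p_r,p_\theta)=\lambda(\beta_2,-\beta_1)$ and $\dfrac{1}{r'}\dfrac{\dd p}{\dd\theta}=\lambda\,\dfrac{p-p_2}{p}$ is clean, and correctly isolates the sign of $\lambda$ as the only remaining question. Be aware, though, that a bare Hopf lemma gives a sign only at a boundary extremum, not pointwise along the whole free boundary; to upgrade this you would need to rule out interior extrema of the trace $\theta\mapsto p(r(\theta),\theta)$ on each branch of $\Gamma_{\rm shock}$ (for instance by a tangential-maximum-principle argument using the oblique condition and the structure of $\beta$), which is the step neither you nor the paper spells out.
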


\begin{proof}
Define
$$
F(\xi,\eta)=\xi^2+\eta^2-r^2(\theta(\xi,\eta))=0\qquad \text{on}\,\, \Gamma_{\rm shock}.
$$
Then
\begin{equation}
\label{eqF}
F_\eta|_{\xi=0}=(2\eta-2rr^\prime\theta_{\eta})|_{\xi=0}=2\eta(0)\neq 0.
\end{equation}
By the implicit function theorem, there exists $\eta=\eta(\xi)$ such that \eqref{eqF} holds locally on $\Gamma_{\rm shock}$
near $\xi=0$. Hence, there exists $\overline{\xi}>0$ such that $(\xi,\eta(\xi))\in \Gamma_{\rm shock}$
for $0<|\xi|\leq \overline{\xi}$.

Recall that
$$
\eta^\prime(\xi)=f(\xi,\eta(\xi),\overline{p})=
\frac{\xi\eta+\sqrt{\overline{p}\,(\xi^2+\eta^2-\overline{p})}}{\xi^2-\overline{p}}.
$$
Then
$$
\eta^{\prime\prime}(\xi)=f_\xi+\eta^\prime f_\eta+f_{\overline{p}}\,\overline{p}^\prime.
$$
Notice that
\begin{align*}
f_\xi=&\frac{\eta}{\xi^2-\overline{p}}+\frac{\xi\,\overline{p}}
{(\xi^2-\overline{p})\sqrt{\overline{p}\,(\xi^2+\eta^2-\overline{p})}}
-\frac{2\xi\big(\xi\eta+\sqrt{\overline{p}\,(\xi^2+\eta^2-\overline{p})}\big)}{(\xi^2-\overline{p})^2},\\
f_\eta=&\frac{\xi}{\xi^2-\overline{p}}+\frac{\eta\,\overline{p}}
{(\xi^2-\overline{p})\sqrt{\overline{p}\,(\xi^2+\eta^2-\overline{p})}}.
\end{align*}
Then
$f_\xi+\eta^\prime f_\eta=0$. Thus, the sign of $\eta^{\prime\prime}$ is determined by the signs of
$f_{\overline{p}}$ and $\overline{p}^\prime$.
Since
$$
\frac{\partial f}{\partial \overline{p}}=\frac{\big(\eta\sqrt{\overline{p}}+\xi\sqrt{\xi^2+\eta^2-\overline{p}}\big)^2}
{2(\xi^2-\overline{p})^2\sqrt{\overline{p}\,(\xi^2+\eta^2-\overline{p})}},
$$
we see that, for each $\xi\in (0,\xi_1)$, $\overline{p}^{\prime}>0$ and $\frac{\partial{f}}{\partial \overline{p}}>0$.
Therefore,
$$
\eta^{\prime\prime}(\xi)\geq 0 \qquad \mbox{for $\xi\in[0,\theta_1)$}.
$$
Similarly, it can be proved that
$$
\eta^{\prime\prime}(\xi)\geq 0 \qquad \mbox{for $\xi\in(\theta_3,0]$}.
$$
This implies that the shock curve $\eta(\xi)$ is strictly convex in the self-similar coordinates.
\end{proof}

\section{Optimal Regularity near the Sonic Boundary}\label{sec4}

In this section, we first establish the Lipschitz continuity for the solution near the degenerate
sonic boundary $\Gamma_{\rm sonic}$.

\begin{lemma}
The solution, $p(\xi, \eta)$ of Problem {\rm \ref{problem}} is Lipschitz continuous up to the sonic boundary $\Gamma_{\rm sonic}$.
\end{lemma}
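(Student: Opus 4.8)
The plan is to establish the Lipschitz bound by showing that $p$ attains its boundary value $p_1$ at a rate linear in the distance to $\Gamma_{\rm sonic}$, and then to promote this normal estimate to a genuine gradient bound up to the boundary. First I would pass to the flattening coordinates $(x,y)=(r_1-r,\theta-\theta_1)$ already introduced for \S\ref{sec4}, and set $\psi:=p-p_1$, so that $\Gamma_{\rm sonic}$ becomes $\{x=0\}$ with $\psi=0$ there. In these variables equation \eqref{eqforp2} reads
\begin{equation*}
(p-r^2)\psi_{xx}+\frac{p}{r^2}\psi_{yy}+\Big(2r-\frac{p}{r}\Big)\psi_x+\frac{r^2}{p}\psi_x^2=0 ,
\end{equation*}
and the coefficient $p-r^2=\psi+2r_1x-x^2$ of $\psi_{xx}$ vanishes on $\{x=0\}$; this degenerate (Keldysh-type) ellipticity, together with the outward drift $2r-\tfrac{p}{r}\to r_1>0$, is what makes the boundary behaviour delicate.

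The key observation is that the normal Lipschitz bound is already forced by the two–sided a priori bounds produced in \S\ref{sec3}. The solution inherits from the uniform estimates there (cf. Lemma~\ref{nonfixed}) the bound $r^2\le p\le p_1$ on $\overline\Omega$, with $p>r^2$ in $\Omega$ and equality $p=p_1=r_1^2=r^2$ on $\Gamma_{\rm sonic}$. Hence, for every point near $\Gamma_{\rm sonic}$,
\begin{equation*}
0\le p_1-p\le p_1-r^2=r_1^2-r^2=(r_1-r)(r_1+r)\le 2r_1\,(r_1-r)=2r_1\,\text{dist}\big((\xi,\eta),C_1\big),
\end{equation*}
so that $|p-p_1|\le 2r_1\,\text{dist}\big((\xi,\eta),\Gamma_{\rm sonic}\big)$ away from the corners $P_1,P_3$ (where the estimates of \S\ref{sec3} already apply). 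This controls the oscillation of $p$ in the normal direction by the distance to the boundary and, in particular, gives continuity up to $\Gamma_{\rm sonic}$.

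To upgrade this to Lipschitz continuity as a function up to $\Gamma_{\rm sonic}$, I would first secure a matching lower bound on the subsonic gap, $p-r^2\ge c\,x$ for some $c>0$, since bare subsonicity only yields $p-r^2\ge0$ with an unspecified rate. For this I would use the linear comparison function $\underline w=p_1-a\,x$ with $r_1\le a<2r_1$: a short computation shows $Q\underline w\ge0$ near $\{x=0\}$, so $\underline w$ is a subsolution, and the comparison principle (applicable because $\underline w$ is affine, so the difference operator has no zeroth–order term) gives $p\ge p_1-a\,x$ and hence $p-r^2\ge(2r_1-a)\,x-x^2\ge c\,x$. With $p-r^2$ now comparable to $x$, I would run a rescaling argument in the spirit of Step~4 of the proof of Proposition~\ref{le4.1}: fixing a boundary point and a small $x_0>0$, rescale the half–neighbourhood at distance $x_0$ to unit size, where the principal part becomes uniformly elliptic with bounded coefficients and the quadratic term $\tfrac{r^2}{p}\psi_x^2$ stays bounded; the interior and boundary gradient estimates for uniformly elliptic equations from \cite{gilbarg2015elliptic} then give $|D\psi|\le C$ at unit scale, and translating back yields a bound on $|Dp|$ uniform as $x_0\to0$. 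Combined with the continuity up to $\Gamma_{\rm sonic}$ and the interior $C^{2,\alpha}$–regularity, this gives $p\in C^{0,1}$ up to $\Gamma_{\rm sonic}$.

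The main obstacle is precisely this last passage across the degenerate boundary: the standard uniformly–elliptic gradient theory does not apply directly at the characteristic set $\{p=r^2\}$, and one must verify that after rescaling the degeneracy is exactly absorbed — which hinges on the comparability $p-r^2\asymp x$ and on the structural cancellation between the degenerate term $(p-r^2)\psi_{xx}$ and the nonlinear term $\tfrac{r^2}{p}\psi_x^2$ that excludes any $x^{1/2}$–type growth and pins the boundary rate at the linear one. Making the linear lower barrier and the rescaled gradient estimate rigorous, uniformly up to (but excluding) the corners $P_1$ and $P_3$, is the delicate heart of the argument.
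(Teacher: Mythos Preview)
Your ``key observation'' paragraph is the paper's entire proof. The paper interprets ``Lipschitz continuous up to $\Gamma_{\rm sonic}$'' only in the sense of the normal rate
\[
|p-p_1|\le |p-r^2|+|p_1-r^2|\le 2\,|p_1-r^2|\le 4\sqrt{p_1}\,|r_1-r|,
\]
using nothing beyond $r^2\le p\le p_1$ (which you also identified). It does not attempt a uniform gradient bound here.

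The second half of your proposal --- the linear lower barrier $\underline w=p_1-ax$ and the parabolic rescaling to extract $|Dp|\le C$ --- is not wrong in spirit, but it is misplaced: the paper carries out exactly these steps in the \emph{next} two lemmas. The lower bound $p-r^2\ge c\,x$ is obtained there not by a direct subsolution comparison as you propose, but by an ellipticity--principle argument on $p-r^2-\lambda g$ combined with Hopf's lemma on $\Gamma_{\rm shock}$; and the upgrade to $C^{1,\alpha}$ up to $\{x=0\}$ is done via the rescaling $W^{(z_0)}(S,T)=x_0^{-1-\alpha}W(x_0+\tfrac{x_0}{8}S,\,y_0+\tfrac{\sqrt{x_0}}{8}T)$ applied to $W=r_1x-\varphi$, which is precisely the mechanism you sketch. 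So your route would work, with the caveat that your comparison step for the quasilinear operator $Q$ needs more care than ``the difference operator has no zeroth--order term'' (the coefficients of $Q$ depend on the unknown), but for the lemma as stated the paper simply stops after the two-line rate estimate.
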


\begin{proof}
Since $p\leq p_1$ in $\Omega$, it follows that
$$
p-\xi^2-\eta^2<p_1-\xi^2-\eta^2.
$$
On the other hand,
$p-\xi^2-\eta^2>\xi^2+\eta^2-p_1$ in $\Omega$.
Then
\begin{align*}
|p-p_1|&\leq|p-\xi^2-\eta^2|+|p_1-\xi^2-\eta^2|\\
&\leq 2\,|p_1-\xi^2-\eta^2|\leq 4\sqrt{p_1}\,\big|\sqrt{p_1}-\sqrt{\xi^2+\eta^2}\big|,
\end{align*}
which implies that $p$ is Lipschitz continuous up to the degenerate boundary $\Gamma_{\rm sonic}$.
\end{proof}

Next, we want to show that the Lipschitz continuity is the optimal regularity for $p$ across the sonic boundary $\Gamma_{\rm sonic}$,
and at the intersection points $P_1$ and $P_3$.
Since the problem is symmetric, we consider only the right-half sonic circle for convenience.

For $\epsilon\in(0,\frac{r_1}{4})$, we denote the $\epsilon$--neighborhood of the sonic
boundary $\Gamma_{\rm sonic}$ within $\Omega$ by
$$
\Omega_{\epsilon}:=\Omega\cap\{(r,\theta)\,:\, 0<r_{1}-r<\epsilon, \, \theta_1<\theta<\frac{\pi}{2}\},
$$
where we take $\theta_1\in (-\frac{\pi}{2},0)$ if $P_1$ is below the $\xi$--axis,

In $\Omega_{\epsilon}$, we introduce the
new coordinates:
\begin{equation}
(x,y):=(r_1-r,\theta-\theta_1).
\end{equation}
Then
$$
\Gamma_{\rm sonic}:=\{(0,y)\,:\,0<y<\frac{\pi}{2}-\theta_1\},\qquad\,\, P_1=(0,0).
$$
We can take $P_1$ as an interior point of $\Gamma_{\rm sonic}^{\rm ext}$,
which is obtained by reflecting $\Gamma_{\rm sonic}$ with respect to $y=0$.
Let
$$
Q^{+}_{r,R}:=\{(x,y)\,:\,x\in(0,r), |y| <R\} \qquad\,\, \text{with}\,\, R=\frac{\pi}{2}-\theta_1.
$$
Let $\varphi=p_1-p$. Then
\begin{equation}\label{eqbound}
\varphi>0\,\,\,\,  \text{in $Q_{r,R}^{+}$}, \qquad\,\, \varphi=0\,\,\,\, \text{on $\partial Q_{r,R}^{+}\cap\{x=0\}$}.
\end{equation}

It follows from \eqref{eqforp2} that $\varphi$ satisfies
\begin{equation}\label{eqforvarphi}
\mathcal{L}\varphi:=(2r_1x-\varphi+O_1)\varphi_{xx}+(1+O_2)\varphi_{yy}+(r_1+O_3)\varphi_{x}
-(1+O_4)(\varphi_{x})^2=0
\end{equation}
in $Q_{r,R}^{+}$, where
\begin{equation}\label{eqhighorder}
\begin{split}
&O_1(x,\varphi)=-x^{2},    \quad
O_2(x,\varphi)=\frac{x(2r_{1}-x)-\varphi}{(r_{1}-x)^{^{2}}},
\\[2mm]
& O_3(x,\varphi)=\frac{2x^{2}-3r_{1}x+\varphi}{r_{1}-x},
\quad O_4(x,\varphi)=\frac{x^{2}-2r_{1}x+\varphi}{r_{1}^{2}-\varphi}.
\end{split}
\end{equation}

\begin{lemma}
There exist $\varepsilon>0$ and $k>0$ depending on the Riemann initial data such that, for any solution $p$,
\begin{equation}\label{eq3-19}
0\leq\varphi \leq (2r_1-k)x \qquad\, \text{for}\,\,x\in (0,\varepsilon).
\end{equation}
\end{lemma}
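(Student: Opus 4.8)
The plan is to prove the upper inequality in \eqref{eq3-19} by constructing an explicit linear supersolution for the operator $\mathcal{L}$ in \eqref{eqforvarphi} and invoking a comparison principle; the lower bound $\varphi\ge 0$ is already contained in the preceding Lipschitz lemma (it is just $p\le p_1$), so no work is needed there. The natural starting point is the crude a priori bound coming from ellipticity: in the subsonic region the coefficient of $\varphi_{xx}$ in \eqref{eqforvarphi} must be positive, and since by \eqref{eqhighorder} this coefficient equals $2r_1x-\varphi+O_1=2r_1x-\varphi-x^2$, one gets
\[
0\le \varphi<2r_1x-x^2 \qquad\text{in }Q^+_{r,R}.
\]
The whole content of the lemma is to upgrade the slope $2r_1$ to a strictly smaller one for small $x$.

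For the barrier I would try the linear function $w:=(2r_1-k)\,x$ with $k\in(0,r_1)$ to be fixed. Since $w_{xx}=w_{yy}=0$ and $w_x=2r_1-k=:a$, substituting $w$ into $\mathcal{L}$ and using that $O_3,O_4$ from \eqref{eqhighorder} are $O(x)$ when evaluated along $w$ gives
\[
\mathcal{L}w=(r_1+O_3)\,a-(1+O_4)\,a^2=(2r_1-k)\big(k-r_1\big)+O(x).
\]
The leading term $(2r_1-k)(k-r_1)$ is strictly negative for every $k\in(0,r_1)$, and the remainder has coefficients bounded in terms of the Riemann data; hence there is $\varepsilon_0=\varepsilon_0(r_1,k)>0$ with $\mathcal{L}w<0$ on $\{0<x<\varepsilon_0\}$, so $w$ is a strict supersolution there. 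One also checks that along $w$ the principal coefficient $2r_1x-w-x^2=kx-x^2$ stays positive, keeping the comparison in the elliptic regime.

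It then remains to order $w$ against $\varphi$ on the boundary of a suitable comparison region. On $\{x=0\}$ we have $w=\varphi=0$ by \eqref{eqbound}. On the outer side $\{x=\varepsilon\}$, the crude bound gives $\varphi\le 2r_1\varepsilon-\varepsilon^2=(2r_1-\varepsilon)\varepsilon$, so choosing $k\le\varepsilon$ (say $k=\tfrac{\varepsilon}{2}$ after shrinking $\varepsilon\le\varepsilon_0$) forces $\varphi\le w$ there; this is why the gap $k$ is naturally tied to the strip thickness $\varepsilon$, which is harmless since the lemma only asserts the \emph{existence} of such $\varepsilon,k>0$. The delicate issue is the two lateral sides $\{|y|=R\}$, on which $w$ carries no information. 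Here I would exploit the even reflection of $\Gamma_{\rm sonic}$ across $y=0$ that makes $P_1=(0,0)$ an interior point of $\Gamma^{\rm ext}_{\rm sonic}$, so that the estimate is genuinely local near each boundary point $(0,y_0)$; in a thin rectangle centered at $y_0$ one augments the barrier by a localizing term $M\,x\,(y-y_0)^2$, which dominates the crude bound on the lateral sides once $M\ge k$, vanishes at $y=y_0$ so as not to spoil the final slope, and perturbs $\mathcal{L}w$ only at order $x$ (through $w_{yy}$ and $w_x$), hence is absorbed by the strictly negative leading term after further shrinking $\varepsilon$.

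With $w\ge\varphi$ on the whole boundary and $\mathcal{L}w<0=\mathcal{L}\varphi$ inside, the comparison principle yields $\varphi\le w=(2r_1-k)x$, which is \eqref{eq3-19}. I expect the main obstacle to be making this comparison step fully rigorous: $\mathcal{L}$ is quasilinear, with the $\varphi$-dependent coefficients of \eqref{eqhighorder} and the quadratic gradient term $(1+O_4)(\varphi_x)^2$, so at a putative interior maximum of $\varphi-w$ one must check that the differences of coefficients together with the linearization $(\varphi_x)^2-(w_x)^2=(\varphi_x+w_x)(\varphi_x-w_x)$ do not reverse the sign, using the positivity of the principal coefficients in the subsonic region. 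Reconciling the lateral-boundary localization with the clean slope bound at the center is the second point that requires care.
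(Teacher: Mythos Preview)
Your approach is genuinely different from the paper's, and it has a real gap at the corners $P_1,P_3$.

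The paper does \emph{not} argue locally in the strip. It proves a global lower bound on the ellipticity quantity: one takes a smooth approximation $g$ of $\mathrm{dist}(\cdot,\Gamma_{\rm sonic})$ and shows, by the ellipticity principle, that $p-r^2-\lambda g$ cannot attain an interior minimum in $\Omega$; a minimum on $\Gamma_{\rm shock}$ is then ruled out for small $\lambda>0$ by the oblique derivative condition \eqref{eqrh}--\eqref{eqrh2} together with the Hopf lemma. Hence $p-r^2\ge \lambda\,\mathrm{dist}(\cdot,\Gamma_{\rm sonic})$ on all of $\overline{\Omega}$, and near $\Gamma_{\rm sonic}$ this reads $\lambda x\le p-(r_1-x)^2=-\varphi+2r_1x-x^2$, which gives \eqref{eq3-19} with $k=\lambda/2$. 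The shock boundary condition is used in an essential way.

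Your local barrier $w=(2r_1-k)x+Mx(y-y_0)^2$ works fine for $y_0$ in the interior of the sonic arc, but the step where you invoke ``even reflection of $\Gamma_{\rm sonic}$ across $y=0$'' to treat $P_1$ as an interior sonic point does not do what you need. Reflecting $\Gamma_{\rm sonic}$ is only a geometric statement about the line $\{x=0\}$; it does not extend the \emph{solution}. Near $P_1$ the domain is $\{f(x)<y<R\}$ with $f(0)=0$ and $f'\ge\omega>0$, so for $y_0$ close to $0$ the lateral side $\{y=y_0-\rho\}$ of your comparison rectangle either exits the domain through $\Gamma_{\rm shock}$ or forces $\rho\le y_0\to 0$. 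In the first case you have given no inequality for $w$ against the oblique condition on the shock; in the second, the constraints $M\rho^2\ge k$ (needed on the lateral sides) and $2Mx$ absorbable by the $O(1)$ negative leading term (needed for $\mathcal{L}w<0$) force $\varepsilon\lesssim 1/M\lesssim \rho^2/k$, hence $k\lesssim\rho\to 0$. You do not obtain a uniform $k>0$ up to the corner.

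The missing ingredient is precisely the use of the shock boundary condition, which is what the paper's global Hopf argument supplies. If you want to keep a barrier viewpoint, you would have to verify a sign for $\mathcal{B}w$ (or for $\mathcal{B}$ applied to $p-r^2-\lambda g$) on $\Gamma_{\rm shock}$; that computation, not the interior supersolution check, is where the content of the lemma lies.
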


\begin{proof}
We give the outline of the proof, which is similar to that for Lemma $9.6.4$ in Chen-Feldman \cite{chen2018mathematics}.
We first define a smooth
approximation to $(\xi,\eta)\rightarrow \text{dist}((\xi,\eta),\Gamma_{\rm sonic})$,
denoted  by $g(\xi,\eta)$, and then consider the function:
$p-r^2-\lambda g(\xi,\eta)$, where $\lambda>0$ will be specified later.
According to the ellipticity
principle, it can be proved that $p-r^2-\lambda g(\xi,\eta)$ cannot attain a minimum in the interior of $\Omega$.

Next, we turn to the shock boundary.
Suppose that $p-r^2-\lambda g(\xi,\eta)$ achieves its minimum at $P_{\rm min}\in\Gamma_{\rm shock}$.
Then
$$
(p-r^2)(P_{\rm min})\leq \lambda g(P_{\rm min})\leq \lambda C\qquad \text{for some}\,\, C>0.
$$
By the boundary condition \eqref{eqrh}--\eqref{eqrh2}, we can choose $\lambda$ sufficiently small such that
$$
\big((\beta_1,\beta_2)\cdot \nabla(p-r^2-\lambda g(\xi,\eta))\big)(P_{\rm min})<0,
$$
which contradicts the Hopf maximum principle.
Thus, $p-r^2-\lambda g(\xi,\eta)$ must attain its minimum on $\Gamma_{\rm sonic}$,
which implies that
\begin{equation}
\label{eqellipticity}
p-r^2\geq \lambda\, \text{dist}((\xi,\eta),\Gamma_{\rm sonic}) \qquad \text{in $\overline{\Omega}$}.
\end{equation}
It is clear that $\varphi\geq 0$.
Combining with \eqref{eqellipticity}, we can derive
\begin{equation*}
\lambda x\leq \lambda\, \text{dist}((\xi,\eta),\Gamma_{\rm sonic})\leq p-(r_1-x)^2=-\varphi+2r_1x-x^2.
\end{equation*}
This lemma can be proved by taking $k=\frac{\lambda}{2}$.
\end{proof}

According to  \eqref{eq3-19},
we derive  that $O_{i}(x,\varphi), i=1,\cdots,4$, are continuously differentiable and satisfy
\begin{equation}
\frac{|O_{1}(x,y)|}{x^2}+\frac{|O_{k}(x,y)|}{x}+\frac{|DO_{1}(x,y)|}{x}+|DO_{k}(x,y)|\leq N
\qquad\, \text{for $k=2,3,4$},
\end{equation}
for some $N>0$ depending on the Riemann initial data.
Then the leading terms of equation \eqref{eqforvarphi} form the following equation:
\begin{equation}\label{eqforvarphi2}
(2r_1x-\varphi)\varphi_{xx}+\varphi_{yy}+r_1\varphi_{x}-\varphi_{x}^2=0,
\end{equation}
which  is uniformly
elliptic in every subdomain $\{x>\delta\}$ with $\delta>0$.

\begin{lemma} \label{lem5-2}
Let $\varphi\in C^{2}(Q_{r,R}^{+})\cap C(\overline{Q_{r,R}^{+}})$ be the solution of equation \eqref{eqforvarphi} with condition
\eqref{eqbound}. Then, for any $\alpha\in (0,1)$ and $|y|<\frac{R}{2}$,
\begin{equation}\label{eq4.9}
\varphi\in C^{1,\alpha}(\overline{Q_{r/2,R/2}^{+}}), \qquad\, \varphi_{x}(0,y)=r_1, \qquad \varphi_{y}(0,y)=0.
\end{equation}
\end{lemma}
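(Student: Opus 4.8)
The plan is to regard the full equation \eqref{eqforvarphi} as a lower-order perturbation of the scale-invariant model \eqref{eqforvarphi2} and to deduce the $C^{1,\alpha}$--regularity up to the degenerate boundary $\{x=0\}$ by an anisotropic (parabolic) rescaling argument, in the spirit of Lemma~9.6.4 in \cite{chen2018mathematics}. Throughout I would work on $Q^+_{r,R}$ where, thanks to the even reflection across $y=0$ already set up above, the degenerate boundary is the whole segment $\{x=0,\ |y|<R\}$ and $P_1=(0,0)$ is an interior point of it, so that no separate corner analysis is needed and the estimates can be obtained uniformly up to and through $P_1$.

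First I would upgrade the one-sided bound $0\le\varphi\le(2r_1-k)x$ from \eqref{eq3-19} to a two-sided linear bound $c_1 x\le\varphi\le c_2 x$ with $0<c_1\le r_1\le c_2<2r_1$, by comparison with functions modelled on the exact flat solution $\varphi=r_1x$ of \eqref{eqforvarphi2}. The mechanism is that, for $\varphi\sim a x$, the first-order part $r_1\varphi_x-\varphi_x^2$ of the equation reduces to $r_1a-a^2$, which vanishes exactly at the admissible slope $a=r_1$; barriers of the form $r_1x\pm Cx^{1+\sigma}$ with $\sigma\in(0,\tfrac12)$, together with the decay of the $O_i$ recorded in \eqref{eqhighorder}, then trap $\varphi$ between two multiples of $x$, and in particular keep the slope bounded away from $0$.

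With $\varphi\asymp x$ in hand, I would carry out the rescaling, which is the heart of the proof. For a boundary point $(0,y_0)$ and small $x_0>0$, set $\varphi^{(z_0)}(S,T):=\tfrac{1}{x_0}\varphi(x_0S,\,y_0+\sqrt{x_0}\,T)$ on the fixed rectangle $\{|S-1|<\tfrac12,\ |T|<\tfrac12\}$. A direct computation shows that the model \eqref{eqforvarphi2} is \emph{exactly invariant} under this scaling, becoming $(2r_1S-\varphi^{(z_0)})\varphi^{(z_0)}_{SS}+\varphi^{(z_0)}_{TT}+r_1\varphi^{(z_0)}_S-(\varphi^{(z_0)}_S)^2=0$, while the perturbations $O_i$ and the remaining nonlinear contributions become genuinely lower order as $x_0\to0$ by \eqref{eqhighorder}. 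Because $S$ ranges in $(\tfrac12,\tfrac32)$ and $\varphi^{(z_0)}\asymp S$, the coefficient $2r_1S-\varphi^{(z_0)}$ stays bounded above and below by positive constants, so the rescaled equation is \emph{uniformly elliptic} on the reference rectangle with constants independent of $x_0$ and $y_0$. Interior Schauder estimates from \cite{gilbarg2015elliptic} then yield $\|\varphi^{(z_0)}\|_{2,\alpha}\le C$ on the central subrectangle, uniformly in $(x_0,y_0)$.

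Scaling back converts these uniform estimates into bounds of the type $|\varphi_x(x,y)-r_1|\le Cx^{\alpha}$ and $|\varphi_y(x,y)|\le Cx^{\alpha}$ on $Q^+_{r/2,R/2}$, which is precisely $\varphi\in C^{1,\alpha}(\overline{Q^+_{r/2,R/2}})$ up to $\{x=0\}$; the identity $\varphi_y(0,y)=0$ is then immediate from $\varphi\equiv0$ there. To pin down $\varphi_x(0,y)=r_1$, I would pass to the blow-up limit: by the uniform bounds, $\varphi^{(z_0)}\to a(y_0)\,S$ as $x_0\to0$ along a subsequence, and this linear limit must solve the invariant model exactly, forcing $r_1a-a^2=0$; since the slope is bounded away from $0$ by the first step, $a=r_1$. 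The hard part will be the rescaling step: verifying that the parabolic scaling keeps the full equation \eqref{eqforvarphi} uniformly elliptic with $x_0$--independent constants while simultaneously demoting every $O_i$ term and the quadratic $\varphi_x^2$ term to genuinely lower order, and doing so uniformly as $y_0$ ranges across the reflected segment so that the estimate persists up to and through $P_1$.
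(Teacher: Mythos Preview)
Your overall strategy---anisotropic parabolic rescaling to reduce the degenerate equation to a uniformly elliptic one on a fixed reference rectangle---is the same as the paper's, and your first step (the lower bound $\varphi\ge \mu r_1 x$ via barriers) matches the paper's Step~1. The gap is in the rescaling step itself.

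You rescale $\varphi$ directly by the factor $1/x_0$, so that $\varphi^{(z_0)}(S,T)=x_0^{-1}\varphi(x_0S,y_0+\sqrt{x_0}\,T)$. A uniform $C^{2,\alpha}$ bound on $\varphi^{(z_0)}$ then scales back to $|\varphi_x|\le C$ and $|\varphi_y|\le C\sqrt{x}$, \emph{not} to $|\varphi_x-r_1|\le Cx^{\alpha}$ as you assert. The blow-up argument you add at the end identifies the limiting slope as $r_1$, but only along subsequences and without a rate; so the claimed $C^{1,\alpha}$ regularity up to $\{x=0\}$ does not follow from your scaling as written. (Your first-step barriers $r_1x\pm Cx^{1+\sigma}$ would give the needed rate if they actually worked as barriers, but that is essentially the conclusion you are trying to prove, and you yourself only extract $c_1x\le\varphi\le c_2x$ from them.)

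The paper fixes exactly this by \emph{subtracting the expected linear profile first}: it sets $W:=r_1x-\varphi$, derives the equation for $W$, and then rescales $W$ by $x_0^{-(1+\alpha)}$ via
\[
W^{(z_0)}(S,T)=\frac{1}{x_0^{1+\alpha}}\,W\Big(x_0+\tfrac{x_0}{8}S,\ y_0+\tfrac{\sqrt{x_0}}{8}T\Big).
\]
Uniform ellipticity of the rescaled equation and the Schauder-type estimate $\|W^{(z_0)}\|_{C^{2,\alpha}(\overline{Q_{1/2}})}\le C$ then translate \emph{directly} into $|\varphi-r_1x|\le Cx^{1+\alpha}$ and $|D_x^iD_y^jW|\le Cx^{1+\alpha-i-j/2}$, from which $DW(0,y)=0$, i.e.\ $\varphi_x(0,y)=r_1$ and $\varphi_y(0,y)=0$, is immediate. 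To repair your argument you should either adopt this subtraction-and-stronger-rescaling, or replace your single blow-up by an iterated improvement-of-flatness scheme that upgrades the exponent at each dyadic scale.
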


\begin{proof}
The proof is similar to that in Bae-Chen-Feldman \cite{bae2009regularity}, and we only list the major procedure and
the points of difference here.

\smallskip
1. By constructing barrier functions and the maximum principle for strictly elliptic equations,
we can prove that
$\varphi=p_1-p$ has a positive lower bound, {\it i.e.},
there exist $\widehat{r}>0$ and $\mu>0$, depending on the Riemann initial data and
$\inf_{Q_{\widehat{r},R}^{+}\cap\{x>\frac{\widehat{r}}{2}\}}\varphi$,
such that, for all $r\in(0,\frac{\widehat{r}}{2}]$,
$$
\varphi\geq \mu r_1x\qquad\,\, \text{in $Q_{r,\frac{15R}{16}}^{+}$}.
$$

2. We can now obtain more precise estimates for $\varphi$ near the sonic boundary:
$$
|\varphi(x,y)-r_1x|\leq Cx^{1+\alpha}\qquad \text{in $Q^{+}_{\widehat{r},\frac{7R}{8}}$}
$$
for any $\alpha\in (0,1)$ and some constant $C$ depending on $\widehat{r}, R, \alpha$, and the Riemann initial data.

To achieve this, we denote $W(x,y):=r_1x-\varphi(x,y)$ and introduce a cutoff function $\zeta(s)\in C^{\infty}$ such that
\begin{equation}\label{eqhighorder-a}
\zeta(s)=\begin{cases}
s \quad& \text{for}\,\,s\in (-r_1,r_1),\\[2pt]
0 \quad&\text{for}\,\,s\in \mathbb{R}\backslash (-r_1-1,r_1+1).
\end{cases}
\end{equation}
Then $W(x,y)$ satisfies the following equation:
\begin{align}\label{eqW}
&x\big(r_{1}+\zeta(\frac{W}{x})+\frac{O_{1}}{x}\big)W_{xx}+\big(1+O_{2}\big)W_{yy}-
\big(r_{1}-O_{3}+2r_{1}O_{4}\big)W_{x}+(1+O_{4})W_{x}^{2}\nonumber\\
&=r_{1}O_{3}-r_{1}^{2}O_{4},
\end{align}
where $O_i$, $i=1,\cdots,4$, are given in \eqref{eqhighorder}.

\smallskip
Next, for fixed $z_0=(x_0,y_0)\in Q_{r/2,R/2}^{+}$, we define
$$
W^{(z_0)}(S,T)=\frac{1}{x_{0}^{1+\alpha}}W(x_0+\frac{x_0}{8}S,y_0+\frac{\sqrt{x_0}}{8}T)\qquad \mbox{for $(S,T)\in Q_{1}$},
$$
where $Q_{1}=(-1,1)^2$.
By estimating the coefficients carefully, we can show that equation \eqref{eqW} is uniformly elliptic with
ellipticity constants independent of $z_0$. Then, by Theorem A.1 in Chen-Feldman \cite{chen2010global},
we can derive
$$
\|W^{(z_0)}\|_{C^{2,\alpha}(\overline{Q_{1/2}})}
\leq C \big(r_1 r^{-\alpha}+r^{1-\alpha}\big)=:\widehat{C},
$$
where $C$ depends only on the data and $\alpha$. Thus, we have
$$
|D_{x}^{i}D_{y}^{j}W(x_0,y_0)|\leq Cx_{0}^{2+\alpha-i-j/2} \qquad \text{for all}\,\,(x_0,y_0)\in Q_{r/2,R/2}^{+}, \,
0\leq i+j\leq 1,
$$
which implies that $DW(0,y)=0$. This completes the proof.
\end{proof}

The following lemma states the regularity of solutions near the interior of the sonic boundary.

\begin{lemma}
Let $p\in C^{2,\alpha}(\Omega)\cap C(\overline{\Omega})$ be a solution of Problem {\rm \ref{problem}}
satisfy that
$$
p_{2}<p<p_{1}, \quad r^{2}<p \qquad\,\,\mbox{in $\Omega$}.
$$
Then $p$ cannot be $C^1$ across the degenerate sonic boundary $\Gamma_{\rm sonic}$.
\end{lemma}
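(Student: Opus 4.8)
The plan is to read off the optimal (non-$C^1$) behaviour directly from the one-sided normal derivative computed in the preceding lemma, and to contrast it with the exterior constant state. The starting point is Lemma~\ref{lem5-2}: in the flattening coordinates $(x,y)=(r_1-r,\theta-\theta_1)$ the function $\varphi=p_1-p$ belongs to $C^{1,\alpha}(\overline{Q^+_{r/2,R/2}})$ and satisfies \eqref{eq4.9}, namely $\varphi_x(0,y)=r_1$ and $\varphi_y(0,y)=0$ for $|y|<\frac{R}{2}$. Since $x=r_1-r$ gives $\partial_x=-\partial_r$ and $\varphi=p_1-p$, this means that the one-sided radial derivative of $p$ from inside the subsonic region exists along the interior of $\Gamma_{\rm sonic}$ and equals
\[
\partial_r p\big|_{\Gamma_{\rm sonic}^{-}}=\varphi_x(0,y)=r_1>0,
\]
while the tangential derivative vanishes there.

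Next I would bring in the exterior. By the reformulation of the Riemann problem, across $\Gamma_{\rm sonic}$ the global solution is glued to the constant state $(1)$, so on the supersonic side of $\Gamma_{\rm sonic}$ one has $p\equiv p_1$ and hence $\nabla p\equiv 0$; in particular the one-sided radial derivative from outside is $\partial_r p|_{\Gamma_{\rm sonic}^{+}}=0$. The argument is then completed by contradiction: if $p$ were $C^1$ across some interior point of $\Gamma_{\rm sonic}$, its gradient would be continuous there, forcing the two one-sided radial derivatives to agree. But they differ by the definite amount $r_1=\sqrt{p_1}\neq 0$, so no such $C^1$ matching is possible; the normal derivative of $p$ jumps by $r_1$ across $\Gamma_{\rm sonic}$, and therefore $p$ cannot be $C^1$ across the degenerate sonic boundary.

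Since the earlier Lipschitz lemma already gives $|p-p_1|\le 4\sqrt{p_1}\,\big|\sqrt{p_1}-\sqrt{\xi^2+\eta^2}\big|$ up to $\Gamma_{\rm sonic}$, this shows that $C^{0,1}$ is exactly the regularity attained, establishing the optimality claimed in Theorem~\ref{thm1}(iv). The only genuine difficulty in this chain is the quantitative, \emph{nondegenerate} value $r_1$ of the interior normal derivative: a mere bound on $\varphi_x$ would not suffice, as one needs the exact limit $\varphi_x(0,y)=r_1$. That precise value is the content of Lemma~\ref{lem5-2}, obtained through the degenerate-ellipticity scaling and barrier estimates near the sonic line. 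Once it is in hand, the present statement follows as a direct corollary, the remaining care being only the sign bookkeeping in the change of coordinates and the fact that the exterior state is constant.
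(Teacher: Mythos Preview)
Your argument is correct and follows essentially the same route as the paper: both show that the interior one-sided normal derivative of $p$ on $\Gamma_{\rm sonic}$ is strictly positive, which is incompatible with the vanishing gradient inherited from the exterior constant state $p\equiv p_1$. The only cosmetic difference is that you invoke the sharp value $\varphi_x(0,y)=r_1$ from the conclusion of Lemma~\ref{lem5-2}, whereas the paper's proof quotes only the intermediate lower bound $\varphi\ge \mu r_1 x$ obtained in Step~1 of that lemma's proof; either suffices for the contradiction.
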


\begin{proof}
Suppose that $p$ is $C^{1}$ across $\Gamma_{\rm sonic}$, so is $\varphi=p_{1}-p$.
Since $\varphi\equiv0$ in the supersonic domain $(\{\xi\leq\xi_{1}\}\backslash\Omega)\cap \overline{\Gamma_{\rm sonic}}$,
it follows that
$$
D\varphi(0,y)=0 \qquad\,\, \text{for any}\,\, (0,y)\in\Gamma_{\rm sonic}.
$$
On the other hand, for $(0,y_0)\in \Gamma_{\rm sonic}$ and small $r,R>0$,
$$
\hat{Q}_{r,R}^{+}:=\{(x,y)\,:\, x\in (0,r),\,|y-y_0|<R\}\subset \Omega_{\varepsilon}.
$$
Since \eqref{eqforvarphi} is invariant under the transformation
$(x,y)\rightarrow (x,y-y_0)$,
we can let $(0,y_0)=(0,0)$ so that $Q_{r,R}^{+}\subset \Omega_{\varepsilon}$.
From the proof of Lemma \ref{lem5-2}, there exist $r,\mu>0$ such that
$$
\varphi \geq \mu r_1x\qquad \,\,\text{in $Q_{r,15R/16}^{+}$},
$$
which contradicts to $D\varphi(0,y)=0$.
\end{proof}

For the regularity near the interaction points $P_1$ and $P_3$, we have the following result.

\begin{lemma}\label{theregularity}
Let $p$ be the solution of Problem {\rm \ref{problem}} and satisfy the properties that
there exists a neighborhood $\mathcal{N}(\Gamma_{\rm sonic})$
such that, for $\varphi=p_1-p$,

\begin{enumerate}
\item[\rm (i)] $\varphi$ is $C^{0,1}$ across the degenerate boundary $\Gamma_{\rm sonic}${\rm ;}

\item[\rm (ii)] There exists $\mu_0>0$ such that, in the $(x,y)$--coordinates,
$$
0\leq \varphi \leq (2r_1-\mu_0)x \qquad \text{in $\Omega\cap \mathcal{N}(\Gamma_{\rm sonic})$};
$$

\item[\rm (iii)]  There exist $\epsilon_0>0$, $\omega>0$, and $y=f(x)\in C^{1,1}([0,\epsilon_0])$ such that
\begin{align*}
&\Gamma_{\rm shock}\cap\partial\Omega_{\epsilon_0}=\{(x,y):x\in (0,\epsilon_0),y=f(x)\},\\[1mm]
&\Omega_{\epsilon_0}=\big\{(x,y):x\in (0,\epsilon_0), f(x)<y<\frac{\pi}{2}-\theta_1\big\},\\[1mm]
&\frac{\partial f}{\partial x}\geq \omega>0  \qquad\,\mbox{for $x\in (0,\epsilon_0)$}.
\end{align*}
\end{enumerate}
Then
both limits, $\lim\limits_{(\xi,\eta)\rightarrow P_1\atop (\xi,\eta)\in\Omega}D_{r}\varphi$
and $\lim\limits_{(\xi,\eta)\rightarrow P_3\atop (\xi,\eta)\in\Omega}D_{r}\varphi$, do not exist.
\end{lemma}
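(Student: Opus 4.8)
The plan is to argue by contradiction: assume that $\displaystyle\lim_{(\xi,\eta)\to P_1,\,(\xi,\eta)\in\Omega} D_r\varphi$ exists and equals some $L$, and then exhibit a conflict between the two boundaries meeting at $P_1$. First I would evaluate the limit along the sonic arc. Applying Lemma \ref{lem5-2} to interior points of $\Gamma_{\rm sonic}$ and letting them tend to $P_1$ along $x=0$, we have $\varphi_x(0,y)=r_1$ and $\varphi_y(0,y)=0$, so that $D_r\varphi=-\varphi_x\to -r_1$ and $D_\theta\varphi=\varphi_y\to 0$ along this approach. Hence necessarily $L=-r_1$, and the candidate boundary value of the full gradient from the sonic side is $(\varphi_r,\varphi_\theta)=(-r_1,0)$.

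Next I would extract the constraint coming from the shock. On $\Gamma_{\rm shock}$ the oblique relation $\beta_1\varphi_r+\beta_2\varphi_\theta=0$ holds, and I would evaluate $(\beta_1,\beta_2)$ at $P_1$ from \eqref{eqrh2} using $r^2=p=p_1$, $[p]=p_1-p_2$, $\overline{p}=\tfrac{p_1+p_2}{2}$, and $r'(\theta_1)>0$ from (R3). A direct computation (the last term in $\beta_1$ vanishes since $r^2=p$ at $P_1$) gives
\[
\beta_1(P_1)=r'(\theta_1)\,\frac{(p_1-p_2)\,p_2}{p_1(p_1+p_2)}\neq 0,\qquad \beta_2(P_1)=\frac{(p_1-p_2)(p_1+2p_2)}{p_1(p_1+p_2)}\neq 0,
\]
so the oblique covector is transversal to the radial direction: $(\beta_1,\beta_2)\cdot(1,0)=\beta_1\neq0$. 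Consequently the sonic-side gradient $(-r_1,0)$ cannot satisfy the shock relation, since $\beta_1(-r_1)+\beta_2\cdot 0=-\beta_1 r_1\neq 0$. This already shows that $\nabla\varphi$ is discontinuous at $P_1$.

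To upgrade discontinuity of the gradient to the non-existence of $\lim D_r\varphi$ itself, I would combine the oblique relation, which on $\Gamma_{\rm shock}$ forces $\varphi_\theta=-\tfrac{\beta_1}{\beta_2}\varphi_r$, with the sharp sonic bounds $\mu r_1 x\le \varphi\le (2r_1-\mu_0)x$ from (ii) and \eqref{eq3-19} and with Lemma \ref{lem5-2}, in order to compute the shock-side limit of the gradient. If $D_r\varphi\to -r_1$ held along $\Gamma_{\rm shock}$ as well, the oblique relation would yield $\varphi_\theta\to \tfrac{\beta_1(P_1)}{\beta_2(P_1)}\,r_1\neq0$ there, which would make $\varphi$ increase along $\Gamma_{\rm shock}$ at a rate strictly exceeding $r_1$; tracking this growth against the rate dictated by the degenerate equation \eqref{eqforvarphi2} near the corner — via the parabolic rescaling in the $(x,y)$--chart of \S\ref{sec4}, in the spirit of Bae--Chen--Feldman \cite{bae2009regularity} and Chen--Feldman \cite{chen2010global,chen2018mathematics} — pins the shock-side radial limit at a value different from $-r_1$. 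The two one-sided limits of $D_r\varphi$ then disagree, contradicting the assumed existence of $L$; the argument at $P_3$ is identical by the reflection symmetry with respect to $\xi=0$.

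The hard part will be this last step. Equation \eqref{eqforvarphi2} is degenerate elliptic exactly at $P_1\in\Gamma_{\rm sonic}$, so the boundary-gradient estimates of Lemma \ref{lem5-2} do not extend up to the corner (their parabolic cylinders exit $\Omega$ through the transversal shock), and one cannot simply invoke continuity of $\nabla\varphi$ there. The crux is therefore the corner analysis that determines the shock-side behavior of $D_r\varphi$: one must rescale parabolically in $x$ as in \S\ref{sec4} and compare with explicit barriers, and it is precisely in this rescaled problem that the degenerate ellipticity at the sonic point $P_1$ interacts with the non-degenerate oblique condition on $\Gamma_{\rm shock}$ to prevent the radial derivative from converging.
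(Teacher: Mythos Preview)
Your contradiction strategy and the sonic-side sequence match the paper exactly. The divergence is on the shock side: you assume $\varphi_x\to r_1$ along $\Gamma_{\rm shock}$, deduce $\varphi_\theta\not\to 0$ from the oblique relation, and then propose to rule this out by a fresh parabolic-rescaling/barrier argument at the corner, which you flag as the ``hard part'' and do not carry out. The paper's route is much shorter and reverses the order of implications. The rescaled estimates in the proof of Lemma~\ref{lem5-2} (namely $|D_yW|\le Cx_0^{3/2+\alpha}$, i.e.\ $|\varphi_y|\le Cx^{3/2+\alpha}$) already yield $\varphi_y(x,f(x))\to 0$ as $x\to 0$; since $\hat\beta_1>\lambda>0$ near $P_1$, the oblique relation $\hat\beta_1\varphi_x+\hat\beta_2\varphi_y=0$ then forces $|\varphi_x(x,f(x))|\le K|\varphi_y(x,f(x))|\to 0$ directly. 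To manufacture an \emph{interior} sequence with the same limit, the paper sets $H(x)=\varphi(x,f(x)+\tfrac{\omega}{2}x)$, extracts $x_k\to 0$ with $H'(x_k)\to 0$, and uses $H'=\varphi_x+\varphi_y\,g'$ together with $|g'|\le K$ and $\varphi_y\to 0$ to conclude $\varphi_x(x_k,g(x_k))\to 0$. The two sequences then give $\varphi_x\to r_1$ and $\varphi_x\to 0$, finishing the contradiction.

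So your explicit evaluation of $(\beta_1,\beta_2)(P_1)$ is correct but more than what is needed (only $\hat\beta_1\neq 0$ matters), and the ``hard part'' you identify is both incomplete and unnecessary: the tangential decay $\varphi_y\to 0$ is already a byproduct of Lemma~\ref{lem5-2}, and once you use it the shock-side radial limit is $0$ without any new corner analysis. The gap in your proposal is precisely this missed shortcut; your route could in principle be pushed through, but it amounts to re-proving in barrier form what the parabolic rescaling of Lemma~\ref{lem5-2} has already delivered.
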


\begin{proof}
We prove this assertion by contradiction as in Bae-Chen-Feldman \cite{bae2009regularity}.

We choose two different sequences of points converging to $P_1$ and show that
the two limits $\varphi_{x}$ along the two sequences are different, which reaches to a contradiction.

\smallskip
We first take a sequence close to $\Gamma_{\rm sonic}$.
Let $\{y_{m}\}_{m=1}^{\infty}$ be a sequence such that
$y_{m}\in (0,\frac{\pi}{2}-\theta_1)$ and $\lim\limits_{m\rightarrow \infty}y_{m}=0$.
By \eqref{eq4.9}, there exists
$x_{m}\in (0,\frac{1}{m})$ such that
$$
|\varphi_{x}(x_m,y_m)-r_1|<\frac{1}{m}.
$$
Then we see that $(x_m,y_m)\in \Omega$, $\lim\limits_{m\rightarrow\infty}(x_m,y_m)=0$,
\begin{equation}
\label{eqlimit}
\lim\limits_{m\rightarrow \infty}\varphi_{x}(x_m,y_m)=r_1, \qquad \lim\limits_{m\rightarrow \infty}\varphi_{y}(x_m,y_m)=0.
\end{equation}

\smallskip
We now construct the second sequence close to $\Gamma_{\rm shock}$. Suppose that the limit,
$\lim\limits_{(\xi,\eta)\rightarrow P_1\atop (\xi,\eta)\in\Omega}D_{r}\varphi$, exists.
Then
$$
\lim\limits_{x\rightarrow 0}\varphi(x,f(x))=\varphi(0,0)=0.
$$
From Lemma \ref{lem5-2}, it follows that
$$
\lim\limits_{x\rightarrow 0}\varphi_{y}(x,f(x))=0.
$$
We rewrite the boundary conditions \eqref{eqrh}--\eqref{eqrh2} for $\varphi$ in the $(x,y)$--coordinates as
$$
\widehat{\beta}_{1}\varphi_{x}+\widehat{\beta}_{2}\varphi_{y}=0.
$$
It is direct to see that there exists $\lambda>0$ such that
$\widehat{\beta}_{1}>\lambda$ and $|\widehat{\beta}_{2}|\leq \frac{1}{\lambda}$
on $\Gamma_{\rm shock}\cap\partial\Omega_{\epsilon}$.
Then
$$
|\varphi_{x}(x,f(x))|\leq K|\varphi_{y}(x,f(x))|\qquad\, \text{for some $K>0$},
$$
which implies that $\lim_{x\rightarrow 0}\varphi_{x}(x,f(x))=0$.

\smallskip
Denote $H(x):=\varphi(x,f(x)+\frac{\omega}{2}x)$. Then there exists $\{x_k\}_{k=1}^{\infty}$ with $x_{k}\in (0,\epsilon_0)$
such that
$$
\lim\limits_{k\rightarrow\infty}x_{k}=\lim\limits_{k\rightarrow\infty}H^{\prime}(x_{k})=0.
$$
Moreover, since
$$
H^{\prime}(x)=\varphi_{x}(x,g(x))+\varphi_{y}(x,g(x))g^{\prime}(x),
$$
with $g(x)=f(x)+\frac{\omega}{2}x$ and $|g^{\prime}|\leq K$,
it follows that, for $(x_k,g(x_k))\in \Omega_{\epsilon}$,
\begin{equation} \label{eqlimit2}
\lim\limits_{k\rightarrow\infty}\varphi_{x}(x_k,g(x_k))=0.
\end{equation}
It yields that, along sequence  $(x_k,g(x_k))$, the
limit of $\varphi_{x}(x,y)$ is $0$.
Combining with \eqref{eqlimit},
we conclude that $\varphi_{x}(x,y)$ does not have a limit at $P_1$ from $\Omega$.
Similarly, we can prove that, as a sequence $\{(\xi_i,\eta_i)\}_{i=1}^{\infty}\subset\Omega$ tends to $P_3$,
the limit of $D_{r}\varphi$ does not exist. This completes the proof.
\end{proof}

\section{Existence and Regularity of Global Solutions of the Pressure Gradient System}
\label{sec6}

In Theorem \ref{thm1}, we have constructed a global solution $p$ of the second order equation \eqref{eqforp} in $\Omega$, which is piecewise
constant in the supersonic region.
Moreover, we have proved that $p$ is Lipschitz continuous across the degenerate sonic boundary $\Gamma_{\rm sonic}$ from $\Omega$ to the supersonic region.

To recover the velocity components $u$ and $v$, we consider the first two equations in \eqref{pgs2}.
We can write these equations in the radial variable $r$ as
$$
\frac{\partial u}{\partial r}=\frac{1}{r}p_{\xi},\qquad  \frac{\partial v}{\partial r}
=\frac{1}{r}p_{\eta},
$$
and integrate from the boundary of the subsonic region toward the origin.
It is direct to see that $(u,v)$ are at least Lipschitz continuous across $\Gamma_{\rm sonic}$.
Furthermore, $(u,v)$ have the same regularity as $p$ inside $\Omega$ except origin $r=0$.
However, $(u,v)$ may be multi-valued at origin $r=0$.

In conclusion, we have
\begin{theorem}\label{thm2}
Let the Riemann initial data satisfy \eqref{initialdate1}.
Then there exists a global solution $(u,v,p)(r,\theta)$ with the free boundary
$r=r(\theta), \theta\in [\theta_{3},\theta_{1}]$, such that
\begin{equation*}
(u,v,p)\in C^{2,\alpha}(\Omega), \quad p\in C^{\alpha}(\overline{\Omega}), \quad
r\in C^{2,\alpha}((\theta_{3},\theta_{1}))\cap C^{1,1}([\theta_{3},\theta_{1}]),
\end{equation*}
and $(u,v,p)$ are piecewise constant in the supersonic region.
Moreover, the global solution $(u,v,p)$ with the free boundary $r=r(\theta)$
satisfies the following properties{\rm :}

\smallskip
\emph{(i)} $p>p_2$ on $\Gamma_{\rm shock}$; that is, shock $\Gamma_{\rm shock}$ does not meet the sonic circle of state $p_2${\rm ;}

\smallskip
\emph{(ii)} The shock, $\Gamma_{\rm shock}$, is strictly convex in the self-similar coordinates{\rm ;}

\smallskip
\emph{(iii)} The solution, $(u,v,p)$, is $C^{0,\alpha}$ up to the sonic boundary $\Gamma_{\rm sonic}$ and Lipschitz continuous across $\Gamma_{\rm sonic}${\rm ;}

\emph{(iv)} The Lipschitz regularity of both solution $(u,v,p)$ across $\Gamma_{\rm sonic}$ from the subsonic region $\Omega$
and shock $\Gamma_{\rm shock}$ across points $\{P_1, P_3\}$
is optimal.
\end{theorem}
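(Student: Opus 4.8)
The plan is to treat Theorem~\ref{thm1}, together with the existence result of \S\ref{sec3} and the optimal-regularity analysis of \S\ref{sec4}, as already supplying the pressure field $p(r,\theta)$ and the free boundary $r(\theta)$ with all of the stated regularity and with properties (i)--(iv) for $p$. What remains for Theorem~\ref{thm2} is to reconstruct the velocity $(u,v)$ from $p$, transfer the regularity to $(u,v)$, reinstate the two vortex sheets that were ignored in the reformulation, and then read off (i)--(iv) for the full state $(u,v,p)$.

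First I would recover the velocity from the first two equations of \eqref{pgs2}. Applying the product rule and using $r\partial_r=\xi\partial_\xi+\eta\partial_\eta$, the two copies of $u$ produced by differentiating $(\xi u)_\xi+(\eta u)_\eta$ combine with the $-2u$ term to cancel (and likewise for $v$), leaving the radial ODEs
\begin{equation*}
\frac{\partial u}{\partial r}=\frac{1}{r}\,p_\xi,\qquad \frac{\partial v}{\partial r}=\frac{1}{r}\,p_\eta,
\end{equation*}
which I would integrate along each ray $\theta=\mathrm{const}$ from the sonic boundary (where $(u,v)$ coincide with the known constant states of the supersonic region) inward toward the origin. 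Since $p\in C^{2,\alpha}(\Omega)$, the right-hand sides lie in $C^{1,\alpha}_{\rm loc}(\Omega)$ and $r$ is bounded below on compact subsets away from the origin, so the integration yields $(u,v)\in C^{2,\alpha}(\Omega)$, matching the interior regularity of $p$. Near $\Gamma_{\rm sonic}$, where $r$ is bounded away from zero and $Dp$ is controlled by the Lipschitz bound on $p$, the same ODEs give that $(u,v)$ are Lipschitz across $\Gamma_{\rm sonic}$, yielding the velocity part of (iii); the factor $1/r$ is precisely what may force $(u,v)$ to be multivalued at $r=0$, which is why the origin is excluded.

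With $(u,v,p)$ in hand I would then transfer the structural properties. Property (i) is Proposition~\ref{le4.1}; property (ii) is the strict convexity of $\Gamma_{\rm shock}$ established in \S\ref{sec3}; the $C^{0,\alpha}$ bound up to $\Gamma_{\rm sonic}$ comes from Theorem~\ref{thm1}, while the optimality asserted in (iv), both for the velocity across $\Gamma_{\rm sonic}$ and for the shock at the corners $P_1,P_3$, is inherited from Lemma~\ref{theregularity} together with the fact that $(u,v)$ carries the regularity of $p$ through the radial ODEs. Reinstating the two vortex sheets $J_{23}^{+}$ and $J_{34}^{-}$ is consistent because $[p]=0$ across each of them by \eqref{shock12}, so $p$ is untouched and the sheets appear only as tangential jumps in $(u,v)$; I would verify that the recovered velocity satisfies the linear-degeneracy relations \eqref{shock12} on $J_{23}^{+}$ and $J_{34}^{-}$, and that, where a vortex sheet meets $\Gamma_{\rm shock}$, the interaction is trivial exactly as in the $\alpha_1=0$ discussion following Lemma~\ref{le2.1}.

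I expect the main obstacle to be the global consistency of the reconstructed velocity field together with its behavior at the origin: the $1/r$ singularity in the radial ODEs requires a careful treatment of $(u,v)$ near $r=0$, where multivaluedness is unavoidable, and one must confirm that the piecewise-constant exterior data, the two planar shocks and their diffracted continuation, and the two contact discontinuities all patch together into a single global entropy solution of \eqref{pgs}. Everything else reduces to repackaging the estimates for $p$ and $r(\theta)$ already established in \S\ref{sec3}--\S\ref{sec4}.
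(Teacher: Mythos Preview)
Your proposal is correct and follows essentially the same route as the paper: invoke Theorem~\ref{thm1} for $p$ and the free boundary, rewrite the first two equations of \eqref{pgs2} as the radial ODEs $\partial_r u=\frac{1}{r}p_\xi$, $\partial_r v=\frac{1}{r}p_\eta$, integrate inward from $\Gamma_{\rm sonic}$ using the known constant states as initial data, and read off that $(u,v)$ inherits the interior $C^{2,\alpha}$ regularity of $p$ and the Lipschitz continuity across $\Gamma_{\rm sonic}$, with the origin $r=0$ the only obstruction (possible multivaluedness). Your additional remarks on reinstating the vortex sheets and on the corner optimality via Lemma~\ref{theregularity} are consistent elaborations of what the paper states more tersely.
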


\bigskip
\medskip
\noindent
{\bf Acknowledgements}.
Gui-Qiang G. Chen's research was supported in part by the UK Engineering and Physical Sciences Research Council
under Grant EP/L015811/1 and the Royal Society--Wolfson Research Merit Award WM090014 (UK).
Qin Wang's research was supported in part by National Natural Science Foundation of China (11761077),
China Scholarship Council (201807035046), and the Key Project of Yunnan Provincial Science and Technology
Department and Yunnan University (No.2018FY001(-014)).
Shengguo Zhu's research was supported in part by the Royal Society--Newton International Fellowships NF170015,
and  Monash University--Robert Bartnik Visiting Fellowship.
Qin Wang would also like to thank the hospitality and support
of Mathematical Institute of University of Oxford during his visit in 2019--20.

\bigskip


\begin{thebibliography}{10}

\bibitem{agarwal1994modified}
R.~Agarwal and D.~Halt.
\newblock A modified CUSP scheme in wave/particle split form for unstructured
  grid Euler flows.
\newblock In: {\em Frontiers of Computational Fluid Dynamics}, Eds. D.~A. Caughey and M.~M. Hafez,
pp. 155--163, 1994.

\bibitem{bae2009regularity}
M.~Bae, G.-Q. Chen, and M.~Feldman.
\newblock Regularity of solutions to regular shock reflection for potential
  flow.
\newblock {\em Invent. Math.} {\bf 175(3)}:505--543, 2009.

\bibitem{bae2019prandtl}
M.~Bae, G.-Q. Chen, and M.~Feldman.
\newblock {\em Prandtl-Meyer Reflection Configurations, Transonic Shocks, and Free
Boundary Problems}.  Research Monograph, 224 pages,\,
Memoirs of the American Mathematical Society, AMS: Providence, 2020 (to appear).
{\em arXiv Preprint}, arXiv:1901.05916.

\bibitem{vcanic2000free}
S.~Canic, B.~L. Keyfitz, and E.~H. Kim.
\newblock Free boundary problems for the unsteady transonic small disturbance
  equation: Transonic regular reflection.
\newblock {\em Methods  Appl. Anal.} {\bf 7(2)}:313--336, 2000.

\bibitem{canic2001free}
S.~Canic, B.~L. Keyfitz, and E.~H. Kim.
\newblock A free boundary problem for a quasi-linear degenerate elliptic
  equation: regular reflection of weak shocks.
\newblock {\em Commun. Pure Appl. Math.} {\bf 55(1)}:71--92,
  2002.

\bibitem{canic2006free}
S.~Canic, B.~L. Keyfitz, and E.~H. Kim.
\newblock Free boundary problems for nonlinear wave systems: Mach stems for
  interacting shocks.
\newblock {\em SIAM J. Math. Anal.} {\bf 37(6)}:1947--1977, 2006.

\bibitem{ChangChenYang}
T.~Chang, G.-Q. Chen, and S.-L. Yang.
On the $2$-D Riemann problem for the compressible Euler equations. I. Interaction of shocks and rarefaction waves.
\newblock{\em Discrete Contin. Dynam. Systems}, {\bf 1}: 555--584, 1995.
II. Interaction of contact discontinuities.
\newblock{\em Discrete Contin. Dynam. Systems}, {\bf 6}: 419--430, 2000.

\bibitem{chang1989riemann}
T.~Chang and L.~Hsiao.
\newblock {\em The Riemann Problem and Interaction of Waves in Gas Dynamics},
Longman Scientific \& Technical: Harlow; John Wiley \& Sons, Inc.: New York, 1989.


\bibitem{chen2014shock}
G.-Q. Chen, X.~Deng, and W.~Xiang.
\newblock Shock diffraction by convex cornered wedges for the nonlinear wave
  system.
\newblock {\em Arch. Ration. Mech. Anal.} {\bf 211(1)}:61--112,
  2014.

\bibitem{chen2010global}
G.-Q. Chen and M.~Feldman.
\newblock Global solutions of shock reflection by large-angle wedges for
  potential flow.
\newblock {\em  Ann. of Math.} {\bf (2) 171}:1067--1182, 2010.

\bibitem{chen2018mathematics}
G.-Q. Chen and M.~Feldman.
\newblock {\em The Mathematics of Shock Reflection-Diffraction and Von
  Neumann's Conjectures}, Research Monograpgh,
 Annals of Mathematics Studies, Vol. {\bf 359}.
\newblock Princeton University Press: Princeton, 2018.

\bibitem{ChenLeFloch}
G.-Q. Chen and P. LeFloch.
\newblock Entropy flux-splittings for hyperbolic conservation laws.
\newblock {\em Comm. Pure Appl. Math.} {\bf 48}: 691--729, 1995.

\bibitem{SXChen1992}
S.-X.~Chen.
Multidimensional Riemann problem for semilinear wave equations.
\newblock {\em Comm. Partial Diff. Equ.} {\bf 17}: 715--736, 1992.

\bibitem{SXChen1997}
S.-X.~Chen.
Construction of solutions to M-D Riemann problems for a $2\times 2$ quasilinear hyperbolic system.
\newblock {\em Chinese Ann. Math. Ser. B}, {\bf 18}: 345--358, 1997.


\bibitem{chen2007stability}
S.-X.~Chen and B.~Fang.
\newblock Stability of transonic shocks in supersonic flow past a wedge.
\newblock {\em J. Differ. Equ.} {\bf 233(1)}:105--135, 2007.

\bibitem{Chen-Qu2012}
S.-X. Chen and A. Qu. Two-dimensional Riemann problems for Chaplygin gas.
\newblock {\em SIAM J. Math. Anal.} {\bf 44}: 2146--2178, 2012.


\bibitem{dafermos2000}
C.~M. Dafermos.
\newblock {\em Hyperbolic Conservation laws in Continuum Physics},
\newblock Springer-Verlag: Berlin, 2016.

\bibitem{egorov1969oblique}
J.~V. Egorov and V.~A. Kondrat'ev.
\newblock The oblique derivative problem.
\newblock {\em Mathematics of the USSR-Sbornik}, {\bf 7(1)}:139, 1969.

\bibitem{elling2008supersonic}
V.~Elling and T.-P. Liu.
\newblock Supersonic flow onto a solid wedge.
\newblock {\em Commun. Pure Appl. Math.} {\bf 61(10)}:1347--1448, 2008.


\bibitem{gilbarg2015elliptic}
D.~Gilbarg and N.~S. Trudinger.
\newblock {\em Elliptic Partial Differential Equations of Second Order}.
\newblock Springer-Verlag: Berlin, 2001.

\bibitem{glimm1965}
J.~Glimm.
\newblock  Solutions in the large for nonlinear hyperbolic systems of equations.
\newblock{\em  Comm. Pure Appl. Math.} {\bf 18}:697--715, 1965.

\bibitem{hormander1966pseudo}
L.~Hormander.
\newblock Pseudo-differential operators and non-elliptic boundary problems.
\newblock {\em Ann. of Math.}  {\bf 83}:129--209, 1966.

\bibitem{keyfitz1998riemann}
B.~L. Keyfitz and S.~Canic.
\newblock Riemann problems for the two-dimensional unsteady transonic small
  disturbance equation.
\newblock {\em SIAM J. Appl. Math.} {\bf 58(2)}:636--665, 1998.

\bibitem{kim2010global}
E.~H. Kim.
\newblock A global subsonic solution to an interacting transonic shock for the
  self-similar nonlinear wave equation.
\newblock {\em J. Differ. Equ.} {\bf 248(12)}:2906--2930, 2010.


\bibitem{lax1957hyperbolic}
P.~Lax.
\newblock Hyperbolic systems of conservation laws II.
\newblock {\em Commun. Pure Appl. Math.} {\bf 4(10)}:537--566, 1957.

\bibitem{li1998two}
J.~Li, T.~Zhang, and S.~Yang.
\newblock {\em The Two-Dimensional Riemann Problem in Gas Dynamics}.
Monographs and Surveys in Pure and Applied Mathematics, Vol. {\bf 98},
\newblock Chapman \& Hall/CRC, Longman: Harlow, 1998.

\bibitem{li1985second}
Y.~F. Li and Y.~M. Cao.
\newblock  {\em Large-particle} difference method with second-order accuracy in gasdynamics.
{\em Sci. China}, {\bf 28A}:1024--1035, 1985.


\bibitem{lieberman1985perron}
G.~M. Lieberman.
\newblock The Perron process applied to oblique derivative problems.
\newblock {\em Adv. Math.} {\bf 55(2)}:161--172, 1985.

\bibitem{lieberman1986mixed}
G.~M. Lieberman.
\newblock Mixed boundary value problems for elliptic and parabolic differential
  equations of second order.
\newblock {\em J. Math. Anal. Appl.}
  {\bf 113(2)}:422--440, 1986.

\bibitem{popivanov1997degenerate}
P.~R. Popivanov, and D.~K. Palagachev.
\newblock {\em The Degenerate Oblique Derivative Problem for Elliptic and
  Parabolic Equations}.
\newblock Akademie Verlag: Berlin, 1997.

\bibitem{Riemann}
B. Riemann. \"{U}ber die Fortpflanzung ebener Luftvellen von
endlicher Schwingungsweite. G\"{o}tt. Abh. Math. Cl. {\bf 8}:43--65, 1860.


\bibitem{smoller1994}
J.~Smoller.
\newblock {\em The Shock Waves and Reaction-Diffusion Equations}.
\newblock  2nd edition, Springer-Verlag: New York, 1994.

\bibitem{winzell1981boundary}
B.~Winzell.
\newblock A boundary value problem with an oblique derivative.
\newblock {\em Commun. Partial Differ. Equ.}
  {\bf 6(3)}:305--328, 1981.

\bibitem{yuan2006transonic}
H.~Yuan.
\newblock On transonic shocks in two-dimensional variable-area ducts for steady
  Euler system.
\newblock {\em SIAM J. Math. Anal.} {\bf 38(4)}:1343--1370, 2006.

\bibitem{zhang1998two}
P.~Zhang, J.~Li, and T.~Zhang.
\newblock On two-dimensional Riemann problem for pressure-gradient equations of
  the Euler system.
\newblock {\em Discret. Contin. Dyn. Syst.} {\bf 4}:609--634, 1998.

\bibitem{yuxi1997existence}
Y.~Zheng.
\newblock Existence of solutions to the transonic pressure gradient equations
  of the compressible Euler equations in elliptic regions.
\newblock {\em Commun. Partial Differ. Equ.}
  {\bf 22(11-12)}:1849--1868, 1997.


\bibitem{zheng2003global}
Y.~Zheng.
\newblock A global solution to a two-dimensional Riemann problem involving
  shocks as free boundaries.
\newblock {\em Acta Math. Appl. Sin.} {\bf 19(4)}:559--572, 2003.

\bibitem{zheng2006two}
Y.~Zheng.
\newblock Two-dimensional regular shock reflection for the pressure gradient
  system of conservation laws.
\newblock {\em Acta Math. Appl. Sin.} {\bf 22(2)}:177--210, 2006.

\bibitem{zheng2012systems}
Y.~Zheng.
\newblock {\em Systems of Conservation Laws{\rm :} Two-Dimensional Riemann Problems}.
  vol. {\bf 38},
\newblock Springer Science \& Business Media, 2012.

\end{thebibliography}
\end{document}